\documentclass[a4paper]{amsart}
\usepackage[all]{xy}
\usepackage{amssymb}
\usepackage{amsmath}
\usepackage{bm}
\usepackage[colorlinks,linkcolor=blue]{hyperref}
\usepackage{color,xcolor,dsfont}
\usepackage{enumerate}
\usepackage{epsfig}
\usepackage{extarrows}
\usepackage{framed}
\usepackage{graphicx}
\usepackage{marvosym}
\usepackage{mathrsfs}
\usepackage{setspace} 
\usepackage{tikz}
\usepackage{tikz-cd}

\newcommand{\agit}{\,/\!\!/\,}
	
\newcommand{\git}{\mathbin{
		\mathchoice{\backslash\mkern-6mu\backslash}
		{\backslash\mkern-6mu\backslash}
		{\backslash\mkern-5mu\backslash}
		{\backslash\mkern-5mu\backslash}}}
	
\newcommand{\scha}{\mathcal{A}}
\newcommand{\schb}{\mathcal{B}}
\newcommand{\schc}{\mathcal{C}}
\newcommand{\schd}{\mathcal{D}}
\newcommand{\sche}{\mathcal{E}}

\newcommand{\schh}{\mathcal{H}}
\newcommand{\schk}{\mathcal{K}}
\newcommand{\schm}{\mathcal{M}}
\newcommand{\schp}{\mathcal{P}}
\newcommand{\schq}{\mathcal{Q}}
\newcommand{\schs}{\mathcal{S}}
\newcommand{\scht}{\mathcal{T}}
\newcommand{\schu}{\mathcal{U}}

\newcommand{\shf}{\mathscr{F}}
\newcommand{\shg}{\mathscr{G}}
\newcommand{\shh}{\mathscr{H}}
\newcommand{\shi}{\mathscr{I}}

\newcommand{\shl}{\mathscr{L}}

\newcommand{\sho}{\mathscr{O}}

\newcommand{\shq}{\mathscr{Q}}

\newcommand{\shu}{\mathscr{U}}

\DeclareMathOperator{\deriver}{R}

\DeclareMathOperator{\stable}{st}
\DeclareMathOperator{\modulis}{M}

\newcommand{\HH}{\textup{H}}
\newcommand{\Mukai}{\tilde{\HH}}
\newcommand{\MukaiZ}[1]{\Mukai({#1},\bz)}
\newcommand{\Mst}{\textup{\textsf{M}}}
\newcommand{\Ala}{\textup{\textsf{A}}}
\newcommand{\ExL}{\textup{\textsf{E}}}
\newcommand{\HyU}{\textup{\textsf{U}}}
\DeclareMathOperator{\topo}{top}

\DeclareMathOperator{\OO}{O}
\DeclareMathOperator{\Bl}{Bl}
\DeclareMathOperator{\KK}{K3}
\DeclareMathOperator{\BE}{BE}
\DeclareMathOperator{\id}{id}
\DeclareMathOperator{\NL}{NL}
\DeclareMathOperator{\NS}{NS}
\DeclareMathOperator{\Gr}{Gr}
\DeclareMathOperator{\CGr}{CGr}
\DeclareMathOperator{\Pic}{Pic}
\DeclareMathOperator{\Aut}{Aut}

\DeclareMathOperator{\Hom}{Hom}
\DeclareMathOperator{\Ext}{Ext}

\DeclareMathOperator{\PSL}{PSL}

\DeclareMathOperator{\Stab}{Stab}
\DeclareMathOperator{\rank}{rank}
\DeclareMathOperator{\coho}{coho}
\DeclareMathOperator{\disc}{disc}
\DeclareMathOperator{\Cone}{Cone}

\newcommand{\bz}{\mathbb{Z}}
\newcommand{\bq}{\mathbb{Q}}
\newcommand{\br}{\mathbb{R}}
\newcommand{\bc}{\mathbb{C}}

\newcommand{\bp}{\mathbb{P}}
\newcommand{\bh}{\mathbb{H}}

\newcommand{\Kum}{\textup{\textsf{K}}}
\newcommand{\Num}{\textup{\textsf{N}}}
\newcommand{\HR}{\textup{\textsf{HR}}}
\newcommand{\ch}{\textup{\textsf{ch}}}
\newcommand{\td}{\textup{\textsf{td}}}
\newcommand{\Stwo}{\mathfrak{S}_2}
\newcommand{\DbS}{\cate{D}^b(S)}
\newcommand{\DbX}{\cate{D}^b(X)}

\DeclareMathOperator{\Hilb}{Hilb}
\newcommand{\MBE}{\mathfrak{M}_{\BE}}
\newcommand{\MKK}{\mathfrak{M}_{\KK}}

\newcommand{\GLp}{\operatorname{GL^+}}
\newcommand{\grp}{{\tilde{\GLp}}(2,\br)}

\newcommand{\defi}[1]{{\textbf{\emph{#1}}}}
\newcommand{\cate}[1]{{\textbf{\textup{#1}}}}
\newcommand{\funct}[1]{{\textup{\textbf{\textsf{#1}}}}}

\theoremstyle{plain}
\newtheorem{theorem}{Theorem}[section]
\newtheorem{corollary}[theorem]{Corollary}
\newtheorem{lemma}[theorem]{Lemma}
\newtheorem{proposition}[theorem]{Proposition}

\theoremstyle{definition}
\newtheorem{definition}[theorem]{Definition}
\newtheorem{remark}[theorem]{Remark}
\newtheorem{example}[theorem]{Example}
\newtheorem{conjecture}[theorem]{Conjecture}

\author{Ziqi Liu}
\title{Bridgeland--Enriques general K3 surfaces}
\address{Dipartimento di Matematica “F. Enriques”, Università degli Studi di Milano, Via Cesare Saldini 50, 20133 Milano, Italy.}
\email{ziqi.liu@unimi.it}

\subjclass[2020]{Primary 14J28 14J50 11H56}

\keywords{K3 surfaces, group actions, Gushel--Mukai varieties}

\thanks{The author is a member of the INdAM group GNSAGA (2025-2026) and is partly supported by the ERC Grant 2017-CoG-771507 and FIS 3 Starting Grant No.~FIS-2024-04715.}

\begin{document}

\begin{abstract}
This article introduces a notion of Bridgeland--Enriques general K3 surfaces motivated by the study of Enriques categories over K3 surfaces and the invariant Bridgeland stability conditions. The family of Bridgeland--Enriques general K3 surfaces of degree $10$ detects a categorical degeneration of special Gushel--Mukai threefolds. Also, the families of Bridgeland--Enriques general K3 surfaces with higher degrees are closely related to Hodge-special Gushel--Mukai fourfolds and double EPW sextics.
\end{abstract}

\maketitle

\section{Introduction}

\subsection{Bridgeland--Enriques general K3 surfaces}
To generalize the Enriques--K3 relation for surfaces, the Enriques categories are introduced in \cite{BP23}. One recalls that a \emph{2-Calabi--Yau category} $\schd$ is a triangulated category which has a Serre functor isomorphic to the shift functor $[2]$ on $\schd$. An \emph{Enriques category} $\sche$ is the equivariant category $\schd_{\Stwo}$ for an $\Stwo$-action on a $2$-Calabi--Yau category $\schd$ such that the Serre functor of $\sche$ has the form $\Pi\circ[2]$ where $\Pi$ is a generator of the residual $\mathfrak{S}^\vee_2$-action on $\sche$. The bounded derived category $\DbS$ of a K3 surface $S$ is a $2$-Calabi--Yau category, and $\DbS_{\Stwo}$ is an Enriques category if and only if $\Stwo$ is generated by a non-symplectic involution on $\DbS$ according to \cite{BO23,Liu26a}.

A marking $\MukaiZ{S}\stackrel{\sim}{\rightarrow} \ExL_8(-1)^2\oplus \HyU^4$ for a polarized K3 surface $(S,H)$ of degree $2t$ induces an isomorphism $\HH^0(S,\bz)\oplus\bz[H]\oplus \HH^4(S,\bz)\stackrel{\sim}{\rightarrow}\Mst_{2t}:=\HyU\oplus\bz(2t)$.

\begin{definition}\label{00_main-definition}
Let $\MKK(2t)$ be the moduli space of ample polarized K3 surfaces of degree $2t$, then a polarized K3 surface in $\MKK(2t)$ is said to be \defi{Bridgeland--Enriques general} if it belongs to a subvariety $B\subset\MKK(2t)$ such that
\begin{enumerate}
	\item\label{00_main-definition(1)} the dimension of $B$ is equal to the dimension of $\MKK(2t)$;
	\item\label{00_main-definition(2)} one can find a non-trivial involutive isometry $\tau$ on $\Mst_{2t}$ which extends to an involutive Hodge isometry $\tilde{\tau}$ on $\MukaiZ{S}$ for any polarized K3 surface $(S,H)$ in $B$ and lifts to an $\Stwo$-action on the bounded derived category $\cate{D}^b(S)$ through the isomorphism 
	$$\HH^0(S,\bz)\oplus\bz[H]\oplus \HH^4(S,\bz)\stackrel{\sim}{\rightarrow}\Mst_{2t}$$ 
	such that $\cate{D}^b(S)_{\Stwo}$ is an Enriques category.
\end{enumerate}
The subset $\MBE(2t)$ of all the Bridgeland--Enriques general K3 surfaces of degree $2t$ is called the \defi{Bridgeland--Enriques general locus}.
\end{definition}

Since the subvariety $B\subset\MKK(2t)$ has the maximal possible dimension, a general element $S$ is a K3 surface with Picard rank one. Then the $\Stwo$-action on $\cate{D}^b(S)$ will fix some Bridgeland stability conditions by \cite{FL23}, which contributes to the name of our notion. It is shown in this article that the fixed stability condition is essentially unique for each family $\schs$ using \cite{Liu26b}, and one can use the fixed stability condition to explore the geometry of $\schs$ as one would expect.

One will see from Theorem \ref{02_FL-result} that $\MBE(2t)\neq\varnothing$ if and only if $4$ does not divide $t$ and any odd prime factor of $t$ has the form $4k+1$. 

\begin{example}[Proposition \ref{11_first-example}]\label{00_example-t=1}
One has $\MBE(2)=\MKK(2)$.
\end{example}

In general, the non-empty Bridgeland--Enriques general loci can be described in terms of the \emph{Noether--Lefschetz divisors} in $\MKK(2t)$. One recalls that a polarized K3 surface $(S,H)$ in $\MKK(2t)$ is contained in the Noether--Lefschetz divisor $\NL^{2t}_{n,2d}$ if and only if it has a divisor class $D$ with $D\cdot H=n$ and $D^2=2d$.

\begin{theorem}[Theorem \ref{11_main-result-1}]\label{00_main-result-1}
Consider an integer $t\geq 2$ such that $\MBE(2t)$ is non-empty, then $\MBE(2t)$ is the union of open subsets $\MBE(2t)_a$ indexed by integers $1\leq a\leq t$ with $a^2+1\equiv0\pmod{t}$ and defined by
$$\MBE(2t)_a:=\MKK(2t)-\bigcup_{(n,d)\in I_{t,a}}\NL_{n,2d}^{2t}$$
where $I_{t,a}$ is a finite set of pairs $(n,d)$ of integers satisfying the inequalities
$$4a^2t\geq n^2\geq 4dt$$
and the condition $n/2a\in\bz_+$.
\end{theorem}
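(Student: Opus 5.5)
The plan is to make Definition \ref{00_main-definition} explicit in lattice-theoretic terms, using Theorem \ref{02_FL-result} as a black box for which involutions $\tau$ on $\Mst_{2t}$ occur, and then to determine, for a fixed admissible $\tau$, exactly which polarized K3 surfaces carry the required lift.

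\textbf{Step 1: normal forms.} For a very general $(S,H)\in\MKK(2t)$ the algebraic Mukai lattice is $\Mst_{2t}$ and the transcendental lattice is its orthogonal complement. The extension $\tilde\tau$ must be non-symplectic, i.e. act by $-1$ on transcendental classes. It must also be orientation preserving on the positive $4$-plane, so that derived Torelli applies. Together with the gluing condition along $A_{\Mst_{2t}}\cong\bz/2t$, these force:
\begin{enumerate}
\item $\tau$ acts by $-1$ on $A_{\Mst_{2t}}$;
\item $\tau$ reverses the orientation of the positive $2$-plane of $\Mst_{2t}\otimes\br$.
\end{enumerate}
Writing $\tau$ in the basis $(1,0,0),(0,H,0),(0,0,1)$ and solving these conditions, I expect Theorem \ref{02_FL-result} to yield that, up to the symmetries $\pm1$, shifts and $\otimes\sho(H)$, the admissible $\tau$ are parametrised by $1\le a\le t$ with $a^2+1\equiv 0\pmod t$. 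The anti-invariant line is then spanned by a vector whose $H$-component is governed by $a$ (something like $(a,H,\ast)$ up to scaling). This produces the index set of the union $\MBE(2t)=\bigcup_a\MBE(2t)_a$. Each $\MBE(2t)_a$ will be the maximal-dimensional locus $B$ attached to $\tau_a$, which is why the union is over $a$.

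\textbf{Step 2: extension and lifting for a fixed $a$.} For an arbitrary $(S,H)$, set $\tilde\tau:=\tau_a\oplus(-1)$ on $\Mst_{2t}\oplus\Mst_{2t}^{\perp}$. Because $\tau_a$ acts by $-1$ on the discriminant group, this glues to an involutive Hodge isometry of $\MukaiZ{S}$ for every $S$, so the extension in Definition \ref{00_main-definition}(2) is automatic. By derived Torelli there is an autoequivalence $\Phi$ inducing $\tilde\tau$, and $\Phi^2$ acts trivially on cohomology. The real condition is whether $\Phi$ can be chosen so that $\Phi^2\cong\id$ as an $\Stwo$-action, with equivariant category Enriques.

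Via \cite{FL23} and \cite{Liu26b}, I will use the criterion that this holds iff $\tilde\tau$ fixes a point of (the distinguished component of) $\Stab(S)$. In terms of lattices, this should mean: the $\tilde\tau$-compatible central charges $\Omega$ (those with $\tilde\tau\Omega=\bar\Omega$ up to the $\grp$-action) are not all killed by spherical walls. Equivalently, there is no $(-2)$-class $\delta=(r,D,s)$ in the algebraic Mukai lattice $\HyU\oplus\NS(S)$ lying on every such wall. Concretely, $\delta$ is orthogonal to the $\tau_a$-fixed negative direction together with the constrained positive plane. Any such $\delta$ must involve a class $D\notin\bz H$, since for Picard rank one a fixed stability condition exists. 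Hence the bad locus is a union of Noether--Lefschetz divisors $\NL^{2t}_{n,2d}$ with $n=D\cdot H$ and $2d=D^2$.

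\textbf{Step 3: translating the obstruction into $(n,d)$.} Expand $\delta^2=-2$ and the orthogonality relations in terms of $r,s,n,d$ and $a$. Then:
\begin{itemize}
\item the $H$-component of the $\tau_a$-eigenvector should force $2a\mid n$, giving the condition $n/2a\in\bz_+$;
\item the Hodge index theorem on $\langle H,D\rangle$ gives $n^2\ge 4dt$;
\item requiring that the negative-definite part of the wall condition be solvable should give the bound $n^2\le 4a^2t$.
\end{itemize}
These inequalities bound $n$, and then $d$ from below, so $I_{t,a}$ is finite. Therefore $\MBE(2t)_a$ is a Zariski-open subset, non-empty whenever $a$ exists, and of full dimension.

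The main obstacle is Step 2. One must show precisely that the existence of an Enriques $\Stwo$-lift is equivalent to the absence of such spherical classes. This requires both directions: a fixed stability condition yields the $\Stwo$-action, and conversely, an obstructing spherical class forces $\Phi^2$ to be a non-trivial product of squared spherical twists, so no involutive lift exists. I would first try to reduce to the Bayer--Bridgeland description of the fundamental domain, using \cite{Liu26b} for uniqueness of the fixed stability condition. I would only then carry out the arithmetic of Step 3.
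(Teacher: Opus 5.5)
Your outline has the right skeleton: classify $\tau$ via Theorem \ref{02_FL-result}, extend by $-1$ on $H^{\perp}$, lift via Theorem \ref{02_K3-Hu-conway}, and read off the bad locus from spherical classes. But there is a genuine gap exactly at the step you flag as the main obstacle, and this is where the paper's proof of Theorem \ref{11_main-result-1} does its work.

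A $\tilde\tau$-fixed stability condition $\sigma$ only gives, via Theorem \ref{02_K3-Hu-conway}, an autoequivalence $\Phi$ with $\Phi^2\cong\id$. An involution in this sense need not underlie an $\Stwo$-action. The criterion the paper relies on (Proposition \ref{02_Enriques-group-action-cri}) requires a $\Phi$-fixed object $E$ with $\Hom(E,E)=\bc$. Nothing in your plan produces such an object, and neither the Bayer--Bridgeland fundamental domain nor uniqueness of the fixed stability condition addresses it.

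The missing idea is this. $\Phi$ fixes $\sigma_{t,a}$ and the invariant square-$2$ class $\bar v_1$ of Proposition \ref{11_BE-locus-square-2-vector}, so it acts anti-symplectically on $\modulis_{\sigma_{t,a}}(\bar v_1)$. By Proposition \ref{11_main-result-1-lemma2} this space is one of two things:
\begin{enumerate}
\item a $\KK^{[2]}$-type hyperk\"ahler manifold, where Beauville's theorem gives a fixed point, i.e.\ a fixed stable, hence simple, object;
\item a space with finitely many singular points $A\oplus B$, where $A,B$ are spherical and $\Ext^1(A,B)\cong\bc^3$. If $\Phi$ swaps $A$ and $B$, the local model $\{y(x)=0\}\agit\bc^\times$ carries the involution $(x,y)\mapsto(g^{-1}y,gx)$. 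Its fixed locus is the non-trivial quadric cone $\{g(x)(x)=0\}\subset\bc^3$, so fixed stable points exist nearby.
\end{enumerate}
Without this argument you have not shown that any point of $\MBE(2t)_a$ is Bridgeland--Enriques general.

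There are further errors. Condition (2) of your Step 1 is backwards. The invariant lattice of $\tau$ is the positive-definite $\langle\bar v_1,\bar v_2\rangle\cong\Ala_1^2$. So $\tau$ fixes a positive $2$-plane pointwise and is $-1$ on the negative line, which is spanned by $(t,-aH,t(n-1))$, not by something like $(a,H,\ast)$. Since $-1$ on the transcendental part preserves the orientation of its positive $2$-plane, $\tilde\tau\in\OO^+(\MukaiZ{S})$ forces $\tau$ to \emph{preserve} orientation; with your condition nothing would lift at all.

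Your Step 2 ``iff'' is also unsupported in the other direction. The claim that an obstructing spherical class forces $\Phi^2$ to be a non-trivial product of squared twists is, in Picard rank $\geq 2$, of the same nature as Conjecture \ref{A_conjecture}, and you give no argument for it.

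The paper does not argue through such an equivalence. It works with the locus where the geometric $\sigma_{H/t,-aH/t}$ exists (Proposition \ref{11_main-result-1-lemma1}, via Theorem \ref{02_K3-Bridgeland}). There the obstructions are spherical classes with $r>0$ and $Z_{t,a}(v)\in\br_{\le 0}$, not only those in $\ker Z_{t,a}$, which is all that orthogonality to the fixed plane detects. The condition $\Im Z_{t,a}=0$ forces $n=2ar$, and the real-part inequality combined with Hodge index forces $r^2\le t$; this is exactly $4a^2t\ge n^2\ge 4dt$. Your Step 3 bullets are guesses at this computation. With your obstruction set they would cut out a possibly smaller family of Noether--Lefschetz divisors than the paper's $I_{t,a}$.
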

 
\begin{remark}
A general point $S$ in $\MBE(2t)$ has Picard rank one and is contained in any non-empty open locus $\MBE(2t)_a$. Then one has finitely many distinct involutions on the derived category $\DbS$. One can check that these involutions cover all the conjugacy classes of involutions on $\DbS$ using \cite{FL23,Liu26b}. In particular, we classify all the Enriques categories over general K3 surfaces.
\end{remark}

In fact, the open locus $\MBE(2t)_a$ is exactly the subset of $\MKK(2t)$ where one has a uniformly defined stability condition $\sigma_{t,a}$ on $\DbS$ with central charge
$$Z_{t,a}(r,\Delta,s)=-\left(\frac{a^2-1}{t}r+\frac{a}{t}\Delta\cdot H+s\right)+\left(\frac{2ar+\Delta\cdot H}{t}\right)\bm{i}$$
for the polarized K3 surface $(S,H)$ of degree $2t$. 

Moreover, the open locus $\MBE(2t)_a$ is canonical also in the following sense.

\begin{theorem}[Theorem \ref{11_main-result-1.2}]\label{00_main-result-1.2}
Consider an integer $t\geq 2$ and $1\leq a\leq t$ such that the open locus $\MBE(2t)_a$ is non-empty, then for any family $p\colon\schs\rightarrow B$ of polarized K3 surfaces over a smooth connected variety with fibers in $\MBE(2t)_a$ one can define an autoequivalence $\Pi$ on the category of perfect complexes $\cate{D}_p(\schs)$, such that $\Pi^2\cong p^*\shl$ for some line bundle $\shl$ on $B$ and the base change of $\Pi$ for each point $b\rightarrow B$ induces the $\Stwo$-action on $\cate{D}^b(\schs_b)$ in Definition \ref{00_main-definition} and fixes $\sigma_{t,a}$.
\end{theorem}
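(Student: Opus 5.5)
Fix a polarized K3 surface $(S,H)$ in $\MBE(2t)_a$. The natural candidate for $\Pi$ is a composition of a twist by a (relative) line bundle, a shift, and a spherical twist or a relative Fourier--Mukai transform to a moduli space. On $\Mst_{2t}=\HyU\oplus\bz(2t)$ the isometry $\tau$ attached to $a$ is, up to sign, the reflection in a Mukai vector $v_a$ with $v_a^2=2t$ or $-2$. The central charge $Z_{t,a}$ is invariant because $\tau$ preserves the vector $\Omega_{t,a}$ with $\langle\Omega_{t,a},-\rangle=Z_{t,a}$. I would therefore build $\Pi$ as a composition $\Pi=T_{\mathcal O(H)}^{\epsilon}\circ(-\otimes\sho(kH))\circ[m]$, or as a derived equivalence $\Phi_{\mathcal E}\colon \DbS\to \cate{D}^b(M)$ with $M\cong S$. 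The natural $\Phi_{\mathcal E}$ comes from the moduli space of $\sigma_{t,a}$-stable objects of Mukai vector $(a,H,\ast)$ with $v^2=0$. The condition $a^2+1\equiv0\pmod t$ is what makes $M$ fine and isomorphic to $S$, with $H$ mapping to $H$ via a Hodge isometry.

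First, relative construction. For a family $p\colon\schs\to B$, I would run the argument above in families. Since every fiber lies in $\MBE(2t)_a$, the stability condition $\sigma_{t,a}$ is defined uniformly on every fiber. The relative moduli space $\mathcal M\to B$ of $\sigma_{t,a}$-stable objects of vector $v$ then exists and is smooth and proper, using relative stability conditions in the sense of Bayer--Lahoz--Macrì--Nuer--Perry--Stellari. Fiberwise, the identification $\mathcal M_b\cong\schs_b$ comes from the Hodge isometry $\tilde\tau$ together with the global Torelli theorem. Because the isometry is canonical and $B$ is connected, these fiberwise isomorphisms glue to a $B$-isomorphism $\mathcal M\cong\schs$. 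This step uses that the fibers avoid the Noether--Lefschetz divisors $\NL^{2t}_{n,2d}$ for $(n,d)\in I_{t,a}$: avoiding them rules out walls, so stable objects stay stable, and it excludes extra automorphisms.

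Second, the universal family. A universal family $\mathcal E$ exists only up to a twist by a line bundle from $B$ or a Brauer class. I would argue that the gcd condition coming from $a$ and $t$ kills the Brauer class, so $\mathcal E$ is a genuine sheaf of perfect complexes. Setting $\Pi:=\Phi_{\mathcal E}$, adjusted by a shift, gives an autoequivalence of $\cate{D}_p(\schs)$ that is $B$-linear.

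Third, the relation $\Pi^2\cong p^*\shl$. Fiberwise, $\Pi_b^2$ acts trivially on cohomology and fixes $\sigma_{t,a}$. By \cite{Liu26b}, or by the standard fact that such an autoequivalence of a Picard-rank-one K3 surface is a shift composed with an automorphism, and here it is the identity, $\Pi_b^2$ is isomorphic to the identity functor on each fiber. The kernel of $\Pi^2$ is then a relative kernel that restricts to $\sho_\Delta$ on each fiber. By cohomology and base change it has the form $\sho_\Delta\otimes p^*\shl$ for a line bundle $\shl$ on $B$. Compatibility of base change with the kernels shows that each fiber restriction gives the $\Stwo$-action of Definition~\ref{00_main-definition}, and that this action fixes $\sigma_{t,a}$.

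The main obstacle is the third step, upgrading the fiberwise statement $\Pi_b^2\cong\id$ to a global kernel isomorphism. Fibers of higher Picard rank inside $\MBE(2t)_a$ could carry autoequivalences acting trivially on $\Mst_{2t}$ but nontrivially elsewhere. I would handle these using that $\Pi_b^2$ preserves $\sigma_{t,a}$ and fixes skyscraper-type stable objects, which forces it to be an automorphism. I would then rule out nontrivial automorphisms using $\tilde\tau^2=\id$ on all of $\MukaiZ{S}$.
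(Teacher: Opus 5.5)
Your architecture is the paper's. You build a fine relative moduli space $\schm\to B$ and an isomorphism $\schs\cong\schm$ over $B$ from a uniform Hodge isometry and global Torelli. You take $\Pi$ to be the relative Fourier--Mukai transform of the shifted universal family, which is an equivalence because it is one fiberwise. You get $\Pi^2\cong p^*\shl$ because its kernel is fiberwise $\sho_\Delta$.

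\textbf{First gap: the Mukai vector is wrong.} If $\Pi_b^{\coho}=\tilde\tau$, then $v(\Pi_b(\sho_x))=\tilde\tau(0,0,1)$. Project $(0,0,1)$ onto $\langle v_1,v_2\rangle$ from Proposition \ref{11_involution-description}, using $v_1\perp v_2$, $v_1^2=v_2^2=2(n-1)$ and $v_1+av_2=(n-1)(t,-aH,a^2-1)$. This gives $\tilde\tau(0,0,1)=-(t,-aH,a^2)$. So the kernel must be a universal family of rank-$t$ objects: the shifts $F[1]$ of the stable bundles in $\modulis_H(t,-aH,a^2)$, where fineness comes from $\gcd(a,t)=1$ (Proposition \ref{11_M(t,-aH,a^2)-property}).

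Your vector $(a,H,\ast)$ does not work:
\begin{itemize}
\item It has rank $a\neq t$, since $t\mid a^2+1$ with $t\geq 2$ rules out $a=t$.
\item The condition $v^2=0$ forces $\ast=t/a\notin\bz$ whenever $a\geq 2$.
\item Twisting by line bundles and shifting cannot change rank $\pm a$ into rank $\pm t$, so no adjustment of your transform realizes $\tilde\tau$.
\end{itemize}
For $(t,a)=(2,1)$, for example, the Gieseker moduli space of class $(1,H,2)$ parametrizes the $\shi_x(H)$. Its universal family gives $(-\otimes\sho_S(H))\circ\funct{T}_{\sho_S}[-1]$, whose square already sends $(0,0,1)$ to $(2,H,1)$. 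Separately, avoiding the Noether--Lefschetz divisors does not exclude automorphisms of the fibers. What it buys is that $\sigma_{t,a}$ exists on every fiber (Proposition \ref{11_main-result-1-lemma1}).

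\textbf{Second gap: base change does not identify the involution.} Even with the right vector, ``compatibility of base change with the kernels'' only gives $\Pi_b\cong\Phi_{\shu_b[1]}$. It does not show that this is the involution of Definition \ref{00_main-definition}, nor that it fixes $\sigma_{t,a}$, and your third step presupposes both. The paper supplies this in Proposition \ref{11_involution-absolute}, in three moves.
\begin{itemize}
\item Normalize the universal family so that $\Phi_{\shu_b[1]}^{\coho}=\tilde\tau$. It is unique only up to a line bundle on $\schm$, not on $B$, and such a twist changes the cohomological action.
\item Use Corollary \ref{11_M(t,-aH,a^2)-stab(S)}: $Z_{t,a}(-t,aH,-a^2)=-1$, and the $\sigma_{t,a}$-semistable objects of this class are exactly the $F[1]$ with $F\in\modulis_H(t,-aH,a^2)$. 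Hence $\Phi_{\shu_b[1]}^{-1}.\sigma_{t,a}$ has central charge $Z_{t,a}\circ\tilde\tau=Z_{t,a}$ and all skyscraper sheaves stable of phase $1$. By Bridgeland's characterization of geometric stability conditions \cite{Bri08}, it equals $\sigma_{t,a}$.
\item Theorem \ref{02_K3-Hu-conway} then identifies $\Phi_{\shu_b[1]}$ with the involution realizing $\tilde\tau$.
\end{itemize}
After this, your argument for $\Pi^2\cong p^*\shl$ goes through as in the paper. The identification of $\sigma_{t,a}$-stability with Gieseker stability is also why the paper can use the classical relative moduli space of $\schh$-stable sheaves \cite[Theorem 4.3.7]{HL}, instead of relative Bridgeland moduli spaces from \cite{BLMNPS21}.
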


\begin{remark}
In particular, $\Pi$ locally induces an $\Stwo$-action on $\cate{D}_p(\schs)$ by \cite{BP23} and locally fixes a stability condition on $\cate{D}_p(\schs)$ by \cite{BLMNPS21}. The local action can be made global for families of K3 surfaces in $\MBE(2t)_a$ by the upcoming work \cite{BPPZ26}.
\end{remark}

The first few $t$'s such that $\MBE(2t)\neq\varnothing$ are $1,2,5,10,13,17,25$. One will see that the open locus $\MBE(4)$ is the locus of quartic K3 surfaces $(S,H)$ in $\MKK(4)$ such that $\sho_S(H)\cong\sho_{\bp^3}(1)|_S$, and there exists a universal family $\schs$ of quartic K3 surfaces together with a uniform $\Stwo$-action on each fiber $\cate{D}^b(\schs_b)$. 

According to \cite{KP17}, one has the identification
$$\DbS_{\Stwo}\cong\schb_Y$$
where $\schb_Y:=\langle\sho_Y,\sho_Y(1)\rangle^{\perp}\subset\cate{D}^b(Y)$ is the Kuznetsov component of the double cover $f\colon Y\rightarrow\bp^3$ branched over the quartic surface $S$ with $\sho_Y(1)=f^*\sho_{\bp^3}(1)$. The threefold $Y$ is usually called the \emph{quartic double solid} over $S$.

\subsection{The categorical degeneration}
The open locus $\MBE(10)$ already contains some interesting information. One can see
$$\MBE(10)=\MKK(10)-\NL_{4,0}^{10}$$
using Theorem \ref{00_main-result-1}. In particular, one has

\begin{proposition}[Proposition \ref{12_main-result-2.1}]\label{00_main-result-2.1}
A polarized K3 surface $(S,H)$ in $\MBE(10)$ belongs to one of the following two cases:
\begin{enumerate}
	\item\label{00_main-result-2.1(1)} a Brill--Noether general surface of degree $10$ which is not in $\NL_{4,0}^{10}$;
	\item\label{00_main-result-2.1(2)} a quartic K3 surface containing an elliptic curve $E$ such that $H\cdot E=3$.
\end{enumerate}
Conversely, any polarized K3 surface in (1) is contained in $\MBE(10)$, and a polarized K3 surface in (2) is in $\MBE(10)$ if one has $\sho_S(H-E)\cong\sho_{\bp^3}(1)|_{S}$.
\end{proposition}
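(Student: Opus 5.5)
We take as given the identification $\MBE(10)=\MKK(10)-\NL^{10}_{4,0}$, which follows from Theorem \ref{00_main-result-1}. For $t=5$ the only admissible $a$ is $a=2$, since $a^2+1\equiv 0 \pmod 5$ with $1\le a\le 5$ forces $a\in\{2,3\}$, and one checks that the finite set $I_{5,a}$ reduces to the single pair $(4,0)$. The proposition then becomes a statement about the classical geometry of degree $10$ K3 surfaces off the divisor of elliptic curves of degree $4$. We therefore split $\MKK(10)$ by Brill--Noether behaviour, using Mukai's theorem: a polarized K3 surface $(S,H)$ of genus $6$ is Brill--Noether general, i.e.\ a quadric section of a Gushel--Mukai threefold (or a linear section of $\Gr(2,5)$ cut by a quadric), unless it contains a divisor $D$ violating the Brill--Noether condition.

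First, we list the Brill--Noether special cases for $g=6$. These are $D$ with $h^0(D)=r+1\ge 2$, $h^0(H-D)=s+1\ge 2$ and $(r+1)(s+1)>6$, together with the non-ample or hyperelliptic/unigonal cases. By a Lazarsfeld/Knutsen style analysis, using lattice reduction and Hodge index on $\langle H,D\rangle$, the relevant classes are:
\begin{itemize}
\item an elliptic curve $E$ with $H\cdot E\le 2$, giving the unigonal or hyperelliptic cases, which correspond to $\NL_{1,0}$ and $\NL_{2,0}$;
\item an elliptic curve with $H\cdot E=3$, the trigonal case $\NL_{3,0}$;
\item an elliptic curve with $H\cdot E=4$, which is $\NL_{4,0}$;
\item a genus $2$ curve $C$ with $C^2=2$ and $H\cdot C=5$, i.e.\ $H=2C$-type, giving the plane quintic and double plane cases $\NL_{5,2}$, or its residual.
\end{itemize}
We check which of these classes can occur together with the condition "not in $\NL_{4,0}$". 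The main point is the Hodge index inequality $n^2\ge 10\cdot 2d$ and the reduction $D\mapsto H-D$.

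The key computation is as follows. If $E$ is elliptic with $H\cdot E\le 2$, then $H-2E$ or $H-E$ is effective of suitable degree and produces a class with $D^2=0$ and $D\cdot H=4$, or else $H$ is not ample. The case $H=2C$ is excluded because $H$ is primitive in the moduli space. The case $H\cdot E=3$ is the genuine survivor: then $H-E$ satisfies $(H-E)^2=10-6=4$ and $(H-E)\cdot H=7$. If $H-E$ is base-point free and not hyperelliptic, it realizes $S$ as a quartic in $\bp^3$ containing $E$ as a plane cubic. Since $H\cdot E=3$, we get $(H-E)\cdot E=3$, so $E$ is a plane cubic. This gives case (2). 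We must verify that $S$ is then not in $\NL_{4,0}$ precisely under generic conditions. For the converse of (2), the hypothesis $\sho_S(H-E)\cong\sho_{\bp^3}(1)|_S$ ensures that no degree $4$ elliptic class appears. We will derive this by classifying the lattice vectors of $\langle H,E\rangle$, or more generally of $\NS(S)$, with $D^2=0$ and $D\cdot H=4$: writing $H=L+E$ with $L$ very ample of degree $4$, such a $D$ would have $D\cdot L\le 4-$(positive), forcing $D\cdot L\le 3$, which produces a low-degree curve on the quartic. We expect this to be contradictory unless $D=E$ or $L$ fails to be very ample.

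The converse for (1) is immediate from the description $\MKK(10)-\NL^{10}_{4,0}$. The main obstacle is the exhaustive Brill--Noether case analysis, in particular showing that every Brill--Noether special surface outside $\NL_{4,0}$ falls into the trigonal case and that $H-E$ is then very ample of degree $4$. We plan to handle this with Saint-Donat's criteria: hyperellipticity of $H-E$ would require a curve $F$ with $F^2=0$ and $(H-E)\cdot F=2$, and we would produce from $F$, $E$ and $H$ a degree $4$ elliptic class, for instance $E+F$ or $F$ itself, contradicting the assumption.
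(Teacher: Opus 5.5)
Your skeleton matches the paper's proof. You use $\MBE(10)=\MKK(10)\setminus\NL^{10}_{4,0}$, invoke the genus-$6$ Brill--Noether classification to put every surface outside (1) into $\NL^{10}_{3,0}\setminus\NL^{10}_{4,0}$, and then show that $D:=H-E$ is very ample of degree $4$ by excluding square-zero classes of small $D$-degree. However, several steps do not work as written.

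\textbf{The plane-quintic divisor.} Your disposal of it is wrong. A class $C$ with $C^2=2$, $C\cdot H=5$ is not of ``$H=2C$-type'', since that would force $H^2=8$, and primitivity of $H$ plays no role. The case also cannot be absorbed into $\NL_{3,0}\cup\NL_{4,0}$, because in $\langle H,C\rangle$ every $H$-degree is divisible by $5$. The correct reason it does not occur is ampleness. One has $(H-2C)^2=-2$ and $(H-2C)\cdot H=0$, so by Riemann--Roch $\pm(H-2C)$ is effective and orthogonal to the ample class $H$. Hence $\NL^{10}_{5,2}=\varnothing$ in $\MKK(10)$. (For $H\cdot E\le 2$ you only need that $4E$ or $2E$ lies in $\NL_{4,0}$.)

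\textbf{Very ampleness of $D$.} The central step is left undone, and the witnesses you suggest do not work. First, $(E+F)^2=2E\cdot F$ vanishes only if $F$ is proportional to $E$. Second, $F$ itself has $H$-degree $2+E\cdot F$, which is $4$ only when $E\cdot F=2$. The class that works, the paper's $D-\Delta$, is $H-E-F$; it has square $0$ and $H$-degree $5-e$, where $e:=E\cdot F$. The Gram determinant of $\langle H,E,F\rangle$ is $4e(3-e)$, so the Hodge index theorem gives $0\le e\le 3$. Then $2F$, $H-E-F$, $F$, $2(H-E-F)$ lie in $\NL_{4,0}$ for $e=0,1,2,3$ respectively. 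The case $F\cdot D=1$ reduces to this via $2F$.

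\textbf{Ampleness of $D$.} You never prove that $H-E$ is ample, which Saint-Donat's criterion and Proposition \ref{11_BE-locus-degree-4} both presuppose. The analogous determinant for $\langle H,E,\Gamma\rangle$ shows that an irreducible curve $\Gamma$ with $(H-E)\cdot\Gamma\le 0$ must be a $(-2)$-curve with $(H\cdot\Gamma,E\cdot\Gamma)\in\{(1,1),(2,2)\}$. In the first case $E+\Gamma\in\NL_{4,0}$; in the second $H-E-\Gamma\in\NL_{5,2}=\varnothing$.

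\textbf{The converse in (2).} The paper leaves this implicit, and your ``low-degree curve'' heuristic does not conclude it. Let $F$ satisfy $F^2=0$ and $F\cdot H=4$. The only case not excluded by very ampleness, the Hodge index theorem and primitivity of $E$ is $F\cdot(H-E)=3$, $F\cdot E=1$. That is a plane cubic, which quartic surfaces do contain. The actual contradiction is that $E-F$ is then a $(-2)$-class orthogonal to the ample class $H-E$.
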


Moreover, there exists a unique Bridgeland--Enriques general branch in $\MBE(10)$ say $\MBE(10)_2$. So any polarized K3 surface $S$ in $\MBE(10)$ is associated with a unique Enriques category $\DbS_{\Stwo}$.

A surface $S$ in Proposition \ref{00_main-result-2.1}~\eqref{00_main-result-2.1(1)} is called a \emph{strongly smooth Gushel--Mukai surface} as in \cite{Beri25,DK18}, and embeds into a smooth threefold $M=\Gr(2,5)\cap\bp^6$. The \emph{special Gushel--Mukai threefold} $X$ over $S$ is the double cover $X\rightarrow M$ branched over $S$, and one can define a subcategory $\scha_X\subset\DbX$ called the \emph{Kuznetsov component} of $X$ (see Definition \ref{12_Kuz-special-GM-3-defi}). Again due to \cite{KP17}, one has $\DbS_{\Stwo}\cong\scha_X$.

On the other hand, one has

\begin{theorem}[Theorem \ref{12_main-result-2.2}]\label{00_main-result-2.2}
Consider a K3 surface $S$ in Proposition \ref{00_main-result-2.1}~\eqref{00_main-result-2.1(2)}, then the $\Stwo$-action on $\DbS$ coming from $(S,H)\in\MBE(10)$ is conjugate to the $\Stwo$-action on $\DbS$ coming from $(S,D)\in\MBE(4)$ for $D:=H-E$. 

In particular, the Enriques category $\DbS_{\Stwo}$ is equivalent to the Kuznetsov component $\schb_Y$ of the quartic double solid $Y$ over $S$.
\end{theorem}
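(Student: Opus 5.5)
We compare the two involutions through their actions on the Mukai lattice, and then promote that comparison to the derived category.

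\emph{Setup.} Let $S$ be as in Proposition \ref{00_main-result-2.1}~\eqref{00_main-result-2.1(2)}, so $H^2=10$, $E^2=0$ and $H\cdot E=3$. Put $D=H-E$. Then
$$D^2=10-6+0=4,$$
so $(S,D)$ is a quartic K3 surface, and by the converse part of Proposition \ref{00_main-result-2.1} we have $\sho_S(D)\cong\sho_{\bp^3}(1)|_S$. Hence $(S,D)\in\MBE(4)$, and both $\Stwo$-actions are defined. Call them $\Pi_H$, from the branch $\MBE(10)_2$ with stability condition $\sigma_{5,2}$, and $\Pi_D$, from $\MBE(4)$ with $a=1$ and stability condition $\sigma_{2,1}$.

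\emph{Step 1: the Mukai lattice.} We compute the involutive Hodge isometries $\tilde\tau_H$ and $\tilde\tau_D$ on $\MukaiZ{S}$ explicitly. The goal is an autoequivalence $\Phi$ with $\Phi_*\tilde\tau_D\Phi_*^{-1}=\tilde\tau_H$. The natural candidate is built from $-\otimes\sho_S(E)$, possibly composed with a spherical twist by $\sho_S$ or $\sho_E$. The reason is that tensoring by $\sho(E)$ identifies $H$ with $D+E$, and the rank-one/$\Delta\cdot H$ data in $Z_{5,2}$ should transform into those of $Z_{2,1}$. Concretely, we compare the central charges: we look for $\Phi$ with $Z_{5,2}\circ\Phi_*=g\cdot Z_{2,1}$ for some $g\in\grp$, working on the sublattice spanned by $v(\sho_S)$, $[D]$, $[E]$ and the point class.

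\emph{Step 2: lifting to the derived category.} Once the isometries agree up to conjugation, the two actions $\Phi\Pi_D\Phi^{-1}$ and $\Pi_H$ both act on $\DbS$ as involutions inducing the same non-symplectic involution on cohomology. We then use the uniqueness results that the introduction attributes to \cite{FL23,Liu26b}, namely that an involution is essentially determined by its fixed stability condition and its cohomological action. It suffices to show that $\Phi$ maps $\sigma_{2,1}$ to $\sigma_{5,2}$ up to the $\grp$-action, and to check that the fixed stability condition is essentially unique. Then the two $\Stwo$-actions are conjugate, possibly after twisting by a character of $\Stwo$, which does not change the equivariant category.

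\emph{Main obstacle.} The main difficulty is matching the stability conditions: we must verify that $\Phi\cdot\sigma_{2,1}$ and $\sigma_{5,2}$ lie in the same component and are related by $\grp$. This is where the non-generic Picard lattice $\langle H,E\rangle$ matters, since walls coming from $E$ could a priori separate them. The first thing to try is to check geometricity: both should be geometric stability conditions at which skyscraper sheaves, or their images, are stable. For this we use that $S\notin\NL^{10}_{4,0}$, which excludes the problematic classes in $I_{5,2}$.

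\emph{Consequence.} The final claim then follows formally. Conjugate actions give equivalent equivariant categories, so $\DbS_{\Stwo}$ is equivalent to the Enriques category attached to $(S,D)\in\MBE(4)$, and by \cite{KP17} that category is $\schb_Y$.
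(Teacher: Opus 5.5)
Your outline has the same architecture as the paper's proof. You conjugate $\Pi_{2,1}$ by an autoequivalence $\Phi$ with $\Phi.\sigma_{2,1}\in\sigma_{5,2}.\grp$, check that $\Phi\circ\Pi_{2,1}\circ\Phi^{-1}$ induces $\phi_{5,2}$, and conclude from the injectivity of $\rho$ on $\Aut(\DbS,\sigma_{5,2})$ (Theorem \ref{02_K3-Hu-conway}). No uniqueness of the fixed stability condition is needed for that last step.

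The gap is that you never produce $\Phi$, and the candidates you list do not work. There is no spherical twist by $\sho_E$, since $v(\sho_E)=(0,E,0)$ is isotropic. For the other candidates, use the following test. If $\Phi_*\circ\phi_{2,1}\circ\Phi_*^{-1}=\phi_{5,2}$, then $\Phi_*^{-1}$ must send the $\phi_{5,2}$-anti-invariant vector $(6,2E-3H,6)$ to a $\phi_{2,1}$-anti-invariant vector, hence into $\ker Z_{2,1}$. This test fails for $-\otimes\sho_S(\pm E)$, and for every two-factor composite of these with $\funct{T}_{\sho_S}$. What works, and what the paper uses, is the conjugate $\funct{T}_{\sho_S(-E)}=(-\otimes\sho_S(-E))\circ\funct{T}_{\sho_S}\circ(-\otimes\sho_S(E))$. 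It acts on $\MukaiZ{S}$ as the reflection $s$ in $(1,-E,1)$, and one computes $Z_{5,2}\circ s=\frac{3+\bm{i}}{5}Z_{2,1}$.

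The step you call the main obstacle is the real content of the proof, and you leave it open. Since $\funct{T}_{\sho_S(-E)}(\sho_x)\cong\shi_x(-E)[1]$, the stability condition $\funct{T}^{-1}_{\sho_S(-E)}.\sigma_{5,2}$ is geometric exactly when every $\shi_x(-E)$ is $\sigma_{5,2}$-stable (Proposition \ref{12_main-result-2.2-lemma}). Once it is geometric, it lies in $\Stab^{\dag}(S)$. The central charge identity together with Bridgeland's description of geometric stability conditions then puts it in $\sigma_{2,1}.\grp$.

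The input for this stability is $E\cdot H=3$, not $S\notin\NL^{10}_{4,0}$; the latter only guarantees that $\sigma_{5,2}$ exists. One way to see the stability:
\begin{itemize}
\item $\mu_H(\shi_x(-E))=-3>-4$ places $\shi_x(-E)$ in $\scha_{H/5,-2H/5}$ with $\Im Z_{5,2}=\frac15$. This is the smallest positive value of $\Im Z_{5,2}$ on $\Num(S)$.
\item So in a destabilizing sequence $0\to A\to\shi_x(-E)\to B\to0$ in the heart, one needs $\Im Z_{5,2}(A)=0$. If instead $\Im Z_{5,2}(B)=0$, then $Z_{5,2}(B)<0$ and $A$ has smaller phase.
\item Such an $A$ is a sheaf (as $\shh^{-1}(\shi_x(-E))=0$) in the torsion part of the tilt, with $4\rank(A)+c_1(A)\cdot H=0$. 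Hence $A$ is zero-dimensional.
\item Then $\shh^{-1}(B)=0$, since it is torsion-free and injects into $A$. So $A$ would be a nonzero subsheaf of the torsion-free sheaf $\shi_x(-E)$, which is absurd.
\end{itemize}

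Finally, $s\circ\phi_{2,1}\circ s=\phi_{5,2}$ must still be checked by hand. Here $S$ has Picard rank at least two, so Theorem \ref{02_FL-result} does not apply and $\Aut(\DbS,\sigma_{5,2})$ need not be $\Stwo$.

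A smaller point on your setup: $(S,D)\in\MBE(4)$ follows from the proof of Proposition \ref{12_main-result-2.1}, which shows $D$ is very ample. It does not follow from the converse clause of that proposition, which assumes this.
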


\begin{remark}
A similar statement has been established in \cite[Proposition 8.9]{BP23} for K3 surfaces in Proposition \ref{00_main-result-2.1}~\eqref{00_main-result-2.1(2)} with Picard rank two.
\end{remark}

Also, it is shown in \cite{KS25} that one can canonically associate the quartic double solid $Y$ with a degenerate Gushel--Mukai threefold $X_{\star}$ such that $\schb_Y$ is equivalent to a subcategory $\bar{\scha}_{X_{\star}}\subset\cate{D}^b(X_{\star})$ called the \emph{Kuznetsov component} of $X_{\star}$. 

In conclusion, one can find families of Enriques categories
$$\cate{D}_p(\schs_B)_{\Stwo}\rightarrow B$$
whose fiber $\cate{D}^b(\schs_b)_{\Stwo}$ at $b\in B$ is either equivalent to the Kuznetsov component of a special Gushel--Mukai threefold or that of a degenerate one.

\begin{remark}
It would be interesting to prove the equivalence
$$\schb_Y\cong\cate{D}^b(S)_{\Stwo}\cong\bar{\scha}_{X_{\star}}$$ 
for the quartic double solid $Y$ over a K3 surface $S$ in Proposition \ref{00_main-result-2.1}~\eqref{00_main-result-2.1(2)} directly using the equivariant categories together with the geometry of $S$ and $X_{\star}$. 
\end{remark}

\subsection{Towards Hodge-special Gushel--Mukai fourfolds}
It also turns out that the open locus $\MBE(2t)$ for $t\geq 5$ is related to another important family of 2-Calabi--Yau categories: the Kuznetsov components of Gushel--Mukai fourfolds.

A \emph{Gushel--Mukai fourfold} is a smooth four-dimensional intersection
$$X=\CGr(2,V_5)\cap\bp^8\cap Q$$
where $Q\subset\bp^8\subset\bp(\bc\oplus\land^2V_5)$ is a quadric hypersurface. One can define for any $X$ a 2-Calabi--Yau category $\schc_X$ called the \emph{Kuznetsov component of $X$} together with an $\Stwo$-group action on $\schc_X$ such that $(\schc_X)_{\Stwo}$ is an Enriques category \cite{KP17,KP18}. 

A Gushel--Mukai fourfold $X$ is called \emph{Hodge-special} when $\HH^{2,2}(X,\bz)$ contains a rank-three primitive sublattice $\Lambda$ containing $\gamma_X^*\HH^4(\Gr(2,V_5),\bz)$ for the Gushel map $\gamma_X\colon X\rightarrow\Gr(2,V_5)$. The \emph{discriminant} of a Hodge-special $X$ is defined to be the discriminant $\disc(\Lambda)=2t$, where $t\geq 5$ and $t\equiv 0,1,2\pmod{4}$ due to \cite{DIM15}. 

In this article, we will call $X$ a \emph{first-type Hodge-special Gushel--Mukai fourfold} when its discriminant $2t$ satisfies $t\equiv1,2\pmod{4}$, and a second-type otherwise. 

\begin{proposition}[Proposition \ref{13_main-result-3.1}]\label{00_main-result-3.1}
Consider a Gushel--Mukai fourfold $X$ such that there exists an equivalence $\schc_X\cong\DbS$ for some Bridgeland--Enriques general K3 surface $S$ that is compatible with the $\Stwo$-actions, then $X$ is a first-type Hodge-special Gushel--Mukai fourfold.
\end{proposition}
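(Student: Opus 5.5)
Since $S$ is Bridgeland--Enriques general, its polarized degree $2t$ is such that $\MBE(2t)\neq\varnothing$. By the criterion quoted from Theorem \ref{02_FL-result}, $4\nmid t$ and every odd prime factor of $t$ is $\equiv1\pmod 4$, so $t\equiv 1,2\pmod 4$ (or $t\equiv 3\pmod 4$ is excluded because an odd $t$ with all prime factors $\equiv 1 \pmod 4$ is itself $\equiv 1\pmod 4$). The plan is to transport this numerical information to $X$ through the equivalence and identify the discriminant of the resulting lattice with $2t$.

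\textbf{Step 1: transport lattices.} An equivalence $\schc_X\cong\DbS$ induces a Hodge isometry $\Kum(\schc_X)_{\topo}\cong\MukaiZ{S}$ on Mukai-type lattices. For Gushel--Mukai fourfolds one has the rank-two lattice $\Ala_1\oplus\Ala_1$, spanned by the classes $\lambda_1,\lambda_2$ coming from $\gamma_X^*\HH^4(\Gr(2,V_5),\bz)$, always sitting inside the algebraic part of $\Kum(\schc_X)_{\topo}$. The generator of the $\Stwo$-action on $\schc_X$ acts on $\Kum(\schc_X)_{\topo}$, and the compatibility hypothesis says that this action corresponds to $\tilde\tau$ on $\MukaiZ{S}$. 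By \cite{KP18}, the action on $\schc_X$ restricts to $\langle\lambda_1,\lambda_2\rangle$ as an explicit involution, swapping or negating the $\lambda_i$ appropriately. Its fixed and anti-fixed parts in the algebraic lattice must therefore match those of $\tilde\tau$ on $\Mst_{2t}=\HyU\oplus\bz(2t)$.

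\textbf{Step 2: find $\Lambda$.} The algebraic lattice $\HH^0\oplus\bz H\oplus\HH^4=\Mst_{2t}$ has rank three. The image of $\langle\lambda_1,\lambda_2\rangle$ lies in the algebraic part of $\MukaiZ{S}$. I would argue, first for the general member of the family where $\NS(S)=\bz H$ so the algebraic part is exactly $\Mst_{2t}$, that this image is contained in $\Mst_{2t}$, and in general via the invariant/anti-invariant decomposition under $\tilde\tau$. The preimage of $\Mst_{2t}$ then gives a rank-three primitive sublattice of the algebraic part of $\Kum(\schc_X)_{\topo}$ containing $\lambda_1,\lambda_2$. By the correspondence between $\Kum(\schc_X)_{\topo}$ and the vanishing/primitive cohomology of $X$ (as in \cite{Pert} / \cite{KP18}), this yields a rank-three primitive $\Lambda\subset\HH^{2,2}(X,\bz)$ containing $\gamma_X^*\HH^4(\Gr(2,V_5),\bz)$, so $X$ is Hodge-special.

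\textbf{Step 3: compute the discriminant.} The discriminant of $\Lambda$ equals, up to the standard sign and normalization, the discriminant of the orthogonal complement of $\langle\lambda_1,\lambda_2\rangle$ in $\Mst_{2t}$ tensored appropriately. Since $\Ala_1^{\oplus 2}$ and $\Mst_{2t}$ have explicit forms, this gives $\disc\Lambda=2t$. Combined with the arithmetic constraint above, $t\equiv1,2\pmod4$, so $X$ is first-type.

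\textbf{Main obstacle.} I expect the hardest step to be Step 2: showing that the $\Ala_1^{\oplus2}$ of $X$ lands precisely in $\Mst_{2t}$ primitively, and not in some other algebraic sublattice. I would use the hypothesis that the equivalence intertwines the $\Stwo$-actions. The fixed lattice of $\tilde\tau$ on the algebraic part is computed from $\tau$ on $\Mst_{2t}$, and this must match the corresponding fixed lattice of the involution on $\Kum(\schc_X)_{\topo}$, which is determined by $\lambda_1,\lambda_2$. The discriminant-group comparison of these fixed lattices then pins down the embedding.
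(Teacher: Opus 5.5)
Your outline follows the route the paper intends; the paper records the statement as a direct consequence of Theorem~\ref{02_FL-result}~\eqref{02_FL-result(3)}, Theorem~\ref{11_main-result-1} and the definitions. Your derivation of $t\equiv 1,2\pmod 4$ is correct. However, the two steps that carry the lattice-theoretic content are not right as written.

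\emph{The action on $\lambda_1,\lambda_2$.} The involution $\psi$ on $\MukaiZ{\schc_X}$ does not swap or negate the $\lambda_i$. It fixes $\lambda_1$ and $\lambda_2$ and acts as $-1$ on $\langle\lambda_1,\lambda_2\rangle^{\perp}$, the part matching $\HH^4(X,\bz)_{00}$. So its fixed lattice is exactly $\langle\lambda_1,\lambda_2\rangle$. Your Step~2 depends on precisely this fact, and once you use it the ``main obstacle'' disappears:
\begin{itemize}
\item The equivariant equivalence gives a Hodge isometry $\MukaiZ{\schc_X}\cong\MukaiZ{S}$ intertwining $\psi$ with $\tilde\tau$, so it carries fixed lattice onto fixed lattice.
\item The action on a Bridgeland--Enriques general $S$ is realized by an isometry in $\HR(2t)$ (Proposition~\ref{11_involution-description} and Theorem~\ref{11_main-result-1}). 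Its fixed lattice is $\langle\bar v_1,\bar v_2\rangle\subset\Mst_{2t}$ by Proposition~\ref{11_BE-locus-square-2-vector}.
\end{itemize}
Hence $\langle\lambda_1,\lambda_2\rangle$ maps onto $\langle\bar v_1,\bar v_2\rangle$ for the given $S$, whatever its Picard rank. You need neither a case split by Picard rank nor a discriminant-group comparison of fixed lattices. (For $t=1$ the fixed lattice of the covering involution contains all of $\Mst_2$, which has rank three, so the hypothesis can never hold there.)

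\emph{The discriminant.} Step~3 does not follow from the discriminant of the complement alone.
\begin{itemize}
\item The complement of $\langle\bar v_1,\bar v_2\rangle$ in $\Mst_{2t}$ is $\bz\kappa$, with $\kappa=(t,-aH,t(n-1))$ and $\kappa^2=-2t$.
\item Pertusi's isometry sends $\kappa$ to an algebraic $\kappa'\in\HH^4(X,\bz)_{00}$ with $\kappa'^2=2t$.
\item But $\gamma_X^*\HH^4(\Gr(2,V_5),\bz)\oplus\bz\kappa'$ has discriminant $4\cdot 2t=8t$. Its saturation $\Lambda$ has discriminant $8t$ or $2t$ according as the index is $1$ or $2$.
\end{itemize}
So you must prove the index is $2$, i.e.\ that $\kappa'$ is divisible by $2$ in $\HH^4(X,\bz)_{00}$. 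This comes from the Mukai side:
\begin{itemize}
\item $\langle\bar v_1,\bar v_2\rangle\oplus\bz\kappa$ has discriminant $-8t$, while $\Mst_{2t}$ has discriminant $-2t$.
\item Hence $\Mst_{2t}$ contains some $\tfrac12(\ell+\kappa)$ with $\ell\in\langle\bar v_1,\bar v_2\rangle$. Therefore $\langle\kappa,y\rangle\in 2\bz$ for every $y\in\langle\bar v_1,\bar v_2\rangle^{\perp}$.
\item This divisibility transfers through Pertusi's identification of $\langle\lambda_1,\lambda_2\rangle^{\perp}$ with $\HH^4(X,\bz)_{00}$, which is an isometry up to the sign of the form (compare signatures).
\item Unimodularity of $\HH^4(X,\bz)$, together with $\gamma_X^*\HH^4(\Gr(2,V_5),\bz)\cong\Ala_1^2$, then produces the index-two overlattice.
\end{itemize}
Thus the correct comparison is between the rank-three lattices: $\disc\Lambda=-\disc\Mst_{2t}=2t$. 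It agrees numerically with the discriminant of $\bz\kappa$ only because of this index-two gluing on both sides.
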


Conversely, one has

\begin{theorem}[Theorem \ref{13_main-result-3.2}]\label{00_main-result-3.2}
Consider a first-type Hodge-special Gushel--Mukai fourfold $X$ with $\rank \HH^{2,2}(X,\bz)=3$ and discriminant $2t$, then for any non-empty open locus $\MBE(2t)_a$ one can find a K3 surface $S$ in it such that there exists an equivalence $\schc_X\cong\DbS$ compatible with the $\Stwo$-actions.
\end{theorem}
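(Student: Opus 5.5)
The plan is to pass everything through lattices and Hodge theory, and then use derived Torelli to return to categories. The ingredients are the Mukai lattice $\Mukai(\schc_X,\bz)$ of the Kuznetsov component, the known description of its algebraic part, and the lattice-theoretic characterization of $\MBE(2t)_a$ from Theorem \ref{00_main-result-1}.

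First, recall that $\Mukai(\schc_X,\bz)$ is isometric to the even unimodular lattice $\ExL_8(-1)^2\oplus\HyU^4$ of signature $(4,20)$ with a weight-two Hodge structure. It always contains the rank-two algebraic sublattice $\Ala_1^{\oplus 2}$ (up to sign conventions), spanned by the classes $\lambda_1,\lambda_2$ coming from $\gamma_X^*$. The $\Stwo$-action on $\schc_X$ acts on this lattice; following \cite{KP18}, it acts on $\langle\lambda_1,\lambda_2\rangle$ by swapping or negating a class, and as $-1$ or $+1$ on the orthogonal complement. Since $X$ is Hodge-special of discriminant $2t$ with $\rank\HH^{2,2}(X,\bz)=3$, the algebraic part $\Mukai_{\rm alg}(\schc_X,\bz)$ has rank $3$, contains $\langle\lambda_1,\lambda_2\rangle$, and has discriminant $2t$. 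For the first-type condition $t\equiv1,2\pmod 4$, the known computation (e.g.\ Pertusi's work on GM fourfolds) identifies $\Mukai_{\rm alg}(\schc_X,\bz)$ with a lattice containing a hyperbolic plane $\HyU$, namely $\Mukai_{\rm alg}\cong\HyU\oplus\bz(2t)=\Mst_{2t}$. By Mukai/Huybrechts derived Torelli for K3 categories (\cite{BLMNPS21}, or the Addington--Thomas-type statement for GM fourfolds), this gives a K3 surface $S$ with $\schc_X\cong\DbS$, inducing a Hodge isometry $\Mukai(\schc_X,\bz)\cong\MukaiZ{S}$ under which $\Mukai_{\rm alg}$ corresponds to $\HH^0\oplus\NS(S)\oplus\HH^4$ of rank three. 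Hence $S$ has Picard rank one, with a degree $2t$ class $H$ that we may choose ample.

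Next, I fix $a$ with $\MBE(2t)_a\neq\varnothing$. The task is to choose the equivalence, equivalently the marking of $\Mst_{2t}$, so that the involution induced by the $\Stwo$-action on $\schc_X$ restricts on $\Mst_{2t}$ to the involution $\tau_a$ defining the branch $a$. Those branch involutions correspond to the different $O(\Mst_{2t})$-orbits of vectors of square $2$ (or to the decompositions of $\Mst_{2t}$ as an $\Stwo$-lattice). The residual freedom is twofold: we may change the Fourier--Mukai partner, which only changes $S$ while preserving the Hodge structure, and we may compose with autoequivalences. Together these let us move the $\tau$-anti-invariant vector of square $2$ in $\Mst_{2t}$ around within its orbit under isometries extending to the transcendental part. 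I will show that every $a$ with $a^2+1\equiv0\pmod t$ arises by exhibiting, in $\Mst_{2t}$, an isometry carrying the $X$-induced involution to $\tau_a$. This amounts to solving $x^2-ty^2$-type equations, which are governed by $a^2\equiv-1\pmod t$; the extension to the whole lattice follows from Nikulin, since the discriminant group of $\Mst_{2t}$ is $\bz/2t$.

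Finally, I need to check that the resulting $S$ lies in $\MBE(2t)_a$ and not merely in $\MKK(2t)$. With Picard rank one, $S$ avoids every Noether--Lefschetz divisor, so it lies in the open set of Theorem \ref{00_main-result-1}. The equivalence is then compatible with the $\Stwo$-actions: two involutive lifts of the same Hodge involution on Picard rank one K3 categories are conjugate by the uniqueness results of \cite{FL23,Liu26b}, and this is where Enriques-ness (the residual action on Serre functors) must match. The main obstacle is the lattice step: showing that the $\Stwo$-involution on $\Mukai_{\rm alg}(\schc_X)$ can be conjugated to each $\tau_a$, that is, that the branches in Theorem \ref{00_main-result-1} correspond exactly to the Fourier--Mukai partners, or markings, available for one fixed $X$. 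I would try the class group and genus count for binary forms of discriminant $-4t$ first.
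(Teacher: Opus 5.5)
\textbf{Verdict: genuine gap.} The key step, reaching \emph{every} branch $a$, is left as a programme, and that programme is set up incorrectly.

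\textbf{What matches the paper.} Your first step is the paper's: Pertusi's Hodge-associated K3 surface, then the hyperbolic-plane criterion of \cite{PPZ22} plus derived Torelli, giving $\schc_X\cong\DbS$ with $S$ of Picard rank one. One correction: a hyperbolic plane in the algebraic part requires every odd prime factor of $t$ to be $\equiv 1\pmod 4$. You must take this from $\MBE(2t)_a\neq\varnothing$ (i.e.\ $a^2\equiv-1\pmod t$), not from $t\equiv 1,2\pmod 4$ alone.

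\textbf{Where the gap is.} The missing step is the one you flag yourself: reaching every branch $a$, not only the one into which the involution coming from $X$ happens to fall. This is the whole content of the theorem beyond Pertusi and \cite{PPZ22}.
\begin{itemize}
\item You assert that the branches correspond to different $\OO(\Mst_{2t})$-orbits, and then propose to carry one $\tau_a$ to another by an isometry of $\Mst_{2t}$. These two claims are incompatible, and the first is false.
\item $\tau_a$ is the reflection in the anti-invariant vector $w_a=(t,-aH,t(n-1))$, which has square $-2t$; its invariant lattice is $\langle\bar v_1,\bar v_2\rangle\cong\Ala_1^2$. Nothing about it is governed by a square-$2$ anti-invariant vector.
\item The $\Stwo$-action on $\schc_X$ fixes $\lambda_1$ and $\lambda_2$; it does not swap or negate them.
\item Your compatibility argument (``two involutive lifts of the same Hodge involution are conjugate'') presupposes that the Hodge involutions have already been matched. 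The paper instead uses Theorem \ref{02_FL-result} to see that the transported involution is conjugate to some $\Pi_{a_0}$.
\end{itemize}

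\textbf{The missing idea: a discriminant-group label.} Let $A:=\Mst_{2t}^\vee/\Mst_{2t}\cong\bz/2t$, generated by $H/2t$.
\begin{itemize}
\item The class of $w_a/t$ in $A$ is $-2a$.
\item Since $T(S)$ has odd rank $21$, its only Hodge isometries are $\pm1$. Hence autoequivalences act on $A$ by $\pm1$, and $\pm a\bmod t$ is a conjugacy invariant.
\item By Theorem \ref{02_FL-result} and Proposition \ref{11_involution-description}, this label classifies the conjugacy classes of involutions.
\item The map $\OO^+(\Mst_{2t})\to\OO(A)=\{\epsilon:\epsilon^2\equiv1\pmod{4t}\}\cong(\bz/2)^{\omega(t)}$ is surjective (Nikulin, using the summand $\HyU$). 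Here $\omega(t)$ is the number of distinct prime factors of $t$.
\item An isometry acting on $A$ by $\epsilon$ turns label $a$ into $\epsilon a$. By the Chinese remainder theorem these sign changes act transitively on the square roots of $-1$ modulo $t$. So all $\tau_a$ lie in a single $\OO^+(\Mst_{2t})$-orbit.
\end{itemize}

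\textbf{How this finishes the proof.} Gluing $g\in\OO^+(\Mst_{2t})$ with $\id_{T(S)}$ produces a Fourier--Mukai partner $S'$ together with a Hodge isometry $\MukaiZ{S}\cong\MukaiZ{S'}$ restricting to $g$. These partners are indexed by $\OO(A)/\{\pm1\}$ (cf.\ \cite{HLOS03}). Choose $\epsilon$ with $\epsilon a_0\equiv\pm a\pmod t$. The transported involution on $\cate{D}^b(S')$ then has label $a$, so it is conjugate to $\Pi_a$. Finally, $S'\in\MBE(2t)_a$ because it has Picard rank one.

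\textbf{Comparison with the paper.} This is what the paper's last step encodes: passing to Fourier--Mukai partners and comparing the counts of \cite{FL23} and \cite{HLOS03}. No class-group or genus theory for discriminant $-4t$ is needed. Only surjectivity of partners onto labels matters. Your hoped-for exact correspondence between branches and Fourier--Mukai partners fails for even $t$: the element with $\epsilon\equiv-1\pmod 4$ and $\epsilon\equiv1$ modulo the odd part of $t$ acts trivially on labels. For example, $t=10$ has two partners but only one conjugacy class.
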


\begin{remark}
Compared with \cite[Proposition 4.3]{BP21}, these K3 surfaces cover all the Fourier--Mukai partners of $X$.
\end{remark}

The second-type Hodge-special Gushel--Mukai fourfolds are related to twisted K3 surfaces via \cite{BP21,Per19}. One should be able to formulate a notion of Bridgeland--Enriques general twisted K3 surfaces for a twisted version of Proposition \ref{00_main-result-3.1} as well as Theorem \ref{00_main-result-3.2}. It is more difficult due to the phenomena discussed in \cite[Section 3.3]{Per19} and the lack of a clean twisted version of \cite{FL23,Hu16,Ka14}. 

On the other hand, Theorem \ref{00_main-result-3.2} already has some useful implications and brings up some difficult questions.

\subsection{Moduli spaces and double EPW sextics}
At first, an $\Stwo$-equivariant equivalence $\schc_X\cong\DbS$ for some $S$ in $\MBE(2t)_a$ will ensure a sublattice $\Ala_1^2$ in the algebraic part of the Mukai lattice $\MukaiZ{S}$ for any $S$ in $\MBE(2t)_a$. One can work out the generators say $\bar{v}_1$ and $\bar{v}_2$ explicitly in terms of $t$ and $a$.

In this case, the $\Stwo$-action on $\cate{D}^b(S)$ for any polarized K3 surface $(S,H)$ in $\MBE(2t)_a$ fixes the two moduli spaces $\modulis_{\sigma_{t,a}}(\bar{v}_1)$ and $\modulis_{\sigma_{t,a}}(\bar{v}_2)$ of semistable objects according to Theorem \ref{00_main-result-1.2}. The following statement is not hard.

\begin{proposition}[Proposition \ref{13_main-result-4.1}]\label{00_main-result-4.1}
Consider a very general K3 surface $S$ in some open locus $\MBE(2t)_a$ for $t\geq10$, then the moduli spaces $\modulis_{\sigma_{t,a}}(\bar{v}_1)$ and $\modulis_{\sigma_{t,a}}(\bar{v}_2)$ are $\KK^{[2]}$-type hyperkähler manifolds of degree $2$.
\end{proposition}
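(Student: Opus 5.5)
We prove that for very general $S$ in $\MBE(2t)_a$ with $t\geq 10$, each $\modulis_{\sigma_{t,a}}(\bar v_i)$ is a $\KK^{[2]}$-type hyperkähler manifold carrying a natural degree-$2$ class. The strategy is to invoke the Bayer--Macrì theory of moduli of Bridgeland-stable objects on K3 surfaces. Recall that it applies once $\bar v_i$ is primitive and $\sigma_{t,a}$ is $\bar v_i$-generic. Under these hypotheses $\modulis_{\sigma}(v)$ is a smooth projective hyperkähler manifold of $\KK^{[n]}$-type with $v^2=2n-2$. Moreover, the Mukai map identifies $\HH^2(\modulis_\sigma(v),\bz)$ with $v^\perp\subset\MukaiZ{S}$, as a Hodge isometry preserving the BBF form.

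The first step is lattice-theoretic. The pair $\bar v_1,\bar v_2$ is defined as generators of a sublattice $\Ala_1^2$ in the algebraic Mukai lattice. Hence $\bar v_i^2=2$ and $\bar v_1\cdot\bar v_2=-1$, so each $\bar v_i$ is primitive with $\bar v_i^2=2$. This gives dimension $4$, i.e.\ $n=2$.

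The second step is genericity. For very general $S$ in $\MBE(2t)_a$ the Picard rank is one, so the algebraic Mukai lattice is $\Mst_{2t}=\HyU\oplus\bz(2t)$. I would check that $\sigma_{t,a}$ lies on no wall for $\bar v_i$. Walls for a class with $v^2=2$ come from classes $w$ with $w^2\geq -2$ spanning together with $v$ a negative-definite-free hyperbolic rank-two lattice. These are finitely many numerical possibilities inside $\Mst_{2t}$. Using the explicit central charge $Z_{t,a}$, I would verify that no such $w$ has $Z_{t,a}(w)$ aligned with $Z_{t,a}(\bar v_i)$. I expect this to be the main obstacle: it requires a parametrization of the candidate $w$ in terms of $t$ and $a$, and is presumably where $t\geq 10$ enters. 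For small $t$ some $w$, for instance a spherical class, can make $\sigma_{t,a}$ lie on a wall or make the moduli space singular or empty. If the direct check is messy, I would try the following instead. Since $\Pi$ fixes $\sigma_{t,a}$ and exchanges or preserves the $\bar v_i$, it suffices to work near the $\Stwo$-fixed locus and use the uniqueness of the fixed stability condition established earlier. That uniqueness should force $\sigma_{t,a}$ into the interior of a chamber.

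The third step produces the degree-$2$ polarization. In $\bar v_i^\perp\cap\Mst_{2t}$, which has rank $2$, the relevant class should be $\bar v_{j}+\tfrac12\bar v_i$ rescaled. Concretely, I would look for a primitive class $h\in\bar v_i^\perp$ of square $2$ that is algebraic; by the computation I expect $h$ to be built from $\bar v_j$ and $\bar v_i$. I would then show that $h$, or its image under the Bayer--Macrì map $\ell_{\sigma}$, is ample or nef and big with divisibility compatible with degree $2$. The natural route is Mongardi's description of degree-$2$ $\KK^{[2]}$-type (the double EPW sextic case), using primitive square $2$ and divisibility $1$. Divisibility is read off from the explicit lattice $\bar v_i^\perp$. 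Positivity follows because, in Picard rank two of the moduli space, the ample cone is cut out by walls, and the absence of the relevant flopping and divisorial classes is exactly what $t\geq10$ guarantees. This yields the claim.
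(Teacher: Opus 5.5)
Your framework is the right one: $\bar v_i$ is primitive of square $2$, you need a $\bar v_i$-generic stability condition, and then Bayer--Macr\`i applies. The gap is that the step carrying all the content, genericity of $\sigma_{t,a}$ for $\bar v_i$, is never proved, and the fallback you suggest is false.

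Being fixed by $\Pi$, or uniqueness of the fixed stability condition, cannot push $\sigma_{t,a}$ off the walls. Take $t=5$, $a=2$. Then $\bar v_1=(1,0,-1)$, and on a Picard rank one surface $\ker Z_{5,2}$ is spanned by $(5,-2H,5)$, of square $-10$. The class splits as $\bar v_1=(3,-H,2)+(-2,H,-3)$ into two spherical classes, each with central charge $\tfrac12 Z_{5,2}(\bar v_1)$. So the $\Pi$-fixed $\sigma_{5,2}$ lies on a wall (Example~\ref{11_main-result-1-lemma2-example}). Also, $\Pi$ does not exchange the $\bar v_i$: they span its invariant lattice.

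The missing idea is Corollary~\ref{11_BE-central-charge}: $Z_{t,a}(x)=\frac1t(v+u\sqrt{-1})(\langle x,\bar v_1\rangle+\langle x,\bar v_2\rangle\sqrt{-1})$, where $u^2+v^2=t$.
\begin{enumerate}
\item This formula forces every Jordan--H\"older factor of a strictly semistable object of class $\bar v_i$ to have central charge $\tfrac12 Z_{t,a}(\bar v_i)$. So there are exactly two factors, of classes $w$ and $\bar v_i-w$.
\item The class $w_0:=2w-\bar v_i$ lies in $\ker Z_{t,a}\cap\Num(S)=\{\bar v_1,\bar v_2\}^{\perp}\cap\Num(S)$, which is negative definite.
\item Hence $w^2=(\bar v_i-w)^2$ and $w_0^2=4w^2-2<0$. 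Together with $w^2\geq-2$ this gives $w^2\in\{0,-2\}$.
\item The case $w^2=0$ would put a $(-2)$-class in $\ker Z_{t,a}$, which is impossible in $\Stab^{\dag}(S)$. So $w_0^2=-10$.
\item In Picard rank one the kernel is $\bz(t,-aH,a^2+1)$, and the generator has square $-2t$. So $-10=-2tk^2$ forces $t=5$.
\end{enumerate}
Thus for $t\geq 10$ there is no strictly semistable object of class $\bar v_i$, and Proposition~\ref{wall-and-chamber} for primitive classes gives genericity. This single kernel computation is where the hypothesis on $t$ enters; no enumeration of walls is needed.

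Your lattice input is also wrong. $\Ala_1^2$ means two \emph{orthogonal} square-$2$ vectors (Proposition~\ref{11_BE-locus-square-2-vector}), so $\langle\bar v_1,\bar v_2\rangle=0$, not $-1$. Primitivity is unaffected, but your third step then looks for the wrong class: $\bar v_j+\tfrac12\bar v_i$ is not even orthogonal to $\bar v_i$. The degree-$2$ class is simply $\theta_{\bar v_i}(\bar v_{3-i})$ up to sign, since $\bar v_{3-i}\in\bar v_i^{\perp}$ has square $2$.

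Its ampleness needs neither an ample-cone analysis nor $t\geq 10$. One route: once $\sigma_{t,a}$ is generic, the Bayer--Macr\`i class $\ell_{\sigma_{t,a}}$ is ample, and by Corollary~\ref{11_BE-central-charge} it is a nonzero real multiple of $\theta_{\bar v_i}(\bar v_{3-i})$. The other route, as in Proposition~\ref{11_main-result-1-lemma2}: $\Pi$ fixes both $\sigma_{t,a}$ and $\bar v_i$, so it induces an anti-symplectic involution on the moduli space. The invariant part of $\HH^2$ under this involution is $\bz\,\theta_{\bar v_i}(\bar v_{3-i})$, and averaging any ample class produces an invariant ample class.
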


According to \cite{DK18}, a Gushel--Mukai fourfold $X$ is canonically associated with two irreducible symplectic varieties, the double EPW sextic $\tilde{Y}_X$ and double dual EPW sextic $\tilde{Y}_X^\vee$. These varieties are $\KK^{[2]}$-type hyperkähler manifolds with degree $2$ when they are smooth. Moreover, they admit natural non-symplectic involutions whose invariant lattices are exactly the rank one sublattice spanned by the ample class.

Using this information and our understanding of $\modulis_{\sigma_{t,a}}(\bar{v}_i)$, one can establish the following statement.

\begin{theorem}[Theorem \ref{13_main-result-4.2}]\label{00_main-result-4.2}
Consider an equivariant equivalence $\schc_X\cong\DbS$ for some Gushel--Mukai fourfold $X$ and some Picard rank one K3 surface $S$ in $\MBE(2t)_a$ with $t\geq 10$ such that the double EPW varieties $\tilde{Y}_X$ and $\tilde{Y}^\vee_X$ are smooth, then $\modulis_{\sigma_{t,a}}(\bar{v}_i)$ is isomorphic to $\tilde{Y}_X$ or $\tilde{Y}^\vee_X$ for $i=1,2$. 
\end{theorem}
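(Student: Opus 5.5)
We use the $\Stwo$-equivariant equivalence $\schc_X\cong\DbS$ to move the whole problem onto the K3 side, and then apply a Torelli-type argument. A Picard rank one surface $S$ is very general, so Proposition \ref{00_main-result-4.1} applies: each $\modulis_{\sigma_{t,a}}(\bar v_i)$ is a smooth $\KK^{[2]}$-type hyperkähler manifold of degree $2$, with ample class $\ell_i$ in $\bar v_i^\perp$ coming from the positivity lemma. By Theorem \ref{00_main-result-1.2}, the $\Stwo$-action $\Pi$ fixes $\sigma_{t,a}$. Since $\tilde\tau$ acts on $\Ala_1^2=\langle\bar v_1,\bar v_2\rangle$, the action preserves $\bar v_i$. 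Hence $\Pi$ induces a regular involution $\iota_i$ on $\modulis_{\sigma_{t,a}}(\bar v_i)$. The Serre functor of the Enriques category is $\Pi\circ[2]$, so $\Pi$ acts by $-1$ on the symplectic form, and $\iota_i$ is non-symplectic. Its action on $\HH^2\cong\bar v_i^\perp$ is $\tilde\tau|_{\bar v_i^\perp}$ via the Mukai isometry $\theta_{\bar v_i}$.

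\textbf{Step 1: the invariant lattice of $\iota_i$.} Because $\rank\Pic(S)=1$, the algebraic Mukai lattice is $\Mst_{2t}$. The lattice $\bar v_i^\perp\cap\Mst_{2t}$ has rank two, and $\tau$ acts on it. We compute explicitly with the generators $\bar v_1,\bar v_2$ in terms of $t,a$. The expected outcome is that $\tau$ fixes exactly the line spanned by $\ell_i$ (of square $2$) and acts by $-1$ on its orthogonal complement in $\bar v_i^\perp\cap\Mst_{2t}$. On the transcendental part, $\tilde\tau$ acts by $-1$, again because the lifted action is non-symplectic on a Picard rank one surface. So $\iota_i^*$ has invariant lattice $\bz\ell_i$ with $\ell_i^2=2$, which matches the EPW involutions.

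\textbf{Step 2: comparison of Hodge structures.} By \cite{DK18} and the work of Kuznetsov--Perry and Pertusi on Gushel--Mukai Kuznetsov components, there are Hodge isometries
\begin{equation*}
\HH^2(\tilde Y_X,\bz)\cong v^\perp\subset \Mukai(\schc_X)_{\mathrm{alg}}\oplus T,
\end{equation*}
and similarly for $\tilde Y^\vee_X$. Here $v$ runs over the two classes $\lambda_1,\lambda_2$ spanning the canonical $\Ala_1^2\subset\Mukai(\schc_X)$. These classes are the ones transported to $\bar v_1,\bar v_2$ by the equivariant equivalence; the identification of $\Ala_1^2$ with $\langle\bar v_1,\bar v_2\rangle$ is how the $\bar v_i$ were defined. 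Composing these isometries with $\theta_{\bar v_i}$ gives Hodge isometries
\begin{equation*}
\HH^2(\modulis_{\sigma_{t,a}}(\bar v_i),\bz)\cong \HH^2(\tilde Y_X,\bz)\ \text{or}\ \HH^2(\tilde Y^\vee_X,\bz).
\end{equation*}
These isometries send $\ell_i$ to $\pm$ the EPW polarization, since both classes generate the invariant line, and they are compatible with the involutions.

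\textbf{Step 3: Markman's Torelli theorem.} We would apply Markman's Hodge-theoretic Torelli theorem for $\KK^{[2]}$-type manifolds. For $\KK^{[2]}$-type the monodromy group is the orientation-preserving isometries, so after composing with $-\id$ if needed, any Hodge isometry is a parallel transport operator. A parallel transport Hodge isometry that maps an ample class to an ample class is induced by an isomorphism. Both $\ell_i$ and the EPW polarization are ample, so it only remains to fix the sign.

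The main obstacle is the lattice bookkeeping. First, one must check that the equivariant equivalence really carries the classes $\lambda_j$ to $\bar v_i$ rather than to some other vectors. Second, the resulting isometry must be orientation-preserving, or be adjustable by $-\id$ while still sending positive cone to positive cone. The first thing to try is to use that the orientation-reversing ambiguity is absorbed by the choice between $\tilde Y_X$ and $\tilde Y^\vee_X$. This is consistent with the conclusion of the theorem allowing either variety.
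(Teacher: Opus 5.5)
Your plan follows the paper's proof in substance. You transport everything to $S$ through the equivariant equivalence and identify $\HH^2$ of $\modulis_{\sigma_{t,a}}(\bar v_i)$ with $\HH^2(\tilde Y_X,\bz)$ or $\HH^2(\tilde Y^\vee_X,\bz)$ by a Hodge isometry sending the EPW polarization $h$ to $\pm\theta_{\bar v_i}(\bar v_{3-i})$. You then show this invariant class is ample up to sign and conclude by Markman's Hodge-theoretic Torelli theorem. There are two minor differences, and both are fine. The paper gets a parallel-transport isometry from the birational map supplied by the argument of \cite[Proposition 5.17]{PPZ22}, while you use $\mathrm{Mon}^2=\OO^+$ for $\KK^{[2]}$-type. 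The paper gets ampleness from \cite[Proposition 3.5]{Ou18} and the rank-one invariant lattice, while you use the $\Pi$-invariance of the Bayer--Macr\`i class $\ell_{\sigma_{t,a}}$. That is legitimate: $\Pi.\sigma_{t,a}=\sigma_{t,a}$, $\tilde\tau(\bar v_i)=\bar v_i$, and $\sigma_{t,a}$ is $\bar v_i$-generic by the proof of Proposition \ref{13_main-result-4.1}.

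The gap is the step you leave open as the ``main obstacle'', and the fix you propose does not work. Composing with $-\id$ does not change the target, and $\tilde Y_X$, $\tilde Y^\vee_X$ are in general not isomorphic, so nothing is absorbed by switching between them. The missing observation is this. A Hodge isometry $\varphi$ between hyperk\"ahler manifolds sends $\sigma$ into $\bc^\times\sigma'$, so it preserves the orientation of the plane $\langle\Re\sigma,\Im\sigma\rangle$. Hence $\varphi$ is orientation-preserving if and only if it maps the positive cone into the positive cone. So choose the sign with $\varphi(\ell_i)\in\br_{>0}h$. Then $\varphi\in\OO^+$ is a parallel transport operator mapping an ample class to an ample class, and Markman's theorem applies. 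Your two sign conditions are one and the same; this is exactly the paper's remark that the isometry cannot send $h$ to an anti-ample class.

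Your other worry is also immediate. The equivalence intertwines $\psi$ and $\tilde\tau$, so it maps the invariant lattice $\langle\lambda_1,\lambda_2\rangle$ onto $\langle\bar v_1,\bar v_2\rangle$ (Proposition \ref{11_BE-locus-square-2-vector}). The only square-$2$ vectors of $\Ala_1^2$ are $\pm$ the generators, and $\modulis_\sigma(-v)\cong\modulis_\sigma(v)$ via a shift, so the signs are harmless.

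Finally, in Step 2 you should not derive $\varphi(\ell_i)=\pm h$ from ``both classes generate the invariant line''. That presupposes $\varphi$ intertwines the involutions. Moreover, $\bar v_i^\perp\cap\Num(S)$ contains other square-$2$ classes, for instance $x\bar v_{3-i}+y\,(t,-aH,t(n-1))$ with $x^2-ty^2=1$ and $y\neq0$ whenever $t$ is not a square. What you need from the literature is that the comparison isometry $\HH^2(\tilde Y_X,\bz)\cong\lambda_j^\perp$ sends $h$ to $\pm\lambda_{3-j}$. Equivalently, it extends the period identification $h^\perp\cong\HH^4(X,\bz)_{00}$; this is what the paper means by ``by construction''.
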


\begin{remark}
It extends \cite[Proposition 5.17]{PPZ22}. Also, it is expected that the assertion can be generalized for any equivariant equivalence $\schc_X\cong\DbS$.
\end{remark}

The valuable point of our modular description for the double EPW sextics is the explicitness. Since the moduli spaces $\modulis_{\sigma_{t,a}}(\bar{v}_1)$ and $\modulis_{\sigma_{t,a}}(\bar{v}_2)$ are constructed uniformly in the locus $\MBE(2t)_a$, it is possible to spread the double EPW sextic locus in the moduli space $\mathfrak{M}_{\KK^{[2]}}(2)$ of $\KK^{[2]}$-type hyperkähler manifolds with a degree $2$ polarization using Theorem \ref{00_main-result-4.2} and \cite[Theorem 1.1~(2)]{O'Grady06}. 

\begin{conjecture}\label{00_conjecture-1}
Let $S$ be a polarized K3 surface in $\MBE(2t)$ with $t\geq 2$, then the moduli spaces $\modulis_{\sigma_{t,a}}(\bar{v}_1)$ and $\modulis_{\sigma_{t,a}}(\bar{v}_2)$ of semistable objects can be realized as double EPW sextics in the sense of \cite{O'Grady13} for some $a$ satisfying $a^2\equiv-1\pmod{t}$.
\end{conjecture}

\begin{remark}
This conjecture is verified for general K3 surfaces in Proposition \ref{00_main-result-2.1}~\eqref{00_main-result-2.1(1)} in \cite{Liu26b} using \cite{DK24}. Also, the conjecture is partly checked for a general K3 surface in $\MBE(2t)_a$ with $a^2-t=-1$ and $t\geq 10$ by \cite[Corollary 7.6]{DM19}. 
\end{remark}

It is related to the conjecture on the image of the period map for Gushel--Mukai fourfolds, see \cite[Question 9.1]{DIM15}. Moreover, Conjecture \ref{00_conjecture-1} could be seen as evidence for the following stronger conjecture.

\begin{conjecture}
Given a K3 surface $S$ in $\MBE(2t)$ with $t\geq 10$, there exists a first-type Hodge-special Gushel--Mukai fourfold $X$ such that there exists an $\Stwo$-equivariant equivalence $\DbS\cong\schc_X$.
\end{conjecture}

In general, it would be interesting to decide how many Gushel--Mukai fourfolds one can find for a given Bridgeland--Enriques general K3 surface $S$.

\subsection{Organization}
In Section \ref{000_Section-02}, we recall necessary facts about the bounded derived categories of K3 surfaces. In Section \ref{000_Section-11}, we formulate Bridgeland--Enriques general K3 surfaces and make some observations on their loci. In Section \ref{000_Section-12}, we discuss the relation between Bridgeland--Enriques general K3 surfaces of degree $10$ and a categorical degeneration of special Gushel--Mukai threefolds. In Section \ref{000_Section-13}, we explain the connection with Hodge-special Gushel--Mukai fourfolds and associated double EPW sextics. In Appendix \ref{000_Section-A}, we develop some general theory about Enriques categories over K3 surfaces and in particular prove Proposition \ref{A_ppst}.

\subsection{Conventions}
This article works over the field $\bc$ of complex numbers. The imaginary unit is denoted by $\bm{i}$ or $\sqrt{-1}$. A variety is an irreducible and reduced quasi-projective scheme. A variety $X$ is said to be $1$-nodal if its singular locus consists of a single ordinary double point. A K3 surface is smooth and projective.

\section{Derived Categories of K3 surfaces}\label{000_Section-02}
In this section, we will discuss the bounded derived category $\DbS$ of a K3 surface $S$ with a focus on a distinguished connected component $\Stab^{\dag}(S)$ of stability conditions on $\DbS$ and the group $\Aut(\DbS)$ of autoequivalences.

\subsection{The Mukai lattice and derived categories}
Let $S$ be a K3 surface, then there exists a canonical weight-two Hodge structure on 
$$\MukaiZ{S}:=\HH^0(S,\bz)\oplus \HH^2(S,\bz)\oplus \HH^4(S,\bz)$$
such that $\Mukai\,\!^{1,1}(S)=\HH^{0,0}(S)\oplus \HH^{1,1}(S)\oplus \HH^{2,2}(S)$ and $\Mukai\,\!^{0,2}(S)=\HH^{0,2}(S)$. This Hodge structure carries a polarization defined by the Mukai pairing
$$\langle(r_1,\alpha_1,s_1),(r_2,\alpha_2,s_2)\rangle=\alpha_1.\alpha_2-r_1.s_2-r_2.s_1$$
where the products are usual multiplications of cohomological classes. 

Usually, we will simply denote $v^2:=\langle v,v\rangle$ for a Mukai vector $v\in\MukaiZ{S}$.

\begin{theorem}[Orlov \cite{Or97}]\label{02_K3-Orlov}
An (exact) autoequivalence $\Phi$ on $\DbS$ is isomorphic to a Fourier--Mukai transform $\Phi_K$ for a uniquely determined $K\in\cate{D}^b(S\times S)$, and one has a group homomorphism
$$\rho\colon\Aut(\DbS)\rightarrow\OO(\MukaiZ{S}),\quad \Phi_K\mapsto\Phi^{\coho}_K$$
defined by sending $\Phi_K$ to the associated cohomological Fourier--Mukai map.
\end{theorem}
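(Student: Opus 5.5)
The statement has two parts: the representability of autoequivalences by kernels, and the construction of $\rho$ as a group homomorphism into isometries. I would treat them separately.

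\textbf{Representability.} I would follow Orlov's strategy for fully faithful exact functors $\Phi\colon\DbS\rightarrow\DbS$ between derived categories of smooth projective varieties.

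1. Fix an ample line bundle $\sho_S(1)$. Then $\{\sho_S(-n)\}_{n\geq 0}$ is an ample sequence, so $\DbS$ is controlled by $\Phi$ on these objects and the morphisms between them.
2. On $S\times S$, resolve the diagonal $\sho_\Delta$ by a (right-infinite) complex with terms $\sho(-n)\boxtimes\sho(-n)^{\vee}$-type sheaves, as in a Beilinson-type resolution obtained from the ample sequence.
3. Apply $\Phi$ in the second factor. Full faithfulness shows that the resulting complex $\Phi(\sho(-n))\boxtimes\sho(-n)^\vee$ has vanishing negative Ext's between terms. Hence it admits a (unique up to isomorphism) convolution, after truncating to a finite piece of length exceeding $2\dim S$.
4. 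The convolution defines an object $K\in\cate{D}^b(S\times S)$.
5. Check that $\Phi_K$ and $\Phi$ agree on the ample sequence, together with the natural maps between its members. The ample-sequence criterion then upgrades this to an isomorphism of functors $\Phi\cong\Phi_K$.

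Uniqueness of $K$ for an equivalence: if $\Phi_K\cong\Phi_{K'}$, compose with the quasi-inverse, which is the Fourier--Mukai transform with the adjoint kernel. This reduces to showing that a kernel inducing the identity is isomorphic to $\sho_\Delta$. That follows by evaluating on skyscraper sheaves $k(x)$ and using that the kernel is then a sheaf flat over the first factor, supported on the graph of the identity.

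\textbf{The homomorphism $\rho$.} For $K$ I would set $v(K)=\ch(K)\sqrt{\td_{S\times S}}$ and define $\Phi_K^{\coho}(\alpha)=q_*(p^*\alpha\cdot v(K))$.

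1. \emph{Compatibility with convolution.} By Grothendieck--Riemann--Roch and the projection formula, $\Phi^{\coho}_{K'*K}=\Phi^{\coho}_{K'}\circ\Phi^{\coho}_K$. Since $\Phi_{\sho_\Delta}^{\coho}=\id$, $\rho$ is multiplicative and sends equivalences to invertible maps.
2. \emph{Hodge structure.} $\Phi^{\coho}_K$ respects the Hodge structure, since $v(K)$ is an algebraic class.
3. \emph{Isometry.} Use $\chi(E,F)=-\langle v(E),v(F)\rangle$ on a K3 surface, together with the fact that the adjoint kernel induces the adjoint map with respect to the Mukai pairing. Because $\Phi_K$ is an equivalence, its adjoint is its inverse, so $\Phi_K^{\coho}$ preserves $\langle-,-\rangle$ after extending scalars to $\bq$.
4. \emph{Integrality.} It remains to show $\Phi^{\coho}_K$ preserves the lattice $\MukaiZ{S}$. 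I would follow Mukai: $\sqrt{\td_S}=(1,0,1)$ is integral, and the relevant components of $v(K)$ have integral Künneth pieces because $c_1^2$ is even on a K3 surface. If that direct computation is awkward, the fallback is to note that $\Phi^{\coho}_K$ preserves the image of topological $K$-theory, which equals $\MukaiZ{S}$ via the Mukai vector.

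\textbf{Main obstacle.} I expect the main difficulty to be the existence part of representability, namely constructing the convolution and proving that the resulting functor agrees with $\Phi$ on all of $\DbS$, not merely on the ample sequence. This is where the vanishing of negative Ext's from full faithfulness and the ample-sequence criterion have to be used carefully. The cohomological part is comparatively formal once GRR and Mukai's integrality are available.
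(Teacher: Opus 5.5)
The paper gives no proof of this statement; it is quoted from \cite{Or97}, with the cohomological part going back to Mukai. Your outline follows the standard argument from those sources. For representability that means a convolution of a transformed resolution of the diagonal, followed by the ample-sequence extension criterion. For $\rho$ it means Grothendieck--Riemann--Roch, adjointness of cohomological transforms for the Mukai pairing, and the topological $K$-theory argument for integrality. The cohomological half is fine. So is your uniqueness reduction, except that you should add one step: a sheaf on the diagonal with skyscraper fibres is $\Delta_*M$ for a line bundle $M$, and $\Phi_{\Delta_*M}\cong\id$ forces $M\cong\Phi_{\Delta_*M}(\sho_S)\cong\sho_S$.

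There is, however, a step in the representability part that fails as written: Step 4, where you take the convolution of the truncated transformed complex to be the kernel. As your choice of length $>2\dim S$ suggests, the truncated resolution $C_n\to\cdots\to C_0$ is isomorphic in $\cate{D}^b(S\times S)$ to $\sho_\Delta\oplus\mathcal{K}[n]$. Here $\mathcal{K}$ is the syzygy sheaf at the cut, and the splitting holds because the extension class lies in $\Ext^{n+1}(\sho_\Delta,\mathcal{K})=0$ on the fourfold $S\times S$. So already for $\Phi=\id$ your recipe returns $G=\sho_\Delta\oplus\mathcal{K}[n]$. Then $\Phi_G\cong\id\oplus\Phi_{\mathcal{K}}[n]$ is not the identity when $\mathcal{K}\neq0$, and Step 5 cannot succeed for this object.

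The missing idea in Orlov's argument is to split off the tail. One shows that $G\cong K\oplus K'[n]$, where the two summands have cohomology in ranges about $n$ apart; this uses the bounded amplitude of $\Phi$. For $n\gg0$ the splitting is then forced by the vanishing of high-degree $\Ext$ groups on $S\times S$. The kernel is the summand $K$, and it is for this $K$ that one builds the isomorphism $\Phi_K|_{\mathcal{C}}\cong\Phi|_{\mathcal{C}}$ on the ample sequence $\mathcal{C}$ before invoking the extension criterion.

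Two smaller corrections. First, the terms of the resolution cannot literally be sums of $\sho(-n)\boxtimes\sho(n)$, since $\Hom(\sho(-n)\boxtimes\sho(n),\shi_\Delta)=0$ for every $n$. One uses terms of the form $\sho(-i)\boxtimes B_i$ with $B_i$ locally free. Second, in Step 3 you apply $\Phi$ to the first factor after announcing the second. What actually matters is that the differentials lie in the K\"unneth component $\Hom(\cdot,\cdot)\otimes\Hom(\cdot,\cdot)$. Then applying $\Phi$ to one tensor factor is compatible with composition and still yields a complex with no negative $\Ext$'s between its terms.
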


In analogy with the notions of symplectic and non-symplectic automorphisms on K3 surfaces, one has the following definitions.

\begin{definition}
An autoequivalence $\Phi$ on $\DbS$ is called \defi{symplectic} if the Hodge isometry $\phi=\rho(\Phi)$ preserves the symplectic class in $\Mukai^{0,2}(S)$. Otherwise, the auto-equivalence is said to be \defi{non-symplectic}.
\end{definition}

The Mukai vector of an object $E$ in $\DbS$ is defined as
$$v(E)=\ch(E).\sqrt{\td(S)}=(\rank(E),c_1(E),\rank(E)+\frac{c_1(E)^2}{2}-c_2(E))$$
so that $\chi(E_1,E_2)=-\langle v(E_1),v(E_2)\rangle$ for any objects $E_1,E_2$ in $\DbS$. It realizes
$$\Num(S)=\HH^0(S,\bz)\oplus \NS(S)\oplus \HH^4(S,\bz)$$
as a sub-Hodge-space of the polarized weight-two Hodge space $\MukaiZ{S}$. The group homomorphism $\rho$ in Theorem \ref{02_K3-Orlov} then restricts to $\Aut(\DbS)\rightarrow\OO(\Num(S))$.

The lattice $\MukaiZ{S}$ has signature $(4,20)$ under the Mukai pairing, and one has a group $\OO^+(\MukaiZ{S})$ containing the Hodge isometries of $\MukaiZ{S}$ preserving an orientation of a positive four-space in $\MukaiZ{S}$. Similarly, one has $\OO^+(\Num(S))$.

\begin{theorem}[Huybrechts--Macrì--Stellari \cite{HMS09}]
The image of $\rho$ in Theorem \ref{02_K3-Orlov} is the subgroup $\OO^+(\MukaiZ{S})\subset \OO(\MukaiZ{S})$ and the restriction $\rho\colon\Aut(\DbS)\rightarrow\OO(\Num(S))$ is a surjection onto the subgroup $\OO^+(\Num(S))\subset\OO(\Num(S))$.
\end{theorem}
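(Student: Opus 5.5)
The plan is to treat the two assertions separately: first identify the image of $\rho$ on $\MukaiZ{S}$, then deduce the statement for $\Num(S)$ by a lattice-extension argument.

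\textbf{The image contains $\OO^+(\MukaiZ{S})$.} I would exhibit enough autoequivalences whose cohomological actions generate $\OO^+(\MukaiZ{S})$:
\begin{enumerate}
\item the shift $[1]$, acting by $-\id$, which preserves the orientation of a positive four-space since $\det(-\id)=1$ there;
\item tensor products with line bundles, acting by $\exp(L)$;
\item automorphisms of $S$, realising Hodge isometries of $\HH^2(S,\bz)$ preserving the ample cone (global Torelli);
\item the spherical twist $T_{\sho_S}$, acting by the reflection in $(1,0,1)$;
\item Fourier--Mukai transforms to fine moduli spaces of sheaves (Mukai--Orlov).
\end{enumerate}
Orlov's derived Torelli theorem (Theorem \ref{02_K3-Orlov} together with Mukai's construction) shows that any Hodge isometry of $\MukaiZ{S}$ is realised up to composition with $-\id_{\HH^2}$. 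A direct check that each generator above is orientation preserving then shows that these actions generate $\OO^+(\MukaiZ{S})$.

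\textbf{The image is contained in $\OO^+(\MukaiZ{S})$.} This is the hard direction, and I expect it to be the main obstacle, since $-\id_{\HH^2}$ is a Hodge isometry and one must rule out that it lifts. I would follow the deformation strategy. Spread a kernel $K$ of an autoequivalence over a formal (or twisted) deformation of $(S,S)$ along which $\Phi^{\coho}_K$ stays a Hodge isometry, so that $K$ deforms, as in Hochschild-cohomology obstruction theory. Choose the deformation direction so that the generic fibre is a K3 category with $\Num$ trivial or very small. On that generic fibre, autoequivalences are generated by shifts and spherical twists, all orientation preserving. Orientation preservation is a discrete invariant, so it transfers back to the special fibre. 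An alternative first attempt would be to argue via the action on Bridgeland's component $\Stab^\dag(S)$, which is simply connected over $\mathcal P_0^+$. However, that requires knowing that every autoequivalence preserves $\Stab^\dag(S)$, which is not available in general, so I would rely on deformation.

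\textbf{Restriction to $\Num(S)$.} The restriction lands in $\OO^+(\Num(S))$ because a positive four-space may be chosen spanned by the positive two-plane of $\Num(S)\otimes\br$ together with $\mathrm{Re}\,\sigma$ and $\mathrm{Im}\,\sigma$. For surjectivity, let $g\in\OO^+(\Num(S))$ and write $T$ for the transcendental lattice, so that $\MukaiZ{S}$ is an overlattice of $\Num(S)\oplus T$ glued along the discriminant groups. I would extend $g$ by $\pm\id_T$; this extension is a Hodge isometry precisely when the induced action $\bar g$ on $A_{\Num(S)}$ is $\pm\id$. Since $\Num(S)$ contains the hyperbolic plane $\HH^0\oplus\HH^4$, Eichler/Nikulin-type results let me modify $g$ by elements of $\OO^+(\Num(S))$ already known to lift, namely the $\exp(L)$, twists and shifts. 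The aim is to reduce to an isometry that acts trivially on $A_{\Num(S)}$ and then apply the extension. Checking that this reduction is possible in general is the second delicate point of the argument. The choice of sign on $T$ is then fixed so that the extended isometry lies in $\OO^+$, and the first part of the argument shows it lies in the image of $\rho$.
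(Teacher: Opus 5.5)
The paper does not prove this statement; it quotes it from \cite{HMS09}. Your treatment of the Mukai lattice follows the standard route at sketch level. That means Mukai--Orlov, Ploog and Hosono--Lian--Oguiso--Yau for $\OO^+(\MukaiZ{S})\subseteq\mathrm{im}\,\rho$, and the deformation argument of \cite{HMS09} for the reverse inclusion. The genuine gap is in your last step, the restriction to $\Num(S)$, and it cannot be closed as you plan.

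\textbf{The reduction on $A_{\Num(S)}$ is impossible in general.} Every $\rho(\Phi)$ is a Hodge isometry preserving $\Num(S)$ and $T(S)=\Num(S)^{\perp}$. So its action on $A_{\Num(S)}\cong A_{T(S)}$ is the discriminant action of some Hodge isometry of $T(S)$. The elements you want to compose with cannot escape this. $\exp(L)$ and spherical twists act trivially on $T(S)$, hence trivially on $A_{\Num(S)}$, and shifts act by $-1$. So if $\bar g$ is not induced by a Hodge isometry of $T(S)$, no such modification makes it trivial; in fact $g$ is then not in the image at all.

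This really happens. Take $S$ of Picard rank one and degree $20$.
Then $A_{\Num(S)}\cong\bz/20$, generated by $H/20$ with $q(H/20)=1/20$. Multiplication by $9$ preserves $q$, because $81\equiv 1\pmod{40}$. Since $\Num(S)$ contains the hyperbolic plane $\HH^0\oplus\HH^4$, Nikulin's surjectivity of $\OO(\Num(S))\to\OO(A_{\Num(S)})$ gives some $g$ inducing $9$. Composing with the reflection in the $(+2)$-vector $(1,0,-1)$, which acts trivially on $A_{\Num(S)}$, makes $g$ orientation preserving. But $T(S)$ has odd rank $21$, so its only Hodge isometries are $\pm\id$, while $9\not\equiv\pm1\pmod{20}$. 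Hence $g$ is an orientation-preserving isometry of $\Num(S)$ that is not the restriction of any Hodge isometry of $\MukaiZ{S}$. Surjectivity onto \emph{all} orientation-preserving isometries of $\Num(S)$ is therefore false.

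What is true, and presumably what the paper's ``similarly'' intends, is surjectivity onto the orientation-preserving isometries of $\Num(S)$ that extend to Hodge isometries of $\MukaiZ{S}$. These are the $g$ with $\bar g=\bar h$ for some Hodge isometry $h$ of $T(S)$. For such $g$ no reduction is needed:
\begin{itemize}
\item Glue $g$ and $h$ to a Hodge isometry $\tilde g$ of $\MukaiZ{S}$.
\item $h$ acts on the plane spanned by $\mathrm{Re}\,\sigma,\mathrm{Im}\,\sigma$ as multiplication by a unit complex number. So $\tilde g$ preserves the orientation of the positive four-space if and only if $g$ preserves that of the positive plane in $\Num(S)_{\br}$.
\item Hence $\tilde g\in\OO^+(\MukaiZ{S})$, and your first step applies.
\end{itemize}
This also shows that your remark about choosing the sign on $T(S)$ to fix the orientation is off. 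Both $\pm\id_{T(S)}$ preserve the orientation of the $\sigma$-plane, and the sign is dictated by $\bar g$, not by orientation.

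Smaller points. In step two, the claim that autoequivalences of the generic fibre are generated by shifts and spherical twists is unsupported and is not the mechanism in \cite{HMS09}. There the generic side is handled via stability conditions. Two further steps are substantive rather than formal: matching the deformation directions on source and target (the target direction is forced by $\Phi$), and comparing orientations between the special and generic fibres. In step one you also need the twists $\funct{T}_{\sho_C(-1)}$ along $(-2)$-curves to absorb the Weyl group in global Torelli. Showing that the Fourier--Mukai transforms to moduli spaces preserve orientation requires an argument, not just a direct check.
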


The behavior of an autoequivalence on $\DbS$ is also controlled by its action on the so-called spherical objects due to \cite[Appendix A]{Hu12}.

\begin{definition}
An object $E$ in $\DbS$ is \defi{spherical} if $\deriver\Hom(E,E)\cong\bc\oplus\bc[-2]$ and a Mukai vector $v$ is said to be \emph{spherical} if one has $v^2=-2$.
\end{definition}

A further study of the homomorphism $\rho$ has been carried out in \cite{Hu16}, using stability conditions on K3 surfaces.

\subsection{Stability conditions on K3 surfaces}
The notion of stability conditions is introduced in \cite{Bri07}, and stability conditions on K3 surfaces are intensively studied in \cite{BM14,BM14-MMP,Bri08}. Here we will adapt the formulation in \cite[Appendix 1]{BMS16} based on the locally-finite full stability conditions in the sense of \cite{Bri07,Bri08}.

\begin{definition}
Let $\schd$ be a triangulated category. A \defi{slicing} on $\schd$ is a collection of full subcategories $\schp(\phi)\subset\schd$ for all $\phi\in\br$, such that
\begin{itemize}
	\item $\schp(\phi+1)=\schp(\phi)[1]$ and $\Hom(\schp(\phi_1),\schp(\phi_2))=0$ for any $\phi_1>\phi_2$;
	\item every object $E$ of $\schd$ admits a Harder--Narasimhan filtration
\begin{displaymath}
	0= \xymatrix{
		E_0\ar[r]^{f_1}&E_1\ar[r]^{f_2}&E_2\ar[r]^{f_3}&\cdots\ar[r]^{f_{n-1}}&E_{n-1}\ar[r]^{f_n}&E_n
	} =E
\end{displaymath}
	with $A_i:=\Cone(f_i)$ in $\schp(\phi_i)$ for all $i$. 
\end{itemize}
The category $\schp(\phi)$ is quasi-abelian. The subcategory $\schp((0,1])\subset\schd$ generated by the subcategories $\schp(\phi)$ for $\phi\in(0,1]$ by extension is the heart of a bounded $t$-structure on $\schd$, and is called the \emph{heart} of this slicing.
\end{definition}

The Harder--Narasimhan filtration for an object $E$ in $\schd$ is unique up to a unique isomorphism. In this case, the objects $A_i$ are called the \emph{factors} of $E$. 

\begin{definition}
Let $\schd$ be a triangulated category and let $v\colon\Kum(\schd)\rightarrow \Lambda$ be a group homomorphism from the Grothendieck group of $\schd$ to some lattice $\Lambda$; a \defi{stability condition} on $\schd$ with respect to $v$ is a pair $\sigma=(\schp,Z)$ where $\schp$ is a slicing on $\schd$ and $Z$ is a linear map $\Lambda\rightarrow\bc$, called the \emph{central charge}, such that
\begin{itemize}
		\item one has $Z(v(E))\in\br_{>0}\cdot\exp(\pi\phi\bm{i})$ for any $0\neq E\in\schp(\phi)$;
	\item there exists a quadratic form $Q$ on the vector space $\Lambda_{\br}$ such that $\ker(Z_{\br})$ is negative definite with respect to $Q$, and one has $Q(v(E))\geq0$ for any $E\in\schp(\phi)$ and any $\phi\in\br$.
\end{itemize}
In this case, the quasi-abelian category $\schp(\phi)$ is indeed abelian. A non-zero object of $\schd$ is called \defi{semistable} of phase $\phi$ with respect to $\sigma$ once it belongs to $\schp(\phi)$ and is called \defi{stable} of phase $\phi$ with respect to $\sigma$ once it is simple in $\schp(\phi)$.
\end{definition}

The abelian category $\schp(\phi)$ has finite length (see e.g.~\cite[page 330]{Bri07}) for a stability condition $\sigma=(\schp,Z)$, so a non-zero object in $\schp(\phi)$ admits a Jordan--Hölder filtration with uniquely determined Jordan--Hölder factors. Two semistable objects are called \emph{$S$-equivalent} if they have the same Jordan--Hölder factors.

\begin{theorem}[Bridgeland \cite{Bri08}]\label{02_K3-Bridgeland}
Consider a K3 surface $S$, two classes $\omega$ and $\beta$ in $\NS(S)_{\br}$ with $\omega$ ample, the linear function $Z_{\omega,\beta}\colon \Num(S)\rightarrow\bc$ such that
$$Z_{\omega,\beta}(r,\Delta,s)= \frac{1}{2}(2\beta\cdot\Delta-2s+r(\omega^2-\beta^2))+(\Delta-r\beta)\cdot\omega\bm{i}$$
and the abelian category $\scha_{\omega,\beta}$ defined by tilting $\cate{Coh}(S)$ at $\beta\cdot\omega$ with respect to the slope function $\mu_{\omega}$ associated with $\omega$. Then there exists a stability condition $\sigma_{\omega,\beta}$ with respect to the Mukai vector $v\colon\Kum(S)\rightarrow\Num(S)$ having central charge $Z_{\omega,\beta}$ and heart $\scha_{\omega,\beta}$ if and only if $Z_{\omega,\beta}(v(\shf))\notin\br_{\leq0}$ for any spherical sheaf $\shf$ on $S$.
\end{theorem}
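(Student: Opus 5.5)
The plan is to verify directly that the pair $(Z_{\omega,\beta},\scha_{\omega,\beta})$ defines a stability condition when the spherical condition holds, and to show that the condition is forced otherwise.

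Recall that $\scha_{\omega,\beta}$ is generated by two kinds of objects: sheaves whose torsion-free part has all $\mu_\omega$-HN slopes $>\beta\cdot\omega$ (torsion sheaves included), and shifts $F[1]$ of torsion-free sheaves whose $\mu_\omega$-HN slopes are all $\leq\beta\cdot\omega$.

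\emph{Stability function.} From the formula, $\operatorname{Im}Z_{\omega,\beta}(E)\geq0$ for every $E\in\scha_{\omega,\beta}$. The only objects with vanishing imaginary part are extensions of two kinds:
\begin{itemize}
\item zero-dimensional sheaves, for which $Z=-s<0$;
\item objects $F[1]$ with $F$ torsion-free and $\mu_\omega$-semistable of slope exactly $\beta\cdot\omega$.
\end{itemize}
For $F$ of the second kind, passing to Jordan--H\"older factors, I may assume $F$ is $\mu_\omega$-stable. For $v=v(F)=(r,\Delta,s)$ with $r>0$ I will use the identity
$$\operatorname{Re}Z_{\omega,\beta}(v)=\frac{1}{2r}\Bigl(v^2-(\Delta-r\beta)^2+r^2\omega^2\Bigr).$$
Here $(\Delta-r\beta)\cdot\omega=0$, so the Hodge index theorem gives $-(\Delta-r\beta)^2\geq0$. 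Since $F$ is $\mu$-stable it is simple, so $\chi(F,F)=2-\dim\Ext^1(F,F)$, and hence $v^2\geq-2$, with equality exactly when $F$ is spherical. If $v^2\geq0$, then $\operatorname{Re}Z(F)>0$, so $Z(F[1])\in\br_{<0}$ as required. If $v^2=-2$, then $F$ is a spherical sheaf, and the hypothesis $Z(v(F))\notin\br_{\leq0}$ together with $\operatorname{Im}Z(F)=0$ gives $\operatorname{Re}Z(F)>0$ again. Therefore $Z_{\omega,\beta}$ maps $\scha_{\omega,\beta}\setminus\{0\}$ into the semi-closed upper half plane minus $\br_{\geq0}$.

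\emph{HN property and support property.} First I treat the case of rational $\omega,\beta$. Then $\operatorname{Im}Z$ takes discrete values, and $\scha_{\omega,\beta}$ is noetherian as a tilt of $\cate{Coh}(S)$. The standard Bridgeland argument (no infinite chains of subobjects or quotients with strictly monotone phase) then gives HN filtrations. For the support property I take $Q$ to be a combination of the Mukai pairing $v^2+2$ with a term built from $\operatorname{Im}Z$, chosen so that $\ker Z$ is negative definite. Stable objects satisfy $v^2\geq-2$, and only finitely many spherical classes have $|Z|$ bounded, which gives the uniform bound. General real $(\omega,\beta)$ are then reached via Bridgeland's deformation theorem. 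The support property guarantees that deforming the central charge stays in a connected family. One checks that the heart remains $\scha_{\omega,\beta}$ along paths that avoid the walls where $Z(v(\shf))\in\br_{\leq0}$ for a spherical sheaf $\shf$.

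\emph{Converse.} Suppose $\shf$ is a spherical sheaf with $Z(v(\shf))\in\br_{\leq0}$. I want to produce a $\mu$-stable spherical torsion-free sheaf $F$ of slope $\beta\cdot\omega$ with $\operatorname{Re}Z(F)\leq0$. Then $F[1]\in\scha_{\omega,\beta}$ and $Z(F[1])\in\br_{\geq0}$, which is impossible for a stability condition with this heart. I expect this reduction to be the main obstacle. I would follow Mukai's lemma that the HN factors and stable factors of a rigid sheaf are rigid, so their Mukai vectors have $v^2<0$ and are therefore spherical. I would then argue, using the identity for $\operatorname{Re}Z$ above, that some stable factor has slope exactly $\beta\cdot\omega$ and nonpositive real part of $Z$.
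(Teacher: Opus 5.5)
The paper gives no proof of this statement; it quotes \cite{Bri08}. Measured against Bridgeland's argument, your check that $Z_{\omega,\beta}$ is a stability function on $\scha_{\omega,\beta}$ is correct and is essentially his. So is the rational Harder--Narasimhan step, with one correction: what makes $\scha_{\omega,\beta}$ noetherian is the discreteness of $\operatorname{Im}Z_{\omega,\beta}$ for rational data, not tilting as such, since tilts at irrational slopes need not be noetherian.

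\textbf{The converse has a genuine gap.} The hypothesis gives you an arbitrary spherical sheaf $\shf$ with $Z(v(\shf))\in\br_{\leq0}$. It has positive rank, since rank-zero spherical sheaves have $\operatorname{Im}Z=c_1\cdot\omega>0$, but nothing forces it to be $\mu_\omega$-semistable. If it is not, its $\mu_\omega$-HN factors have pairwise distinct slopes averaging to $\beta\cdot\omega$. In general no HN factor, hence no stable factor, has slope $\beta\cdot\omega$. All of them then have $\operatorname{Im}Z\neq0$, and your formula for $\operatorname{Re}Z$ says nothing about them. Nor does $\shf$ itself give an object of $\scha_{\omega,\beta}$ with $Z\in\br_{\geq0}$. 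Split $0\to\shf_{>}\to\shf\to\shf_{\leq}\to0$ along the torsion pair: $\shf_{>}$ and $\shf_{\leq}[1]$ lie in $\scha_{\omega,\beta}$ with $Z$ in the upper half plane, and $Z(\shf)=Z(\shf_{>})-Z(\shf_{\leq}[1])\in\br_{\leq0}$ is compatible with $Z$ being a stability function.

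This case does occur. Suppose $D=c_1(L_1)-c_1(L_2)$ has $D^2=-6$, $h^0(\pm D)=0$ and $D\cdot\omega_0>0$, for instance when $\NS(S)\cong\langle2\rangle\oplus\langle-6\rangle$. Then the non-split extension $\shf$ of $L_2$ by $L_1$ is spherical and $\mu_{\omega_0}$-unstable. Taking $\beta=c_1(\shf)/2$ and $\omega=\lambda\omega_0$ with $\lambda$ small gives $Z(\shf)=\frac14(-2+4\lambda^2\omega_0^2)<0$.

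The missing input is Kuleshov's existence theorem (see also Mukai and Yoshioka): every class $\delta$ with $\delta^2=-2$ and positive rank is the Mukai vector of a rigid $\mu_\omega$-semistable torsion-free sheaf $E$. For irrational $\omega$, perturb to a nearby rational class and use local finiteness of $\mu$-walls and closedness of semistability. With $\delta=v(\shf)$ the converse is immediate: $\operatorname{Im}Z(E)=0$ forces $\mu_\omega(E)=\beta\cdot\omega$, so $E[1]\in\scha_{\omega,\beta}$ with $Z(E[1])=-Z(\shf)\in\br_{\geq0}$. No factor analysis is needed. Separately, Mukai's lemma needs Hom-vanishing between the two pieces, so it gives rigidity of HN factors but not directly of Jordan--H\"older factors. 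Also, a rigid sheaf is spherical only if it is simple.

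\textbf{The passage from rational to real $(\omega,\beta)$ is asserted rather than proved.} Deforming $\sigma_{\omega_0,\beta_0}$ along a path does produce stability conditions with central charge $Z_{\omega,\beta}$. But ``the heart remains $\scha_{\omega,\beta}$'' is exactly the non-trivial content. In Bridgeland's argument it rests on two facts:
\begin{enumerate}
\item stability of all skyscraper sheaves $\sho_x$ is an open condition, because walls for the class $(0,0,1)$ are locally finite;
\item a stability condition with central charge $Z_{\omega,\beta}$ in which every $\sho_x$ is stable of phase one must have heart $\scha_{\omega,\beta}$.
\end{enumerate}
You also need the local deformations to be controlled uniformly along the path, for example by a support-property constant that is locally uniform in $(\omega,\beta)$.

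\textbf{Your support-property form cannot be built from $\operatorname{Im}Z$ alone.} For $\beta=0$ and $\omega^2>2$, the object $\sho_S[1]$ is stable of phase one with $v^2=-2$ and $\operatorname{Im}Z=0$. Take $Q(v)=v^2+C|Z(v)|^2$, with $C$ supplied by your finiteness observation. Checking $Q\geq0$ on stable objects suffices: $Q$ is negative definite on $\ker Z$, so $\{Q\geq0\}\cap Z^{-1}(\rho)$ is a convex cone for every ray $\rho\subset\bc$.
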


Due to \cite{Bay19,Bri07}, the set $\Stab(S)$ of stability conditions on $\DbS$ with respect to the Mukai vector $v$ carries a complex manifold structure. It is also shown in \cite{Bri08} that $\sigma_{\omega,\beta}$ are in the same connected component in $\Stab(S)$, denoted by $\Stab^{\dag}(S)$.

\begin{remark}\label{02_K3-Bridgeland-remark}
The topology of $\Stab^{\dag}(S)$ is given by the local isomorphism
$$\Stab^{\dag}(S)\rightarrow\Hom(\Num(S),\bc),\quad\sigma=(\schp,Z)\mapsto Z$$
according to \cite{Bay19,Bri07}. The image of this morphism is denoted by $\mathfrak{P}_0^+(S)$.
\end{remark}

\subsection{Autoequivalences and stability conditions}
There are two mutually commutative group actions on $\Stab(S)$ according to \cite[Lemma 8.2]{Bri07}.

The group $\Aut(\DbS)$ acts on $\Stab(S)$ on the left, through 
$$\Phi. (\schp, Z):= (\schp', Z \circ \Phi_*^{-1})  $$
where $\Phi_*=\Phi^{\coho}$ is the isometry on $\Num(S)$ and $\schp'$ is defined by $\schp'(\phi):=\Phi(\schp(\phi))$.

The universal cover $\grp$ of the group $\GLp(2,\br)$ acts on the right,
 via 
$$
(\schp, Z) . (M, f):= (\schp(f(\phi)), M^{-1} \circ Z). 
$$ 
where we use $\bc=\br\oplus\br\bm{i}$ to validate the composite $M^{-1}\circ Z\colon \Num(S)\rightarrow\bc$.

The $\grp$-action preserves $\Stab^{\dag}(S)$, while we only know that $\Stab^{\dag}(S)$ is invariant under the $\Aut(\DbS)$-action if $S$ has Picard rank one \cite{BB17}. On the other hand, the following statement always holds.

\begin{theorem}[Huybrechts \cite{Hu16}]\label{02_K3-Hu-conway}
Consider a K3 surface $S$ and a stability condition $\sigma=(\schp,Z)$ in $\Stab^{\dag}(S)$, then the following groups
\begin{align*}
	\Aut(\DbS,\sigma)&:=\{\Phi\in\Aut(\DbS)\,|\,\Phi.\sigma=\sigma\}\\
	\OO^+(\MukaiZ{S},Z)&:=\{\phi\in\OO^+(\MukaiZ{S})\,|\,Z\circ\phi=Z\}
\end{align*}
are finite, and $\rho$ in Theorem \ref{02_K3-Orlov} induces an isomorphism between them.
\end{theorem}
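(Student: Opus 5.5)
The plan is to split the statement into three parts: finiteness of $\OO^+(\MukaiZ{S},Z)$, injectivity of $\rho$ on $\Aut(\DbS,\sigma)$, and surjectivity onto $\OO^+(\MukaiZ{S},Z)$. Finiteness of $\Aut(\DbS,\sigma)$ then follows from the isomorphism.

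\emph{Finiteness.} Via the Mukai pairing, $Z$ corresponds to a vector $\Omega_Z\in\Num(S)_{\bc}$. Since $Z\in\mathfrak{P}_0^+(S)$, the real and imaginary parts of $\Omega_Z$ span a positive definite plane $P\subset\Num(S)_{\br}$. The real and imaginary parts of the symplectic class span a positive plane $P'\subset T(S)_{\br}$. A Hodge isometry $\phi$ with $Z\circ\phi=Z$ fixes $P$ pointwise. It also preserves $\Mukai^{2,0}(S)$, so it preserves $P'$ and acts on it by a rotation. Hence $\phi$ preserves the negative definite complement $N=(P\oplus P')^{\perp}$. So $\OO^+(\MukaiZ{S},Z)$ lies in the compact group $\OO(P')\times\OO(N)$, and being contained in the discrete group $\OO(\MukaiZ{S})$, it is finite.

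\emph{Injectivity.} Let $\Phi\in\Aut(\DbS,\sigma)$ with $\rho(\Phi)=\id$. Since $\Phi.\sigma=\sigma$, $\Phi$ maps the connected component $\Stab^{\dag}(S)$ to itself. By Remark \ref{02_K3-Bridgeland-remark} and Bridgeland's description of $\Stab^{\dag}(S)\to\mathfrak{P}_0^+(S)$ as a covering map, a cohomologically trivial autoequivalence preserving $\Stab^{\dag}(S)$ acts as a deck transformation. Deck transformations of a connected covering that have a fixed point are the identity, so $\Phi$ fixes every point of $\Stab^{\dag}(S)$. In particular $\Phi$ fixes the geometric stability conditions $\sigma_{\omega,\beta}$ of Theorem \ref{02_K3-Bridgeland}. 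For these, the skyscraper sheaves $k(x)$ are exactly the stable objects of class $(0,0,1)$ and phase $1$. Therefore $\Phi$ permutes skyscraper sheaves, and by the standard argument (Bridgeland--Maciocia) $\Phi\cong f_*(L\otimes-)$ for an automorphism $f$ and a line bundle $L$. Acting trivially on $\MukaiZ{S}$ forces $L\cong\sho_S$ and $f^*=\id$ on $\HH^2$, hence $f=\id$ by the Torelli theorem. So $\Phi\cong\id$.

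\emph{Surjectivity.} Given $\phi\in\OO^+(\MukaiZ{S},Z)$, the Huybrechts--Macrì--Stellari theorem gives $\Phi$ with $\rho(\Phi)=\phi$. I would take $\Phi$ from their explicit construction, as a composition of spherical twists, line bundle twists, automorphisms and Fourier--Mukai transforms to fine moduli spaces of stable sheaves. Each of these is known to send $\Stab^{\dag}$ to the distinguished component, so $\Phi.\sigma\in\Stab^{\dag}(S)$. Its central charge is $Z\circ\phi^{-1}=Z$, so $\Phi.\sigma$ and $\sigma$ lie in the same fibre of the covering over $\mathfrak{P}_0^+(S)$. Bridgeland's theorem identifies the deck group with a subgroup of $\ker\rho$ acting transitively on fibres. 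Hence there is $T\in\ker\rho$ with $T\Phi.\sigma=\sigma$, and then $T\Phi\in\Aut(\DbS,\sigma)$ satisfies $\rho(T\Phi)=\phi$.

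The main obstacle is the component issue in this last step: ensuring $\Phi.\sigma\in\Stab^{\dag}(S)$ without knowing that all autoequivalences preserve $\Stab^{\dag}(S)$. This is known only in Picard rank one \cite{BB17}. I would handle it by the generator-by-generator check sketched above. The Fourier--Mukai transforms to moduli spaces are the delicate generators, and for them one argues via large volume limits that the image of a geometric stability condition is again geometric up to the $\grp$-action.
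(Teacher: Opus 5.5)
The paper gives no proof of this statement and only cites Huybrechts. Your outline is correct and is the standard argument behind that citation: finiteness because the group is discrete and lies in the compact stabiliser of the positive four-space, injectivity from the free action of Bridgeland's deck group, and surjectivity by lifting $\phi$ to an autoequivalence preserving $\Stab^{\dag}(S)$ and then correcting by a deck transformation.

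The only point to phrase more carefully is the claim that the lift preserves $\Stab^{\dag}(S)$. You need this only for the specific functors in the lifting construction, not for arbitrary spherical twists. That construction is due to Mukai, Orlov, Hosono--Lian--Oguiso--Yau and Ploog; HMS proved the inclusion into $\OO^+$, not the construction. It uses only:
\begin{itemize}
\item $\funct{T}_{\sho_S}=\Phi_{\shi_\Delta[1]}$;
\item twists by $\sho_C(k)$ for smooth rational curves $C$;
\item line bundles and automorphisms;
\item transforms from fine moduli spaces of stable sheaves.
\end{itemize}
All of these are covered by Bridgeland's analysis of the boundary of the geometric chamber and by \cite[Proposition 4.2]{Hu08}. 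The paper relies on the same input in Proposition \ref{11_involution-absolute}.
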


\begin{remark}
This criterion can be generalized to the relative setting and one can find it in the upcoming work \cite{BPPZ26}.
\end{remark}

Moreover, through \cite{BB17} and a careful study of $\Stab^{\dag}(S)$, a better understanding of the group $\Aut(\DbS,\sigma)$ has been obtained for a general K3 surface.

\begin{theorem}[Fan--Lai \cite{FL23}]\label{02_FL-result}
Consider a K3 surface $S$ with Picard rank one.
\begin{enumerate}
	\item The group $\Aut(\DbS,\sigma)$ is either trivial or isomorphic to the cyclic group $\Stwo$ for a stability condition $\sigma\in\Stab^{\dag}(S)$;
	\item Any non-trivial involution in $\Aut(\DbS)$ is non-symplectic and fixes a (not necessarily unique) stability condition in $\Stab^{\dag}(S)$;
	\item\label{02_FL-result(3)} Suppose that the degree of $S$ is $2t$, then the number of conjugacy classes of the non-trivial involutions on $\DbS$ is equal to $1$ if $t=1,2$; is equal to $0$ if $t$ is divisible by $4$; and is equal to 
	$$
		\frac{1}{2}\prod_{ p|t}\left(1+(-1)^{\frac{p-1}{2}}\right)
	$$
	in any other cases, where $p$ runs over the odd prime factors of $t$.
\end{enumerate}
\end{theorem}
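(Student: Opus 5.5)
The plan is to reduce everything to the action of $\rho(\Aut(\DbS))=\OO^+(\MukaiZ{S})$ on the rank-three lattice $\Num(S)\cong\Mst_{2t}=\HyU\oplus\bz(2t)$, and then use Theorem \ref{02_K3-Hu-conway} to transfer statements about isometries fixing a central charge back to autoequivalences fixing a stability condition.

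For (1), fix $\sigma=(\schp,Z)\in\Stab^{\dag}(S)$. By Theorem \ref{02_K3-Hu-conway}, $\Aut(\DbS,\sigma)\cong\OO^+(\MukaiZ{S},Z)$. Since $S$ has Picard rank one, the transcendental lattice $T(S)$ is an irreducible Hodge structure, so a Hodge isometry acts on $T(S)$ by $\pm1$; here I use that $\operatorname{rank}T(S)=21$ is odd, so the only roots of unity realized are $\pm1$. Any $\phi$ fixing $Z$ acts on $\Num(S)_{\br}$ fixing the positive two-plane spanned by $\operatorname{Re}Z,\operatorname{Im}Z$. It therefore acts on the orthogonal complement, which has signature $(0,1)$ inside $\Num(S)$, by $\pm1$. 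Because $\Num(S)\oplus T(S)$ has finite index in $\MukaiZ{S}$, $\phi$ is determined by these two signs, so the group is a subgroup of $(\bz/2)^2$. The sign on $T(S)$ and the sign on the negative line of $\Num(S)$ must satisfy the orientation condition defining $\OO^+$ on the positive four-space (the $Z$-plane together with the positive plane of $T(S)$). The positive plane of $T(S)$ is acted on by $\pm1$, which preserves orientation, so this condition does not immediately cut down the group. Instead, I would use the gluing condition: the discriminant groups of $\Num(S)$ and $T(S)$ are both $\bz/2t$, and the two signs must induce the same action on them. For $t>1$ this forces the signs to agree, which gives at most $\Stwo$. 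For $t=1$ the discriminant argument is weaker, and I would need to show separately that $\phi=\id_{\Num}\oplus(-1)_T$ fixes no $Z$ in $\mathfrak{P}_0^+(S)$ beyond what $\OO^+$ permits, by checking the orientation on the full positive four-space.

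For (2), let $\Phi$ be a non-trivial involution. Its image $\phi$ lies in the finite group generated by $\phi$, whose elements act by $\pm1$ on $T(S)$. If $\phi$ acted trivially on $T(S)$, it would be symplectic. In that case I would invoke the Bayer--Bridgeland description $\Stab^{\dag}(S)\to\mathfrak{P}_0^+(S)$ as a universal cover with deck group generated by squares of spherical twists \cite{BB17}. Combined with the fact that $\Stab^{\dag}(S)$ is contractible, so that a finite group acting on it has a fixed point in a suitable sense, this shows that the subgroup $\langle\Phi\rangle$ fixes some $\sigma$. Theorem \ref{02_K3-Hu-conway}, together with (1), then forces $\phi$ to be the non-identity element of the $\Stwo$ found there, which acts by $-1$ on $T(S)$, a contradiction; so $\Phi$ is non-symplectic. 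The fixed-point step is the main obstacle. First I would show that $\Phi$ normalizes the deck group and descends to an isometric involution of $\mathfrak{P}_0^+(S)/\bc^*$, a hyperbolic-plane-like domain minus the holes at spherical classes. Then I would average a point with its image to obtain a fixed point downstairs, and finally lift it using the contractibility of $\Stab^{\dag}(S)$.

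For (3), the map $\Phi\mapsto\phi|_{\Num}$ identifies conjugacy classes of involutions with conjugacy classes of involutions $\tau\in\OO^+(\Num(S))$ that act by $-1$ on the discriminant group $\bz/2t$, so that they glue with $-1$ on $T(S)$. Such a $\tau$ fixes a positive two-plane and negates a negative vector $w$. Writing $w=(r,kH,s)$, I would need $w^2=2tk^2-2rs$ to be $-2$ or $-4t$-related, with reflection integrality constraints. This reduces the count to counting classes of solutions of $a^2\equiv-1\pmod t$ modulo $a\mapsto-a$, with the 2-adic condition $4\nmid t$. The number of square roots of $-1$ modulo $t$ is $\prod_{p\mid t}(1+(-1)^{(p-1)/2})$ times the 2-part factor, and halving for $\pm a$ gives the formula; the cases $t=1,2$ are special because $a\equiv-a$ there. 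The delicate part of this step is matching conjugacy under $\OO^+$ with the $\pm a$ identification, which I would handle by orbit computations in $\HyU\oplus\bz(2t)$.
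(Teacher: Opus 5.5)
The paper does not prove this theorem; it quotes it from \cite{FL23}. Judged on its own, your proposal has a genuine gap at its core: the fixed-point step in (2), which is the Nielsen-realization content of \cite{FL23}.

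Contractibility of $\Stab^{\dag}(S)$ does not by itself give fixed points. You would need Smith theory for a $\bz/2$-action on a finite-dimensional contractible space, which you do not invoke. The route you actually propose fails, for two reasons.
\begin{itemize}
\item Averaging downstairs breaks down in exactly the case you want to exclude. Let $\phi=s_\delta$ be the reflection in a class $\delta\in\Num(S)$ with $\delta^2=-2$; this is the cohomological action of a spherical twist. The induced involution of $\bh$ is the half-turn about the point where $Z(\delta)=0$, and that point is one of the removed points. So there is no fixed point on $\bh$ minus the spherical points, and the midpoint of $x$ and $\bar\phi(x)$ is always that puncture.
\item A fixed point downstairs only gives $\Phi.\sigma=K.\sigma.g$ with $K\in\ker\rho$ a deck transformation and $g\in\grp$. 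The map $\Phi$ permutes the infinitely many lifts, and contractibility says nothing about whether it fixes one.
\end{itemize}
The fixed point has to be produced upstairs. By \cite{BB17}, $\Stab^{\dag}(S)/\grp$ is the universal cover of $\bh$ minus the spherical points, hence biholomorphic to a disc. $\Phi$ acts on it by a finite-order holomorphic automorphism (a lift of a M\"obius transformation), which has a fixed point. Then $\Phi^2\cong\id$ together with the free action of the torsion-free group $\grp$ gives $\Phi.\sigma=\sigma$.

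The lattice bookkeeping also needs correcting, and (2) inherits the error from (1).
\begin{itemize}
\item Gluing does not force the two signs to agree. The reflection $s_\delta$ acts trivially on both discriminant groups, so the sign pair $(-1,+1)$ (on the negative line of $\Num(S)$ and on $T(S)$) glues. By unimodularity of $\MukaiZ{S}$, every such $\phi$ is a reflection in a $(-2)$-class. What excludes it is that $Z\circ s_\delta=Z$ forces $Z(\delta)=0$, which is impossible for $Z\in\mathfrak{P}_0^+(S)$.
\item For $t>1$ your bound still holds, since excluding $(+1,-1)$ suffices. But the fact you use in (2), that the non-trivial element acts by $-1$ on $T(S)$, needs the argument above.
\item For $t=1$ you are trying to prove something false. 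The isometry $\id_{\Num}\oplus(-1)_T$ glues, lies in $\OO^+(\MukaiZ{S})$ (it is $\rho(\iota_*)$ for the covering involution of $S\to\bp^2$), and fixes every $Z$. So $\Aut(\DbS,\sigma)\cong\Stwo$ for all $\sigma$; again the element to exclude is $s_\delta$.
\end{itemize}

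In (3), conjugacy must be taken in $\rho(\Aut(\DbS))|_{\Num(S)}$, i.e.\ among isometries acting by $\pm1$ on $A_{\Num(S)}\cong\bz/2t$, not in $\OO^+(\Num(S))$.
\begin{itemize}
\item The negated primitive vector $w$ satisfies $w^2=-2t$, not $-2$; the value $-2$ must be excluded, since then no $Z\in\mathfrak{P}_0^+(S)$ is fixed.
\item Integrality of the involution forces $w/t\in\Num(S)^\vee$, and its class in $A_{\Num(S)}$ recovers $\pm a\bmod t$. Conjugating by $g$ multiplies this class by the unit through which $g$ acts on $A_{\Num(S)}$.
\item Hence $\pm a$ is invariant under $\Aut(\DbS)$-conjugacy but not under $\OO^+(\Num(S))$-conjugacy, because $\OO^+(\Num(S))\to\OO(A_{\Num(S)})$ is surjective. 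For example, when $t=65$ some $g\in\OO^+(\Num(S))$ moves $\pm8$ to $\pm18$.
\item Passing from conjugacy of isometries to conjugacy of involutions is not automatic, because $\ker\rho$ is large. You need (2), the injectivity in Theorem \ref{02_K3-Hu-conway}, and the fact that the $\phi$-fixed central charges form a single $\GLp(2,\br)$-orbit in $\mathfrak{P}_0^+(S)$. Then any two fixed stability conditions differ by a deck transformation and the $\grp$-action.
\end{itemize}
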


This theorem will be revisited later in Section \ref{000_Section-2.2} in order to formulate the notion of Bridgeland--Enriques general K3 surfaces.

\begin{example}
Let $S$ be a K3 surface with Picard rank one and degree $2$, then one has $\Aut(S)=\bz/2\bz$ and the unique conjugacy class of non-trivial involutions on $\DbS$ is represented by $\tau_*$ for the unique non-trivial $\tau\in\Aut(S)$.
\end{example}

\begin{example}
Let $S$ be a K3 surface with Picard rank one and degree $4$, then the unique conjugacy class of non-trivial involutions on $\DbS$ is represented by the involution described in \cite[Section 8.1]{KP17} (see also \cite[Example 3.17]{Liu26b}).
\end{example}

\begin{example}
Let $S$ be a K3 surface with Picard rank one and degree $10$, then the unique conjugacy class of non-trivial involutions on $\DbS$ is represented by the involution described in \cite[Section 8.2]{KP17} (see also \cite[Example 3.18]{Liu26b}).
\end{example}

\subsection{Enriques categories over K3 surfaces}
The notion of an Enriques category is introduced in \cite{BP23}. Here let us recall some basic definitions.

\begin{definition}
An autoequivalence $\funct{S}_{\scht}$ on a triangulated category $\scht$ is called a \defi{Serre functor} if one has a functorial isomorphism
$$\Hom(A,B)\cong\Hom(B,\funct{S}_{\scht}A)^\vee$$
for any objects $A$ and $B$ in $\scht$.
\end{definition}

The Serre functor is unique, once it exists. An admissible subcategory of the bounded derived category of a smooth projective variety admits a Serre functor.

\begin{definition}
A \defi{2-Calabi--Yau category} $\schd$ is a triangulated category which has a Serre functor isomorphic to the shift functor $[2]$ on $\schd$.
\end{definition}

The bounded derived category of a K3 surface or an abelian surface is by definition a 2-Calabi--Yau category.

\begin{definition}
An \defi{Enriques category} $\sche$ is the equivariant category $\schd_{\Stwo}$ for an $\Stwo$-action on a $2$-Calabi--Yau category $\schd$ such that the Serre functor of $\sche$ has the form $\Pi\circ[2]$ where $\Pi$ is a generator of the residual $\mathfrak{S}^\vee_2$-action on $\sche$.
\end{definition}

The prototypes of Enriques categories are bounded derived categories of Enriques surfaces. The first examples of Enriques categories over K3 surfaces with Picard rank one are due to \cite{KP17}. In general, one has

\begin{proposition}\label{02_Enriques-over-K3-cri}
Consider an $\Stwo$-action on $\DbS$ for a K3 surface $S$ generated by an
involution $\Phi$, then the equivariant category $\sche=\DbS_{\Stwo}$ is an Enriques category if and only if the involution $\Phi$ is non-symplectic.

\begin{proof}
It is a combination of \cite[Lemma 6.5]{BO23} and \cite[Proposition A.1]{Liu26a}.
\end{proof}
\end{proposition}

Also, one has the following criterion deduced from \cite[Corollary 4.11]{BO23}.

\begin{proposition}\label{02_Enriques-group-action-cri}
An involution $\Phi$ on $\DbS$ induces an $\Stwo$-group action on $\DbS$ if there exists an object $E$ in $\DbS$ fixed by $\Phi$ satisfying $\Hom(E,E)=\bc$.
\end{proposition}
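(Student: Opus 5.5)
The plan is to reduce the statement to the obstruction theory for lifting a weak group action (an involution) to an honest $\Stwo$-action, as developed in \cite{BO23}. Let $\Phi$ be an autoequivalence with $\Phi^2\cong\id$, and fix an isomorphism $\theta\colon\Phi^2\stackrel{\sim}{\rightarrow}\id$. A genuine $\Stwo$-action requires a choice of $\theta$ compatible with associativity, namely $\Phi\theta=\theta\Phi$ as isomorphisms $\Phi^3\rightarrow\Phi$. The discrepancy $\Phi\theta\circ(\theta\Phi)^{-1}$ is an automorphism of the functor $\Phi$, equivalently an element of $\Aut(\id_{\DbS})$. For a K3 surface the natural automorphisms of the identity functor on $\DbS$ are $\bc^*$; this comes from Orlov's theorem (Theorem \ref{02_K3-Orlov}) and the fact that $\HH\!\HH^0(S)=\HH^0(S,\sho_S)=\bc$. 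So the obstruction is a scalar $\lambda\in\bc^*$, and rescaling $\theta$ by $c\in\bc^*$ does not change it. Because $\Phi$ is an involution, one checks that $\lambda^2=1$, so $\lambda=\pm1$, and an $\Stwo$-action exists exactly when $\lambda=1$. This is the content of \cite[Corollary 4.11]{BO23}, which I would quote in this form.

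Next I would use the fixed object $E$ to evaluate $\lambda$. Pick an isomorphism $\phi\colon\Phi(E)\stackrel{\sim}{\rightarrow}E$. The composite
$$E\cong\Phi^2(E)\xrightarrow{\Phi(\phi)}\Phi(E)\xrightarrow{\phi}E,$$
with the first map given by $\theta_E^{-1}$, is an automorphism of $E$. Since $\Hom(E,E)=\bc$, it is a scalar $\mu$, and after rescaling $\phi$ by $\mu^{-1/2}$ we may assume $\mu=1$. Now apply $\Phi$ once more and compare the two ways of identifying $\Phi^3(E)$ with $\Phi(E)$, namely via $\theta_{\Phi(E)}$ and via $\Phi(\theta_E)$. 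Naturality of $\theta$ with respect to $\phi$ shows that the two composites differ exactly by $\lambda$ acting on $\Phi(E)\cong E$. On the other hand, the normalization $\mu=1$ forces the two composites to agree as endomorphisms of the simple object in the sense of $\Hom(E,E)=\bc$. Hence $\lambda=1$, and the $\Stwo$-action exists. Equivalently, $(E,\phi)$ becomes an equivariant object for the weak action, and \cite{BO23} shows that a weak action admitting an equivariant simple object must have trivial obstruction class.

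The main obstacle is the diagram chase in the second step. One has to set up the identities carefully, using $\theta_{\Phi(E)}$ versus $\Phi(\theta_E)$ and naturality of $\theta$ along $\phi$, so that the discrepancy appears precisely as $\lambda\cdot\id_E$ and not as some other scalar. I would first try to write both composites $\Phi^3(E)\rightarrow E$ explicitly in terms of $\phi$ and $\theta$ and compare them. If that proves messy, I would simply cite \cite[Corollary 4.11]{BO23} as stated, since the paper attributes the criterion to it.
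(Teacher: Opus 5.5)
Your proposal is correct and matches the paper, which gives no argument beyond deducing the statement from \cite[Corollary 4.11]{BO23}; your sketch of the $\Stwo$-obstruction is the mechanism behind that corollary. The diagram chase you were worried about does close: the normalization gives $\phi\circ\Phi(\phi)=\theta_E$, and naturality of $\theta$ along $\phi$ then yields $\phi\circ\Phi(\theta_E)=\phi\circ\Phi(\phi)\circ\Phi^2(\phi)=\theta_E\circ\Phi^2(\phi)=\phi\circ\theta_{\Phi(E)}$, hence $\Phi(\theta_E)=\theta_{\Phi(E)}$ and $\lambda=1$ (the only input you take on faith is that automorphisms of the identity functor are scalars, which is exactly the framework in which \cite{BO23} sets up the obstruction).
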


By the construction of equivariant categories, two Enriques categories over the same K3 surface $S$ are equivalent if and only if the involutions generating the two $\Stwo$-actions on $\DbS$ are conjugate to each other.

\subsection{Moduli space of semistable objects}
An admissible subcategory $\scht$ inside $\cate{D}^b(X)$ of a smooth projective variety $X$ admits a finite-dimensional $\Num(\scht)$. Let $\sigma$ be a stability condition on $\scht$ and $v$ be a class in $\Num(\scht)$, one can define a moduli stack $\mathfrak{M}_{\sigma}(v)$ of $\sigma$-semistable objects with class $v$. A comprehensive reference is \cite{BLMNPS21}.

\begin{definition}
The moduli stack $\mathfrak{M}_{\sigma}(v)\colon(\cate{Sch}/\bc)^{\cate{op}}\rightarrow\cate{Gpds}$ is defined by
$$\mathfrak{M}_{\sigma}(v)(T)=\{E\in\cate{D}^b(X_T)\,|\,E|_{X_t}\in\scht_t\subset\cate{D}^b(X_t)\textup{ is }\sigma\textup{-semistable for any }t\in T\}$$
for any locally finitely presented complex scheme $T$. Similarly, one defines the moduli substack $\mathfrak{M}^{\stable}_{\sigma}(v)$ of $\sigma$-stable objects with class $v$.
\end{definition}

Given a class $v\in\Num(S)$ and a $v$-generic stability condition $\sigma$ in $\Stab^{\dag}(S)$ for a K3 surface $S$, then \cite{BM14} asserts that the moduli stack $\mathfrak{M}_{\sigma}(v)$ admits a projective coarse moduli space $\modulis_{\sigma}(v)$ whose points are $S$-equivalent classes of $\sigma$-semistable objects in $\schp(-1,1]$ with Mukai vector $v$. Thanks to the support property, one has

\begin{proposition}[{{\cite[Section 9]{Bri08} and \cite[Proposition 2.3]{BM14}}}]\label{wall-and-chamber}
Given $v\in\Num(S)$, then there exists a locally finite set of walls (real codimension one submanifolds with boundary) in the complex manifold $\Stab^{\dag}(S)$, depending only on $v$, such that:
	\begin{itemize}
		\item The walls split $\Stab^{\dag}(S)$ into open and path-connected chambers. 
		\item The set of semistable and stable objects with class $v$ does not change for stability conditions in a chamber;
		\item When $\sigma$ lies on a single wall, then there is a $\sigma$-semistable object that is unstable in one adjacent chamber, and semistable in the other.
		\item When we restrict to an intersection of finitely many walls $W_1\cap\cdots\cap W_k$, we obtain a wall-and-chamber
		decomposition on $W_1\cap\cdots\cap W_k$ with the same properties, where the walls are given by $W \cap W_1\cap\cdots\cap W_k$.
	\end{itemize}
If $v$ is primitive in $\Num(S)$, then a stability condition $\sigma$ lies on a wall with respect to $v$ if and only if there exists a strictly $\sigma$-semistable object of class $v$. 
\end{proposition}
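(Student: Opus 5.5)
The statement to prove is the wall-and-chamber Proposition \ref{wall-and-chamber}. I would follow the standard argument of \cite[Section 9]{Bri08} and \cite[Proposition 2.3]{BM14}, with the support property as the main tool.

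\emph{Step 1: finiteness of potential destabilizing classes.} Fix $v$ and a compact subset $K\subset\Stab^{\dag}(S)$. By the support property there is a constant $C_K$ with $\|u\|\leq C_K|Z(u)|$ for every $\sigma=(\schp,Z)\in K$ and every $u=v(E)$ with $E$ $\sigma$-semistable. If $E$ has class $v$ and is semistable for some $\sigma\in K$, then its Jordan--H\"older factors, and more generally its HN factors for nearby $\sigma'\in K$, have central charges of modulus at most $|Z(v)|$. This bound is uniform on $K$ by continuity of $Z$ in $\sigma$ (Remark \ref{02_K3-Bridgeland-remark}). So the classes $w$ of possible destabilizing subobjects lie in a bounded, hence finite, subset of the lattice $\Num(S)$.

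\emph{Step 2: definition of the walls.} For each such class $w$ not proportional to $v$, set
$$W_w=\{\sigma\,:\,Z(w)/Z(v)\in\br_{>0}\text{ and some semistable object of class }v\text{ has a semistable subobject of class }w\}.$$
Since $\Stab^{\dag}(S)\to\Hom(\Num(S),\bc)$ is a local isomorphism, the condition $\operatorname{Im}(Z(w)\overline{Z(v)})=0$ cuts out a real codimension one submanifold locally. The walls are closed subsets of these, so they are submanifolds with boundary. By Step 1 only finitely many of them meet $K$, which gives local finiteness.

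\emph{Step 3: chambers and constancy.} On a connected component of the complement, consider a path. Along the path, the set of semistable objects of class $v$ can change only where the phase of some subobject class $w$ crosses the phase of $v$, that is, on a wall. Openness of stability and closedness of semistability, as in \cite[Section 9]{Bri08}, then show that the semistable and stable loci are constant in each chamber. Path-connectedness of chambers follows because the walls are locally finite submanifolds.

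\emph{Step 4: behaviour on a wall and primitivity.} If $\sigma$ lies on a single wall $W_w$, deform $Z$ slightly to the two sides. On one side the phase of $w$ exceeds that of $v$, so an object $E$ containing a subobject of class $w$ becomes unstable. On the other side the relevant extensions, or the object itself, remain semistable. For the restriction to an intersection $W_1\cap\cdots\cap W_k$, apply the same argument inside that submanifold.

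For the primitive case: if $\sigma$ is on a wall, there is a semistable $E$ of class $v$ whose JH filtration has a factor of class $w$ of the same phase. Since $v$ is primitive and $w$ is not proportional to $v$, the object $E$ is strictly semistable. Conversely, a strictly semistable object of class $v$ has a JH factor of class $w$ with $Z(w)\in\br_{>0}Z(v)$. Primitivity of $v$ forces $w$ not to be a multiple of $v$, so $\sigma\in W_w$.

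The main obstacle is the uniform finiteness of Step 1 and the "semistable on the other side" claim in Step 4. For Step 4 I would try to produce the required object as a suitable nontrivial extension, using $\Ext^1\neq0$ computed from the Mukai pairing on a K3 surface.
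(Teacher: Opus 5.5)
Your Steps 1--3 are essentially Bridgeland's proof of \cite[Proposition~9.3]{Bri08}, which is all the paper relies on here, since it gives no argument beyond citing \cite{Bri08,BM14}. A support-property bound that is uniform on a compact set gives finitely many classes of semistable factors. The potential walls are the loci where two such classes have aligned central charges. A path argument using closedness of semistability then gives constancy off these loci.

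One point needs fixing there. Bridgeland's walls are the full alignment loci $\{Z(w)\in\br_{>0}Z(v)\}$, which are genuine real codimension one submanifolds because $\Stab^{\dag}(S)\to\Hom(\Num(S),\bc)$ is a local homeomorphism. Your $W_w$ adds an existence condition, and ``closed subset of a hypersurface, hence a submanifold with boundary'' does not follow; closedness is not shown either. Since the path argument in Step 3 only produces a point where some class is aligned with $v$, it should be run with the full loci.

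The genuine gap is the second half of Step 4, as you suspected, and the remedy you propose cannot work in general. (The first half also needs a $\sigma$-\emph{stable} subobject together with openness of stability, not just a semistable subobject.) For non-isomorphic $\sigma$-stable $A,B$ of the same phase one has $\Hom(B,A)=0=\Ext^2(B,A)$, so $\dim\Ext^1(B,A)=\langle v(A),v(B)\rangle$, and this can be $0$.

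Here is a concrete case. Take orthogonal spherical classes $w,w'$, e.g.\ $v(\sho_S)$ and $v(\sho_C(-1))$ for a $(-2)$-curve $C$. Set $v=w+w'$, which is primitive with $v^2=-4$. Let $\sigma$ be a generic point of $\{Z(w)\in\br_{>0}Z(v)\}$ carrying $\sigma$-stable $A,B$ of classes $w,w'$.
\begin{itemize}
\item $A\oplus B$ is the only $\sigma$-semistable object of class $v$, so $\sigma\in W_w$.
\item $A\oplus B$ is unstable on both sides of the wall.
\item At nearby generic points nothing of class $v$ is semistable at all: $v$ is primitive, so such objects would be stable, and stable objects satisfy $v(E)^2\geq-2$.
\end{itemize}
So no extension construction can produce the object required by (b). Your idea does work when the Jordan--H\"older factors on the wall are two non-isomorphic stable objects with $\langle v(A),v(B)\rangle>0$, but not beyond that.

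In the literature, (b) is not proved by producing objects; it comes from which loci are declared walls. One keeps only those pieces of Bridgeland's potential walls across which the set of semistable objects of class $v$ actually changes. Then (b) is just closedness of semistability applied to an object that is semistable in one adjacent chamber and not in the other.

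The primitive criterion must then be proved for these walls rather than built into the definition of $W_w$. Openness of stability gives the direction ``on a wall $\Rightarrow$ strictly semistable''. The converse needs some strictly $\sigma$-semistable object of class $v$ to be semistable in an adjacent chamber, and this is exactly what fails in the example above. There the primitive criterion forces $\sigma$ onto a wall, yet nothing of class $v$ is semistable on either side, contrary to (b). So the statement tacitly excludes such degenerate classes; the classes to which the paper actually applies it all have $v^2\geq0$.
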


The behavior of $\modulis_{\sigma}(v)$ with $\sigma$ moving across a wall is studied intensively in \cite{BM14,BM14-MMP,MZ16}. Here we only recall some necessary definitions for this article.

Suppose that $W\subset\Stab^{\dag}(S)$ is a wall with respect to a given $v\in \Num(S)$ with $\modulis_{\sigma_0}(v)\neq\varnothing$ and $\sigma_0\in W$ is not in any other wall, then $W$ is called a \emph{totally semistable wall} if the stable locus $\modulis^{\stable}_{\sigma_0}(v)$ is empty.

Suppose in addition that $\sigma_{\pm}$ are $v$-generic stability conditions in two different adjoining chambers of the wall $W$. Then \cite[Proposition 5.2]{MZ16} asserts a birational morphism $\pi_{\pm}\colon \modulis_{\sigma_{\pm}}(v)\rightarrow\overline{\modulis}_{\pm}$ which contracts curves in $\modulis_{\sigma_{\pm}}(v)$ parameterizing $S$-equivalent objects under $\sigma_0$, where $\overline{\modulis}_{\pm}$ is the image of $\pi_{\pm}$ in $\modulis_{\sigma_0}(v)$. Then the wall $W$ is called a \emph{fake wall} if $\pi_{\pm}$ are isomorphisms, a \emph{flopping wall} if $\pi_{\pm}$ are small contractions, and a \emph{divisorial wall} if $\pi_{\pm}$ are divisorial contractions.

\begin{definition}
A stability condition $\sigma$ is called $v$\defi{-generic} for a given $v\in\Num(S)$ if it is not contained in any wall with respect to $v$.
\end{definition}

The following two theorems are established in \cite{BM14}.

\begin{theorem}\label{11_bridgeland-hyperk-K3}
Consider a Mukai vector $v\in\Num(S)$ with $v^2=0$ and a $v$-generic stability condition $\sigma$ in $\Stab^{\dag}(S)$, then $\modulis_{\sigma}(v)$ is a K3 surface and there exists a Hodge isometry
$$\theta_{v,\sigma}\colon v^{\perp}/\bz v\stackrel{\sim}{\rightarrow} \HH^2(\modulis_{\sigma}(v),\bz)$$	
where $v^{\perp}$ and $\bz v$ are sublattices in $\MukaiZ{S}$.
\end{theorem}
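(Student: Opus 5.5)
This is the theorem of Bayer--Macrì \cite{BM14}, which the paper quotes rather than reproves, so I only sketch the standard route. First I reduce to a primitive vector. If $v^2=0$ and $v=mv_0$ with $m>1$, then semistable objects of class $v$ are extensions of stable objects of class $v_0$, and the statement is really about $v_0$. Taking $v$ primitive, $v$-genericity of $\sigma$ together with Proposition \ref{wall-and-chamber} gives that every $\sigma$-semistable object of class $v$ is stable. Hence $\mathfrak{M}_\sigma(v)$ is a $\mathbb{G}_m$-gerbe over its coarse space $\modulis_\sigma(v)$. By deformation theory, $\Ext^1(E,E)$ has dimension $v^2+2=2$ and $\Ext^2(E,E)\cong\Hom(E,E)^\vee=\bc$, so the space is smooth of dimension $2$. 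Mukai's construction (Serre duality on $\DbS$) equips it with a non-degenerate holomorphic symplectic form.

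For projectivity and non-emptiness I would move $\sigma$ within its chamber and use the $\grp$-action to reach a Gieseker-type chamber. Alternatively, I would apply derived autoequivalences from $\Aut(\DbS)$, using Orlov and Huybrechts--Macrì--Stellari (the image of $\rho$ is $\OO^+(\MukaiZ{S})$), to reduce to a twisted Gieseker moduli space of sheaves. This is the key comparison of \cite{BM14}: after a suitable autoequivalence and large-volume limit, Bridgeland stability coincides with (twisted) Gieseker stability. For sheaves with $v^2=0$, Mukai's classical results then give a non-empty projective K3 surface $M$, possibly twisted by a Brauer class. Non-emptiness uses Yoshioka's existence results. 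The twist disappears because $v^2=0$ forces a universal family to exist quasi-locally, and the resulting Fourier--Mukai kernel makes $M$ derived equivalent to $S$ up to the twist.

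The Hodge isometry $\theta_{v,\sigma}$ comes from a quasi-universal family $\mathcal{E}$ on $S\times\modulis_\sigma(v)$. I define $\theta(w)$ as the degree-$2$ component of $\frac{1}{\rho}\,p_{M*}\bigl(\ch(\mathcal{E})\sqrt{\td}\cdot w^\vee\bigr)$, where $\rho$ is the similitude. The Fourier--Mukai transform $\Phi_{\mathcal{E}}$ is an equivalence between $\DbS$ and $\cate{D}^b(\modulis_\sigma(v),\alpha)$ by the Bridgeland--Orlov criterion applied to the stable objects. Since $\Phi_{\mathcal{E}}$ is an equivalence, its cohomological transform is a Hodge isometry of Mukai lattices. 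It sends $v$ to the class of a point, so it sends $v^\perp$ to the classes with zero rank, and $v^\perp/\bz v$ maps isomorphically onto $\HH^2$.

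The main obstacle is the comparison step: showing that the moduli space for an arbitrary generic $\sigma\in\Stab^\dag(S)$ is non-empty and projective. This requires the wall-crossing analysis of \cite{BM14} and Toda's or Yoshioka's results to reach the Gieseker chamber while preserving the K3 property. I would cite \cite[Theorem 6.10, Section 6]{BM14} for this directly.
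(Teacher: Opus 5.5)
The paper does not prove this statement: it quotes it from \cite{BM14}, where it is stated for \emph{primitive} $v$. Your sketch also defers the decisive step to \cite{BM14}, so in substance you follow the same route. Two of the steps you wrap around that citation are wrong.

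First, your opening ``reduction'' to primitive $v$ is not a reduction: for non-primitive $v$ the statement is false. Primitivity therefore has to be a hypothesis. The printed statement omits it, but every application in the paper (e.g.\ $v_0=(t,-aH,a^2)$ with $\gcd(a,t)=1$) is primitive. Suppose $v=mv_0$ with $m\geq 2$ and $v_0^2=0$, and $\sigma$ is $v$-generic. Then the Jordan--H\"older factors of any semistable object of class $v$ have classes in $\bz_{>0}v_0$. There is also no stable $F$ of class $kv_0$ with $k\geq 2$: for every stable $E$ of class $v_0$ one gets $\Hom(F,E)=\Hom(E,F)=0$ and $\chi(F,E)=0$, hence $\Ext^*(F,E)=0$, and these $E$ form a spanning class. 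So $\modulis_\sigma(v)\cong\mathrm{Sym}^m\modulis_\sigma(v_0)$ has dimension $2m$, and $v^{\perp}/\bz v=v_0^{\perp}/m\bz v_0$ has $m$-torsion. Knowing the result for $v_0$ shows that the statement fails for $v$; it does not prove it.

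Second, the mechanisms you propose for non-emptiness and projectivity do not work as described. Moving $\sigma$ inside its chamber and acting by $\grp$ does not change $\modulis_\sigma(v)$ up to isomorphism and never leaves the chamber, so it reaches the Gieseker chamber only if $\sigma$ was already there. Huybrechts--Macr\`i--Stellari gives autoequivalences realizing any orientation-preserving Hodge isometry of $\MukaiZ{S}$. But nothing in your sketch produces a $\Phi$ with $\Phi.\sigma$ in the Gieseker chamber of $\Phi_*v$, and, as the paper recalls, it is not even known in general that $\Phi.\sigma$ stays in $\Stab^{\dag}(S)$. For isotropic $v$ such a description holds only a posteriori: $\Phi_{\mathcal{E}}^{-1}$ turns $\sigma$ into a geometric stability condition on $(\modulis_\sigma(v),\alpha)$ with $v\mapsto(0,0,1)$, which presupposes the conclusion. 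So non-emptiness and projectivity rest entirely on the citation; in \cite{BM14} projectivity comes from ampleness of the divisor class $\ell_\sigma$, not from such a comparison. Also, the twist does not disappear. The space $\modulis_\sigma(v)$ need not be fine: take $v=(2,H,2)$ on a Picard rank one K3 surface of degree $8$, where $\langle v,\Num(S)\rangle=2\bz$. The equivalence must therefore be built from the $\alpha$-twisted universal family, not from the quasi-universal family defining $\theta$. This is harmless. Granted non-emptiness and projectivity, your last paragraph correctly gives the K3 surface and the Hodge isometry: apply Bridgeland's criterion using $\chi(E_m,E_{m'})=-v^2=0$, then pass to $(0,0,1)^{\perp}/\bz(0,0,1)$ inside the $B$-field twisted Mukai lattice, which is $\HH^2(\modulis_\sigma(v),\bz)$ with its usual Hodge structure.
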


\begin{theorem}\label{11_bridgeland-hyperk-K3[n]}
Consider a Mukai vector $v\in\Num(S)$ with $v^2=2n-2\geq2$ and a $v$-generic stability condition $\sigma$ in $\Stab^{\dag}(S)$, then $\modulis_{\sigma}(v)$ is a $\KK^{[n]}$-type hyperkähler manifold and there exists a Hodge isometry
$$\theta_{v,\sigma}\colon v^{\perp}\stackrel{\sim}{\rightarrow} \HH^2(\modulis_{\sigma}(v),\bz)$$
where $v^{\perp}$ is the sublattice in $\MukaiZ{S}$.
\end{theorem}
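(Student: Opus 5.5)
The plan is to reduce to moduli of Gieseker-stable sheaves, where the result is classical (Mukai, Yoshioka), and then move through $\Stab^{\dag}(S)$ by wall-crossing while tracking the Mukai morphism $\theta_{v,\sigma}$. I will assume throughout that $v$ is primitive, as in \cite{BM14}; otherwise strictly semistable objects occur for every $\sigma$ and the statement fails. Since $v^2>0$, the class $v$ is not spherical, and it is not isotropic.

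\emph{Step 1: normalization.} The group $\grp$ acts on $\Stab^{\dag}(S)$ without changing the sets of semistable objects, up to a shift. Moreover, the relevant autoequivalences (tensoring with line bundles, shifts, and the spherical twist or Fourier--Mukai transforms moving $\Stab^{\dag}(S)$ to itself) act on moduli spaces by isomorphisms compatible with $\theta$. Using these, I may assume $v=(r,\Delta,s)$ with $r>0$, or $r=0$ and $\Delta$ effective. I may also assume $\sigma$ lies in the chamber of a Bridgeland stability condition $\sigma_{\omega,\beta}$ as in Theorem \ref{02_K3-Bridgeland}, with $\omega\gg0$.

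\emph{Step 2: large volume limit.} By \cite[Section 14]{Bri08}, for $\omega\gg0$ and $v$-generic $\beta$, the $\sigma_{\omega,\beta}$-semistable objects of class $v$ are exactly the $\beta$-twisted Gieseker semistable sheaves. For generic $\beta$, these sheaves are stable. By Mukai and Yoshioka, this moduli space is a projective hyperkähler manifold deformation equivalent to $\Hilb^n(S)$. The map
$$w\mapsto c_1\bigl(p_{M*}(\ch(\shu)\cdot p_S^*(w^\vee\sqrt{\td(S)}))\bigr)$$
defined by a (quasi-)universal family $\shu$ gives the Hodge isometry $v^{\perp}\cong\HH^2(M,\bz)$. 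Yoshioka proves this for $n\ge2$ by deforming to the Hilbert scheme case and reducing via Fourier--Mukai transforms.

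\emph{Step 3: arbitrary generic $\sigma$.} Connect $\sigma$ to the large volume chamber by a path crossing finitely many walls; finiteness holds by local finiteness (Proposition \ref{wall-and-chamber}). For every generic $\sigma$, I will show three things.
\begin{enumerate}
\item $\mathfrak{M}_\sigma(v)$ is a $\mathbb{G}_m$-gerbe over a smooth proper algebraic space $\modulis_\sigma(v)$ of dimension $v^2+2$. Smoothness holds because $\Ext^2(E,E)\cong\Hom(E,E)^\vee=\bc$ for stable $E$, and properness follows from \cite{BLMNPS21}.
\item $\modulis_\sigma(v)$ is projective. This comes from the Bayer--Macrì positivity lemma: $\sigma$ yields a nef class $\ell_\sigma$ that is strictly positive on curves of non-$S$-equivalent objects, hence ample at a generic $\sigma$.
\item Crossing a single wall $W$ gives a birational map $\modulis_{\sigma_+}(v)\dashrightarrow\modulis_{\sigma_-}(v)$. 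The reason is that, unless $W$ is totally semistable, a generic object stays stable on both sides. In the totally semistable case, one composes with a spherical twist or a derived dual to identify the two moduli spaces up to birational equivalence.
\end{enumerate}
Birational projective hyperkähler manifolds are deformation equivalent (Huybrechts), and the induced map on $\HH^2$ is a Hodge isometry. The Mukai morphisms on the two sides are defined by universal families that agree on the common open subset, which has complement of codimension at least two. Hence they are compatible with this isometry, and $\theta_{v,\sigma}$ remains a Hodge isometry.

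\emph{Main obstacle.} The hardest part is Step 3, specifically the totally semistable and divisorial walls. There the open subset of objects stable on both sides may be empty or too small, so one must exhibit an explicit autoequivalence relating $\modulis_{\sigma_\pm}(v)$. I would first classify walls by the rank-two hyperbolic lattice $\mathcal{H}_W\subset\Num(S)$ of classes whose central charge is aligned with $Z(v)$ along $W$. Using the isotropic and spherical classes in $\mathcal{H}_W$, one writes down Brill--Noether, Hilbert--Chow or Li--Gieseker--Uhlenbeck type contractions and the corresponding spherical twists or Fourier--Mukai functors, as in \cite{BM14-MMP}.
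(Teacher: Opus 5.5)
\textbf{Verdict: genuine gap, in Step 3(3).}

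The paper does not prove this statement; it quotes it from \cite{BM14}. You are right that $v$ must be primitive, a hypothesis the paper's formulation drops. Your base case (Steps 1--2) is standard, but Step 3(3), which carries all the weight, does not go through as written.

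\emph{Birationality.} At a wall that is not totally semistable, a $\sigma_0$-stable object only gives a common open subset $U=\modulis^{\stable}_{\sigma_0}(v)$ of $\modulis_{\sigma_+}(v)$ and $\modulis_{\sigma_-}(v)$. A birational map needs $U$ to be dense in $\modulis_{\sigma_-}(v)$, i.e.\ no component of $\modulis_{\sigma_-}(v)$ may consist entirely of $\sigma_0$-strictly semistable objects. Step 3(1) gives smoothness and pure dimension $v^2+2$, not connectedness. So you need either the irreducibility you are trying to prove or a dimension estimate on the strictly semistable strata, and neither is supplied.

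\emph{Compatibility of the Mukai morphisms.} You claim the complement of $U$ has codimension at least two, so that $\theta_{\sigma_+}$ and $\theta_{\sigma_-}$ agree under the birational map. This is false. Take a Brill--Noether wall, i.e.\ an effective spherical class $s\in\mathcal{H}_W$ with $\langle s,v\rangle=0$, that is not totally semistable. There the complement of $U$ contains the Brill--Noether divisor on each side. After identifying second cohomology via the map extending the identity of $U$, one has $\theta_{\sigma_-}=\theta_{\sigma_+}\circ\rho_s$, with $\rho_s$ the reflection in $s$. This is forced: $\ell_\sigma$ is read off from the central charge through $\theta_\sigma$, so if $\theta_{\sigma_\pm}$ agreed, $\ell_{\sigma_-}$ would leave the movable cone through the hyperplane orthogonal to the contracted divisor. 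The theorem survives only because $\rho_s$ is itself a Hodge isometry of $v^{\perp}$, which your argument never establishes.

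\emph{Circularity.} What you defer to \cite{BM14-MMP} is proved there using the present theorem as an input. This covers the classification of walls via $\mathcal{H}_W$, the Hilbert--Chow, Brill--Noether and Li--Gieseker--Uhlenbeck contractions, and the twists at totally semistable walls. The inputs are:
non-emptiness of the moduli spaces of classes in $\mathcal{H}_W$;
their structure (points, K3 surfaces, $\KK^{[m]}$-type manifolds);
the isometry $\theta$ on both sides of the wall.
As written the argument is therefore circular. Making it honest would need an induction (e.g.\ on $v^2$) in which every input concerns only smaller classes or the side already treated. That is substantial work and it is absent from the proposal.

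\emph{How the cited source argues.} It avoids wall-crossing entirely. Using autoequivalences and Bridgeland's description of $\Stab^{\dag}(S)$, it reduces to geometric stability conditions. Following Minamide--Yanagida--Yoshioka and Toda, it then deforms $(S,v,\sigma)$ in a family along which the relative moduli space is smooth and proper. It reaches a case where a Fourier--Mukai transform identifies the moduli space with a twisted Gieseker moduli space. Yoshioka's theorem applies there, and the $\KK^{[n]}$-type and the isometry $\theta$ propagate along the family. Projectivity comes from the positivity lemma. Replacing your Step 3 by this deformation argument repairs the proof.

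Two smaller points:
in 3(1), unobstructedness needs the obstructions to lie in the traceless part of $\Ext^2(E,E)$, not merely $\Ext^2(E,E)\cong\bc$;
in 3(2), strict positivity on curves does not by itself give ampleness on a proper algebraic space.
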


Moreover, the stability conditions on K3 surfaces can be defined in families and one has the relative moduli space of semistable objects similar to the relative moduli space of semistable sheaves \cite[Theorem 4.3.7]{HL} according to \cite{BLMNPS21}. We will not recall the explicit definitions as this article does not essentially use them.

\section{Bridgeland--Enriques General K3 Surfaces}\label{000_Section-11}

In this section, we will understand the locus of Bridgeland--Enriques general K3 surfaces. In particular, we will prove Theorem \ref{00_main-result-1} and Theorem \ref{00_main-result-1.2}.

\subsection{The first example of Bridgeland--Enriques general locus}
Here let us first explain Example \ref{00_example-t=1} as a warm-up.

\begin{proposition}\label{11_first-example}
Any polarized K3 surface $(S,H)$ of degree $2$ is Bridgeland--Enriques general. Moreover, one has $\MBE(2)=\MKK(2)$.

\begin{proof}
The ample line bundle $H$ realizes $S$ as a double cover $S\rightarrow\bp^2$ branched over a degree six curve. The covering involution $f$ on $S$ is known to be non-symplectic and induces an $\Stwo$-action on $\DbS$. The equivariant category $\DbS_{\Stwo}$ is Enriques due to Proposition \ref{02_Enriques-over-K3-cri}. Therefore, one sees directly that the Hodge isometry 
$$(r,\Delta,s)\mapsto(r,(\Delta\cdot H)H-\Delta,s)$$
on $\MukaiZ{S}$ induced by the involution $f$ makes $\MBE(2)=\MKK(2)$.
\end{proof}
\end{proposition}

Then we would like to mention that there can be more than one group action on a Bridgeland--Enriques general K3 surface.

\begin{example}[{{\cite[Example 3.11]{PPZ26}}}]\label{11_t=2-example1}
A smooth section $S\in |-K_W|$ of a bidegree $(1,1)$-divisor $W$ in $\bp^2\times\bp^2$ is a K3 surface. The two rulings induce nef line bundles $H_1$ and $H_2$ on the K3 surface $S$ such that $H_i^2=2$ and $H_1\cdot H_2=4$.

A \emph{special Verra threefold} is the double cover $V\rightarrow W$ branched over a smooth section $S\in |-K_W|$. One defines $\schk^i_V$ for $i=1,2$ by $\cate{D}^b(V)=\langle\schk^i_V,\pi^*_i\cate{D}^b(\bp^2)\rangle$ where $\pi_i\colon V\rightarrow\bp^2$ is the projection to the $i$-th $\bp^2$ factor. The covering involution on $V$ induces an $\Stwo$-action on $\schk_V^i$ such that $(\schk_V^i)_{\Stwo}\cong\DbS$ by \cite{KP17}.

The residual action on $\DbS$ is generated by the involution
$$\Pi_i:=\funct{T}_{\sho_S}\funct{T}_{\sho_S(H_i)}\funct{T}_{\sho_S(2H_i)}(-\otimes\sho(H_1+H_2))[-1]$$
according to \cite[Proposition 7.10]{KP17}. The Hodge isometry $\tau_1$ induced by $\Pi_1$ is 
$$(r,\Delta,s)\mapsto(r,(2r+a)H_1-rH_2-\Delta,s-3r-2a+b)$$
where $a:=\Delta\cdot H_1$ and $b:=\Delta\cdot H_2$. Up to a sign, the only primitive vector $v$ in 
$$\Omega_S:=\HH^0(S,\bz)\oplus \bz[H_1]\oplus\bz[H_2]\oplus \HH^4(S,\bz)$$
satisfying $\tau_1(v)=-v$ is $v=(0,-2H_1+H_2,3)$ which has divisibility $3$ in $\Omega_S$.

A general $S$ in this example has Picard rank two and $H_1$ is an ample class with square $2$ which induces a covering involution on $S$. One checks that, up to a sign, the only primitive vector $v'$ in $\Num(S)$ satisfying $f^*v'=-v'$ is $v'=(0,-2H_1+H_2,0)$ which has divisibility $6$ in $\Num(S)=\Omega_S$. It means that $\Pi_1$ is not conjugate to the direct image $f_*$ of the covering involution coming from $H_1$.
\end{example}

\begin{remark}
In fact, one has $\cate{D}^b(S)\cong\cate{D}^b(S',\alpha')$ for a degree $2$ K3 surface $S'$ and a 2-torsion Brauer class $\alpha'$ on $S'$ (see \cite[Example 3.23]{Liu26b} and \cite[Remark A.12]{Liu26a}).
\end{remark}

\begin{example}[{{\cite[Section 6.2]{JM26}}}]\label{11_t=2-example2}
A smooth section $S\in |-K_W|$ of the blow-up $W:=\Bl_p(\bp^3)$ is a K3 surface and it has two divisor classes $H$ and $E$ such that $H$ comes from $\sho_{\bp^3}(1)$ and $E$ is the exceptional curve of the blow-up. One sees that $H^2=4,E^2=-2$ and $E\cdot H=0$. The double cover $X\rightarrow W$ branched over $S$ can be associated with a category $\schk_X$ such that the covering involution on $X$ induces an $\Stwo$-action on $\schk_X$ satisfying $(\schk_X)_{\Stwo}\cong\DbS$ by \cite{KP17}. 

The residual action on $\DbS$ is generated by
$$\Pi:=\funct{T}_{\sho_S(-H)}\funct{T}_{\sho_S(-E)}\funct{T}_{\sho_S}(-\otimes\sho(2H-E))[-1]$$
according to \cite{JM26,KP17} and the associated Hodge isometry $\tau$ is
$$(r,\Delta,s)\mapsto(-9r-3a-2s,-bE+(15r+5a+3s)H-\Delta,-50r-15a-9s)$$
where $a:=\Delta\cdot H$ and $b:=\Delta\cdot E$. Up to a sign, the only primitive vector $v$ in 
$$\Omega_S:=\HH^0(S,\bz)\oplus \bz[H]\oplus\bz[E]\oplus \HH^4(S,\bz)$$
satisfying $\tau(v)=-v$ is $v=(2,-3H,10)$ which has divisibility $2$ in $\Omega_S$.

A general $S$ in this example has Picard rank two and $D:=H-E$ is an ample class with square $2$ which induces a covering involution on $S$. One checks that, up to a sign, the only primitive vector $v'$ in $\Num(S)$ satisfying $f^*v'=-v'$ is $v'=(0,H-2E,0)$ which has divisibility $4$ in $\Num(S)=\Omega_S$. It means that $\Pi$ is not conjugate to the direct image $f_*$ of the covering involution coming from $D$.
\end{example}

\subsection{The conjugacy classes of involutions}\label{000_Section-2.2}
In order to understand the locus $\MBE(2t)$ for $t\geq 2$, we would like to make some remarks on Theorem \ref{02_FL-result} involving modular curves. Readers are referred to \cite{DS} for basic definitions.

Suppose that $S$ has Picard rank one and degree $2t$, then any $\Phi$ in $\Aut(\DbS)$ induces an isometry $\phi$ on the upper half plane $\bh$ by \cite[Section 2.1]{Ka14}. The isometry $\phi$ fixes a point in $\bh$ when $\Phi$ is a non-trivial involution. So the non-trivial involutions on $\DbS$ correspond to points in $\bh$. The Hecke congruence subgroup
$$\Gamma_0(t):=\left\{\left.\begin{pmatrix}
	a_{11}&a_{12}\\a_{21}&a_{22}
\end{pmatrix}\in\PSL(2,\bz)\right|a_{21}\equiv0\pmod{t}\right\}$$
acts on $\bh$ and the quotient $\Gamma_0(t)\git\bh$ is the classical modular curve. 

\begin{theorem}[{{\cite[Theorem 5.6]{FL23}}}]
Consider a Picard rank one K3 surface with degree $2t$, then the order-two elliptic points in $\Gamma_0(t)\git\bh$ and the conjugacy classes of non-trivial involutions on $\DbS$ have the following relations.
\begin{enumerate}
	\item If $t=2$, then the unique conjugacy class of non-trivial involutions on $\DbS$ corresponds to the unique order-two elliptic point in $\Gamma_0(2)\git\bh$;
	\item If $t\geq 5$, then a conjugacy class of non-trivial involutions on $\DbS$ corresponds to two distinct order-two elliptic points in $\Gamma_0(t)\git\bh$. 
\end{enumerate}
\end{theorem}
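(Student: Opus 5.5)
The plan is to make the correspondence explicit through the action of $\Aut(\DbS)$ on the upper half plane, and then to compare it with the standard description of order-two elliptic points of $\Gamma_0(t)$.

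\textbf{Step 1: the image group.} For $S$ of Picard rank one and degree $2t$ we have $\Num(S)=\Mst_{2t}=\HyU\oplus\bz(2t)$. Following \cite[Section 2.1]{Ka14}, the group $\OO^+(\Num(S))$ acting on the period domain of $\Num(S)$, which is a copy of $\bh$, is identified with the Atkin--Lehner extension $\Gamma_0(t)^+$ of $\Gamma_0(t)$ inside $\PSL(2,\br)$, up to the sign $\pm\id$. Here $\Gamma_0(t)^+$ is generated by $\Gamma_0(t)$ and the Atkin--Lehner (Fricke) involutions $W_e$ for $e\,\|\,t$.

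\textbf{Step 2: which elements come from autoequivalences.} By \cite{HMS09}, the map $\rho$ surjects onto $\OO^+(\Num(S))$. Its kernel on $\Num(S)$ comes only from shifts $[2]$ and from symplectic elements acting trivially on $\Num(S)$, and in Picard rank one these are just $\pm\id$ up to the square of the shift. By Theorem \ref{02_FL-result}, every non-trivial involution is non-symplectic and acts on $\HH^{2,0}$ by $-1$. Comparing signs, the image $\phi\in\OO^+(\Num(S))$ is then an element whose action on $\bh$ has order exactly two. It is not a product involving only $\Gamma_0(t)$: the elements of $\Gamma_0(t)$ preserve the Mukai vector orientation in a way that forces $\tilde\tau$ to act by $+1$ on the transcendental part, which would make the involution symplectic. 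So $\phi$ must lie in the coset of the Fricke involution $W_t$. Conversely, every order-two elliptic element of $\Gamma_0(t)^+\setminus\Gamma_0(t)$ lifts to a non-symplectic involution. Conjugacy of involutions in $\Aut(\DbS)$ therefore corresponds to $\Gamma_0(t)^+$-conjugacy of their fixed points in $\bh$.

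\textbf{Step 3: counting the fixed points.} Write such a $\phi$ as $\frac{1}{\sqrt t}\begin{pmatrix} ta & b\\ tc & -ta\end{pmatrix}$ with determinant $1$, i.e.\ $-t^2a^2-btc=t$, i.e.\ $ta^2+bc=-1$. Its fixed point $z$ satisfies $tcz^2-2taz-b=0$. The key observation is that $z$ is also a fixed point of an order-two element of $\Gamma_0(t)$, namely a suitable product of $W_t$-type elements. Concretely, $\gamma=\begin{pmatrix}x&y\\tu&-x\end{pmatrix}$ with $x^2+tuy=-1$ has the same kind of fixed point, and solutions exist exactly when $-1$ is a square mod $t$, which matches the arithmetic condition in Theorem \ref{02_FL-result}. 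Then:
\begin{enumerate}
\item For $t\geq5$, $W_t$ normalizes $\Gamma_0(t)$ and acts on the order-two elliptic points of $\Gamma_0(t)\git\bh$. The elliptic points correspond to the roots $x\pmod t$ of $x^2\equiv-1$, and $W_t$ sends the class of $x$ to that of $-x$. Since $x\not\equiv -x$ when $t>2$, this action is free, so each $\Gamma_0(t)^+$-class splits into exactly two distinct $\Gamma_0(t)$-classes.
\item For $t=2$, $x\equiv -x\pmod 2$, so there is a unique elliptic point and it is fixed by $W_2$.
\end{enumerate}
As a check, this reproduces the count $\frac12\prod_{p\mid t}(1+(-1)^{(p-1)/2})$ from Theorem \ref{02_FL-result}, since the number of order-two elliptic points of $\Gamma_0(t)$ is $\prod_{p\mid t}(1+(-1)^{(p-1)/2})$.

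\textbf{Main obstacle.} The hardest part is Step 2: correctly identifying that involutions land in the Fricke coset and not in $\Gamma_0(t)$, and handling the $\pm\id$ and orientation bookkeeping. I would settle this by directly computing the action on the period of $\HH^{2,0}$, or instead quote the precise dictionary in \cite[Theorem 5.6]{FL23} and \cite{Ka14}.
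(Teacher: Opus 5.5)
Your Step 2 has the dictionary backwards, and Step 3 is built on it.

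\textbf{The main gap: involutions do not lie in the Fricke coset.} Suppose an involution $\Phi$ fixes $\sigma=(\schp,Z)$. Then $\rho(\Phi)|_{\Num(S)}$ fixes the vector $\Omega\in\Num(S)\otimes\bc$ representing $Z$, hence the positive plane spanned by $\Re\Omega,\Im\Omega$. It is not the identity, since it must act by $-1$ on $A_{\Num(S)}\cong\bz/2t\bz$ when $\Phi$ is non-symplectic and $t\geq 2$. So it is the reflection $s_w$ in a negative vector $w$; for the representatives in Proposition \ref{11_involution-description}, $w=(t,-aH,t(n-1))$ with $w^2=-2t$.

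The isometries $s_w$ and $-s_w$ induce the same transformation of $\bh$ but act on $A_{\Num(S)}$ by opposite signs. Indeed $\Psi$ and $\Psi[1]$ always have the same image in $\PSL(2,\br)$, so symplecticity cannot be read off from it. Your sign argument conflates $g$ with $-g$.

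Concretely, Proposition \ref{11_involution-description} shows that the involution acts by $z\mapsto-\frac{az+(n-1)}{tz+a}$. This is the matrix $\left(\begin{smallmatrix}-a&-(n-1)\\ t&a\end{smallmatrix}\right)$, which lies in $\Gamma_0(t)$, and its fixed point is $\frac{-a+\bm{i}}{t}$.

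By contrast, $W_t\colon z\mapsto -1/(tz)$ is the image of the reflection in $v(\sho_S)=(1,0,1)$, i.e.\ of $\funct{T}_{\sho_S}$, which is symplectic of infinite order. Its fixed point $\bm{i}/\sqrt{t}$ is a hole where $Z(\sho_S)=0$. More generally, an order-two element of $W_t\Gamma_0(t)$ lifts to $\pm s_w$ with $s_w$ acting trivially on $A_{\Num(S)}$. This forces $w^2=-2$, so none of these elements comes from an involution.

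Your converse claim would even produce an involution from $W_t$ for every $t$. That contradicts Theorem \ref{02_FL-result}~\eqref{02_FL-result(3)} when $4\mid t$ or $t$ has a prime factor $\equiv 3\pmod{4}$.

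The ``key observation'' in Step 3 is also false. An order-two element of $W_t\Gamma_0(t)$ fixes $\frac{ta\pm\bm{i}\sqrt{t}}{tc}\in\bq(\sqrt{-t})$. Point stabilizers in the discrete group $\langle\Gamma_0(t),W_t\rangle$ are cyclic, so such a point is never also fixed by an order-two element of $\Gamma_0(t)$. Hence nothing in your argument connects involutions to elliptic points of $\Gamma_0(t)$.

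The correct link is that involutions map to order-two elliptic elements of $\Gamma_0(t)$ itself. Their fixed points are the orbits $\Gamma_0(t)\frac{-a+\bm{i}}{t}$ of Lemma \ref{11_modular-curve-lemma}. The element $W_t$ only enters as the additional element one may conjugate by.

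\textbf{Two further points need repair.}

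First, the image of $\Aut(\DbS)$ in $\OO^+(\Num(S))$ is not all of $\OO^+(\Num(S))$. The transcendental lattice has odd rank, so its only Hodge isometries are $\pm\id$. An isometry of $\Num(S)$ therefore extends to a Hodge isometry of $\MukaiZ{S}$ only if it acts by $\pm 1$ on $A_{\Num(S)}$. This cuts the full Atkin--Lehner group down to $\langle\Gamma_0(t),W_t\rangle$, and the difference shows in the count. For $t=65$ there are four order-two elliptic points and two conjugacy classes, whereas conjugating also by $W_5$ and $W_{13}$ would merge them into one. Your Step 3 silently uses only $W_t$, which is inconsistent with Steps 1 and 2.

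Second, the kernel of $\Aut(\DbS)\to\OO(\Num(S))$ is far bigger than the shifts; it contains all squares of spherical twists. So ``same orbit of fixed points implies conjugate involutions'' needs an argument:
\begin{enumerate}
\item each involution fixes some stability condition (Theorem \ref{02_FL-result});
\item two involutions with the same fixed point in $\bh$ have the same Hodge isometry;
\item after conjugating by a deck transformation of $\Stab^{\dag}(S)\to\mathfrak{P}^+_0(S)$ and using the $\grp$-action, they fix a common stability condition;
\item Theorem \ref{02_K3-Hu-conway} then identifies them.
\end{enumerate}
Conversely, realizing each elliptic point requires $\frac{-a+\bm{i}}{t}$ not to be a hole, i.e.\ the existence of $\sigma_{t,a}$ as in Proposition \ref{11_involution-description}.

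With these corrections, your final step is the right mechanism: $W_t$ sends the class of $a$ to that of $-a$, freely for $t>2$ and trivially for $t=2$. Note that the paper itself only quotes this statement from Fan--Lai rather than proving it.
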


One notices that a point in $\Gamma_0(t)\git\bh$ is a $\Gamma_0(t)$-orbit in $\bh$, so a conjugacy class also corresponds to one or two $\Gamma_0(t)$-orbits. In particular, two non-trivial involutions on $\DbS$ are conjugate to each other if and only if the points in $\mathbb{H}$ fixed by them are contained in the corresponding $\Gamma_0(t)$-orbits.

\begin{example}\label{11_modular-curve-t=2}
Let $S$ be a K3 surface with Picard rank one and degree $4$, then the unique conjugacy class of involutions corresponds to the orbit
$$\Gamma_0(2)\frac{-1+\bm{i}}{2}$$
in $\bh$. One should compare this with \cite[Proposition 5.2]{Liu26a}.
\end{example}

\begin{example}
Let $S$ be a K3 surface with Picard rank one and degree $10$, then the unique conjugacy class of involutions corresponds to the two orbits
$$\Gamma_0(5)\frac{-2+\bm{i}}{5}\quad\textup{and}\quad \Gamma_0(5)\frac{-3+\bm{i}}{5}$$
in $\bh$. One should compare this with \cite[Proposition 6.2]{Liu26a}.
\end{example}
 
In general, the subsequent fact is known and one can show it by following \cite[Exercise 3.7.6]{DS} and noticing that $\frac{a+\bm{i}}{a^2+1}$ and $\frac{a+\bm{i}}{t}$ are in the same $\Gamma_0(t)$-orbit.

\begin{lemma}\label{11_modular-curve-lemma}
Consider $t\geq 2$, then the order-two elliptic points in $\Gamma_0(t)\git\mathbb{H}$ are
$$\Gamma_0(t)\frac{-a+\bm{i}}{t}$$
where $a\in[1,t]$ is an integer such that $a^2+1\equiv0\pmod{t}$. 

\begin{proof}
Here we only sketch an argument that $\frac{a+\bm{i}}{a^2+1}$ and $\frac{a+\bm{i}}{t}$ are in the same $\Gamma_0(t)$-orbit. One takes the integer $n\geq 2$ such that $a^2+1=(n-1)t$, then using the unique factorization in $\bz[\bm{i}]$ one can find a pair of integers $(p,q)$ such that
$$p^2+q^2=n-1$$
and $-q+ap=u(n-1)$ for some integer $u$ (see later in Proposition \ref{11_BE-locus-square-2-vector}). 

Then one can see that 
$$\begin{pmatrix}
ap+q&-p\\
pt&-u
\end{pmatrix}\frac{a+\bm{i}}{a^2+1}=\frac{a+\bm{i}}{t}$$
in $\bh$ and therefore conclude.
\end{proof}
\end{lemma}

It allows us to make the following observation.

\begin{proposition}\label{11_involution-description}
Consider a K3 surface $S$ with Picard rank one and degree $2t$ for some $t\geq 2$, then one can find a representative $\Phi$ for any conjugacy class of non-trivial involutions on $\Aut(\DbS)$ such that 
\begin{enumerate}
	\item\label{11_involution-description(1)} the Hodge isometry $\phi$ on $\MukaiZ{S}$ induced by $\Phi$ preserves the vectors 
	\begin{center}
	$v_1=(1,0,1-n)$ and $v_2=(a,-(n-1)H,(n-1)a)$
	\end{center}
	for some $n\geq2$ and acts on the sublattice $v_1^{\perp}\cap v_2^{\perp}\subset\MukaiZ{S}$ as $v\mapsto -v$;
	\item the Hodge isometry $\bar{\phi}$ on $\Num(S)$ induced by $\Phi$ preserves the function
	$$Z_{t,a}(r,\Delta,s)=-\left(\frac{a^2-1}{t}r+\frac{a}{t}\Delta\cdot H+s\right)+\left(\frac{2ar+\Delta\cdot H}{t}\right)\bm{i}$$
    where $H$ is the degree $2t$ ample line bundle on $S$.
\end{enumerate}

\begin{proof}
A solution $a\in[1,t]$ such that $a^2+1\equiv0\pmod{t}$ will give a minimal positive solution $(a,1)$ for the negative Pell's equation
$$
X^2-t(n-1)Y^2=-1
$$
where $n:=\frac{a^2+1}{t}+1\geq 2$. By virtue of \cite[Section 3]{Liu26b}, it gives a non-trivial Hodge involution $\bar{\phi}$ on $\Num(S)$ whose action on $\bh$ can be described as
$$z\mapsto-\frac{az+(n-1)}{tz+a}$$
and whose fixed point is exactly $\frac{-a+\bm{i}}{t}$. In this case, the map $\phi$ described in (1) is indeed a Hodge isometry in $\OO^+(\MukaiZ{S})$ and restricts to $\bar{\phi}$. 

It is checked in \cite[Proposition 3.7]{Liu26b} that (2) is true, and $Z_{t,a}$ is the central charge of a stability condition 
$$\sigma_{t,a}:=\sigma_{\frac{1}{t}H,-\frac{a}{t}H}$$
in $\Stab^{\dag}(S)$ according to \cite[Proposition 3.8]{Liu26b}. So one can lift $\phi$ to a non-trivial involution $\Phi$ on $\DbS$ using Theorem \ref{02_K3-Hu-conway} (see also \cite[Proposition 3.16]{Liu26b}). Then one concludes by Lemma \ref{11_modular-curve-lemma} for $t\geq 5$ and by Example \ref{11_modular-curve-t=2} for $t=2$.
\end{proof}
\end{proposition}

\begin{remark}
One notices that $(t,-aH,t(n-1))$ is always contained in $v_1^{\perp}\cap v_2^{\perp}$ and it is a vector of square $-2t$.
\end{remark}

In particular, one has a finite set $\HR(2t)$ for every $t\geq 2$ which contains the Hodge isometries in Proposition \ref{11_involution-description} \eqref{11_involution-description(1)}. Moreover, one has

\begin{proposition}\label{11_BE-locus-square-2-vector}
Consider integers $n,t\geq2$ and $a>0$ such that
$$a^2-t(n-1)=-1$$
then the invariant lattice of the Hodge isometry $\phi$ in Proposition \ref{11_involution-description} \eqref{11_involution-description(1)} is spanned by two orthogonal square 2 vectors 
\begin{displaymath}
		\bar{v}_1=(\frac{p+aq}{n-1},-qH,-p+aq)\quad\textup{and}\quad\bar{v}_2=(\frac{q-ap}{n-1},pH,-q-ap)
\end{displaymath}
where $(p,q)$ is a pair of integers such that $p^2+q^2=n-1$ and $p+aq\equiv0\pmod{n-1}$.
	
\begin{proof}
It is sufficient to find a pair $(p,q)$ of integers satisfying the aforementioned conditions. One claims that there exists a factorization
\begin{equation}\label{11_BE-locus-square-2-vector-factor}
a+\bm{i}=(u+v\bm{i})(p-q\bm{i})
\end{equation}
in the unique factorization domain $\bz[\bm{i}]$, such that $u^2+v^2=t$. In this case, one has $p^2+q^2=n-1$. Also, one can see $up+vq=a$ and $vp-uq=1$ from the factorization of $a+\bm{i}$, so that $p+aq=v(p^2+q^2)=v(n-1)$. Similarly, one has
$$-q+ap=-q+up^2+vpq=u(p^2+q^2)=u(n-1)$$
thus $\bar{v}_2$ is also an integral vector. 
		
It remains to show the claim. A prime factor of $t$ is either $2$ or has the form $4k+1$, and one has $2=(1+\bm{i})\cdot(1-\bm{i})$ and $4k+1=\bm{z}\cdot\bar{\bm{z}}$ in $\bz[\bm{i}]$. So a prime factor of $t$ cannot divide $a+\bm{i}$ and $a-\bm{i}$ at the same time. Then one concludes.
\end{proof}
\end{proposition}

\begin{remark}
It means that the invariant lattice is always isomorphic to $\Ala_1^2$ and there are no isotropic (i.e.~square zero) or spherical vectors in it.
\end{remark}

\begin{remark}
Moreover, the Hodge isometries in $\HR(2t)$ can be defined on the Mukai lattice $\MukaiZ{S}$ for any polarized K3 surfaces $(S,H)$ in $\MKK(2t)$.
\end{remark}

One can fix the integers $p,q,u,v$ in the factorization \eqref{11_BE-locus-square-2-vector-factor} for any valid $(t,a)$ by assuming that $p$ and $q$ are non-negative. Henceforth, we will fix the choices of integers $p,q,u,v$ for any pair $(t,a)$ such that $\MBE(2t)_a$ is non-empty.

\begin{corollary}\label{11_BE-central-charge}
Suppose that $\MBE(2t)_a$ is non-empty for some $t\geq 2$, then
$$Z_{t,a}(r,\Delta,s)=\frac{1}{t}(v+u\sqrt{-1})(e_1+e_2\sqrt{-1})$$
where $e_i:=\langle(r,\Delta,s), \bar{v}_i\rangle$.
\end{corollary}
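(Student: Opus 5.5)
This is a direct computation: expand both sides as linear functions of $(r,\Delta,s)$ and match coefficients. Since only $\Delta\cdot H$ enters, we may write $x:=\Delta\cdot H$.

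First we compute the pairings with the Mukai pairing $\langle(r_1,\alpha_1,s_1),(r_2,\alpha_2,s_2)\rangle=\alpha_1\cdot\alpha_2-r_1s_2-r_2s_1$. For $\bar v_1=(\frac{p+aq}{n-1},-qH,-p+aq)$ and $\bar v_2=(\frac{q-ap}{n-1},pH,-q-ap)$ from Proposition \ref{11_BE-locus-square-2-vector}, we use the identities established in its proof: $p+aq=v(n-1)$ and $-q+ap=u(n-1)$. These give
$$e_1=-qx-r(aq-p)-vs,\qquad e_2=px+r(q+ap)+us.$$

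Next we expand
$$(v+u\bm{i})(e_1+e_2\bm{i})=(ve_1-ue_2)+(ue_1+ve_2)\bm{i}.$$
The coefficient of $s$ in the real part is $-(u^2+v^2)=-t$, and in the imaginary part it is $-uv+uv=0$. The $x$-coefficients are $-(vq+up)=-a$ in the real part and $-uq+vp=1$ in the imaginary part. Here we use $up+vq=a$ and $vp-uq=1$, which come from comparing real and imaginary parts in the factorization \eqref{11_BE-locus-square-2-vector-factor}.

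For the $r$-coefficients, the imaginary part is $-u(aq-p)+v(q+ap)=a(vp-uq)+(up+vq)=a+a=2a$. The real part is $-v(aq-p)-u(q+ap)=-a(vq+up)+(vp-uq)=-a^2+1$. Dividing everything by $t$ recovers exactly $Z_{t,a}(r,\Delta,s)$: real part $-\bigl(\frac{a^2-1}{t}r+\frac{a}{t}x+s\bigr)$ and imaginary part $\frac{2ar+x}{t}$.

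There is no real obstacle. The only care needed is keeping the sign conventions of the Mukai pairing straight and using precisely the identities $up+vq=a$, $vp-uq=1$ and $u^2+v^2=t$ from the fixed factorization. The non-emptiness hypothesis enters only to guarantee that this factorization exists, i.e.\ that $t$ is a sum of two squares compatible with $a$.
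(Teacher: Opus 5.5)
Your proposal is correct and is the direct coefficient comparison the paper intends; the paper states this corollary without proof, as an immediate consequence of Proposition \ref{11_BE-locus-square-2-vector}. Rewriting $\bar v_1=(v,-qH,aq-p)$ and $\bar v_2=(-u,pH,-q-ap)$ via $p+aq=v(n-1)$ and $ap-q=u(n-1)$, and then using $up+vq=a$, $vp-uq=1$ and $u^2+v^2=t$, matches every coefficient of $Z_{t,a}$ exactly as you show.
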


The following observation is also interesting. Here the K3 surface $S$ can have arbitrary Picard rank but we still need $t\geq 2$.

\begin{proposition}\label{11_BE-large-volume-limit}
The moduli space $\modulis_H(\bar{v}_1)$ only contains stable sheaves and is therefore a $\KK^{[2]}$-type hyperkähler manifold.

\begin{proof}
Suppose otherwise that some element in $\modulis_H(\bar{v}_1)$ admits a non-trivial Jordan--Hölder factor, say with Mukai vector $w=(r,\Delta,s)$. Then
$$\frac{s+r}{r}=\frac{-p+aq+v}{v}$$
so that $sv=(-p+aq)r$. One must have $v|(-p+aq)r$ as $s$ is an integer. On the other hand, one notices that $v$ and $-p+aq$ are coprime as
$$tq^2-v(-p+aq)=1$$
therefore $v$ is a factor of $r$. However, one has $0<r<v$, absurd.
\end{proof}
\end{proposition}

Special cases of this proposition have already been checked in \cite{Liu26a,Liu26b,LZ25} and we summarize some of them in the following example.

\begin{example}\label{11_BE-example}
Suppose that $n=2$ and $t$ is a prime in Proposition \ref{11_BE-locus-square-2-vector}, then one can see $\bar{v}_1=(1,0,-1)$ and $\bar{v}_2=(-a,H,-a)$. Then $\modulis_H(\bar{v}_1)\cong S^{[2]}$. On the other hand, the moduli space $\modulis_H(-\bar{v}_2)$ contains a strictly semistable vector bundle
$$\sho_S(-E)\oplus\sho_S(E-H)$$
for a K3 surface $(S,H)$ of degree $10$ and a class $E$ satisfying $E^2=0$ and $E\cdot H=5$.
\end{example}

Now we are prepared to reach our first main theorem.

\subsection{The Bridgeland--Enriques general loci}
Here we are going to describe the Bridgeland--Enriques general locus $\MBE(2t)$ for $t\geq 2$. 

One recalls that the \emph{Noether--Lefschetz divisor} $\NL^{2t}_{n,2d}$ is a closed subset in the moduli space $\MKK(2t)$ of polarized K3 surfaces of degree $2t$ containing any K3 surface $(S,H)$ which has a divisor $D$ with $D^2=2d$ and $D\cdot H=n$.

\begin{proposition}\label{11_main-result-1-lemma1}
Consider a polarized K3 surface $(S,H)$ of degree $2t$ (with $t\geq 2$) and an integer $a\in [1,t]$ such that 
$$a^2+1\equiv0\pmod{t}$$ 
then the pair $(\omega,\beta)=(H/t,-aH/t)$ gives a stability condition $\sigma_{\omega,\beta}$ as in Theorem \ref{02_K3-Bridgeland} if and only if $(S,H)$ is contained in the open locus
$$\MBE(2t)_a:=\MKK(2t)-\bigcup_{(n,d)\in I_{t,a}}\NL_{n,2d}^{2t}$$
where $I_{t,a}$ is a finite set of pairs $(n,d)$ of integers satisfying the inequalities
$$4a^2t\geq n^2\geq 4dt$$
and the condition $n/2a\in\bz_+$.

\begin{proof}
According to Theorem \ref{02_K3-Bridgeland}, one has the stability condition $\sigma_{t,a}:=\sigma_{H/t,-aH/t}$ if and only if $Z_{t,a}(v(\shf))\notin\br_{\leq0}$ for any spherical sheaf $\shf$ on $S$.

Suppose otherwise, then one has a class $(r,\Delta, s)$ in $\Num(S)$ such that
$$r > 0,\quad \Delta^2 = 2rs - 2,\quad \Delta \cdot H = -2ar,\quad (a^2-1)r + a\Delta \cdot H + ts \geq 0$$
at the same time. It directly follows that
$$(2a^2+2)r^2-t(\Delta^2+2)\leq0$$
and the Hodge index theorem implies $2a^2r^2\geq t\Delta^2$. Then one has $r^2\leq t$, and there are only finitely many choices of such $r$ as $r>0$. Also, it is direct to see that there are only finitely many possible values of $\Delta^2$ for each valid $r$. So one concludes.
\end{proof}
\end{proposition}

In particular, we prove that the function $Z_{t,a}$ is the central charge of some stability condition in $\Stab^{\dag}(S)$ if and only if $(S,H)$ is in $\MBE(2t)_a$. 

\begin{proposition}\label{11_main-result-1-lemma2}
Consider some $(S,H)$ in $\MBE(2t)_a$ with $t\geq 2$ and the stability condition $\sigma_{t,a}:=\sigma_{H/t,-aH/t}$, then the moduli space $\modulis_{\sigma_{t,a}}(\bar{v}_1)$ is either a hyperkähler manifold of $\KK^{[2]}$-type with a degree $2$ polarization or the flop contraction of a $\KK^{[2]}$-type hyperkähler manifold $\tilde{\modulis}_{t,a}$.

\begin{proof}
The moduli space $\modulis_{\sigma_{t,a}}(\bar{v}_1)$ is a $\KK^{[2]}$-type hyperkähler manifold if it does not contain any strictly semistable object. In this case, the $\Stwo$-action on $\cate{D}^b(S)$ induces a non-symplectic involution $f$ on $\modulis_{\sigma_{t,a}}(\bar{v}_1)$. The $f^*$-invariant sublattice in $\HH^2(\modulis_{\sigma_{t,a}}(\bar{v}_1),\bz)$ is spanned by a square two class $\theta$ according to Proposition \ref{11_BE-locus-square-2-vector} and \cite[Proposition 3.5]{Ou18}. In particular, either $\theta$ or $-\theta$ is ample. 

Suppose that an element $E$ in $\modulis_{\sigma_{t,a}}(\bar{v}_1)$ is strictly semistable, then one can take a non-trivial Jordan--Hölder factor thereof with Mukai vector $w$.

According to Corollary \ref{11_BE-central-charge}, one must have
$$Z_{t,a}(\bar{v}_1-w)=Z_{t,a}(w)=\frac{1}{2}Z_{t,a}(\bar{v}_1)=\frac{v}{t}+\frac{u}{t}\sqrt{-1}$$
since $u,v$ are coprime and $\Re Z_{t,a}(w),\Im Z_{t,a}(w)\in\frac{1}{t}\bz$ by construction. It follows that the strictly semistable object $E$ has exactly two distinct Jordan--Hölder factors say $A$ and $B$ with Mukai vector $w$ and $\bar{v}_1-w$ respectively, such that
$$w=\frac{\bar{v}_1+w_0}{2}$$
for some $w_0\in\ker Z_{t,a}\cap\Num(S)$ satisfying $w_0^2=4w^2-2$ again by Corollary \ref{11_BE-central-charge}. 

Since
$$w^2=(\bar{v}_1-w)^2$$
one must have $w^2\in\{0,-2\}$ by Mukai lemma.
One notices that $w$ cannot be an isotropic vector otherwise $w_0^2=-2$, contradicting the fact that $\sigma_{t,a}$ is a stability condition (see Theorem \ref{02_K3-Bridgeland}). It means that $A$ and $B$ are spherical. One notices that $w\cdot (\bar{v}_1-w)=3$ and it is not hard to conclude $\Ext^1(A,B)=\bc^3$.

In particular, the singular locus of $\modulis_{\sigma_{t,a}}(\bar{v}_1)$ is a finite subset and the points in it are represented by direct sums of two spherical stable objects. In a small enough neighborhood of $\sigma_{t,a}$ in the complex manifold $\Stab^{\dag}(S)$ one can find a $\bar{v}_1$-generic stability condition $\sigma$. Then the moduli space $\tilde{\modulis}_{t,a}:=\modulis_{\sigma}(\bar{v}_1)$ is a $\KK^{[2]}$-type hyperkähler manifold and one has a flop contraction 
$$\tilde{\modulis}_{t,a}\rightarrow\modulis_{\sigma_{t,a}}(\bar{v}_1)$$
induced by wall-crossing whose contracted locus is a union of some $\bp^2$.
\end{proof}
\end{proposition}

It is also expected that $\tilde{\modulis}_{t,a}(\bar{v}_1)$ is birational to $\modulis_H(\bar{v}_1)$ by showing that there are no totally semistable walls along $\sigma_{\lambda/t H,-a/t H}$ for $\lambda\geq 1$.

\begin{example}\label{11_main-result-1-lemma2-example}
Suppose in addition that $S$ has Picard rank one, then 
$$\ker Z_{t,a}=\bz(t,-aH,t(n-1))$$
and $\modulis_{\sigma_{t,a}}(\bar{v}_1)$ is not a hyperkähler manifold only when $t=5$ and $n=2$. In this case, one falls into the situation of \cite[Section 6.5]{Liu26a} and \cite[Section 5]{Liu26b}.
\end{example}

Now for any $(S,H)$ in $\MBE(2t)_a$, one has a stability condition $\sigma_{t,a}$ and the unique Hodge isometry $\phi$ in $\HR(2t)$ fixing its central charge $Z_{t,a}$. So one can find a non-symplectic involution $\Phi$ on $\cate{D}^b(S)$ realizing $\phi$ using Theorem \ref{02_K3-Hu-conway}.

\begin{theorem}[Theorem \ref{00_main-result-1}]\label{11_main-result-1}
Choose $t\geq 2$, the locus $\MBE(2t)$ in $\MKK(2t)$ is the union of open loci $\MBE(2t)_a$ described in Proposition \ref{11_main-result-1-lemma1}.

\begin{proof}
It is direct to see that 
$$\MBE(2t)\subset\bigcup_{a^2+1\equiv0\pmod{t} \atop a\in[1,t]} \MBE(2t)_a$$
from the definition of Bridgeland--Enriques general K3 surfaces. 

Conversely, one considers the non-symplectic involution $\Phi$ on $\DbS$ defined uniformly for polarized K3 surfaces in $\MBE(2t)_a$ as above. Then we claim that the involution $\Phi$ defines an $\Stwo$-action on $\DbS$, and it reduces to show that $\Phi$ fixes an object in $\cate{D}^b(S)$ satisfying $\Hom(E,E)=\bc$ by virtue of Proposition \ref{02_Enriques-group-action-cri}.

According to Proposition \ref{11_main-result-1-lemma2} and \cite{Bea11}, the involution $\Phi$ does not fix a simple object in the moduli space $\modulis_{\sigma_{t,a}}(\bar{v}_1)$ only if $\modulis_{\sigma_{t,a}}(\bar{v}_1)$ is singular and $\Phi$ only fixes some strictly semistable points in it. Such a point $p$ is represented by $A\oplus B$ for two spherical objects such that $\Phi(A)\cong B$ and $\Ext^1(A,B)\cong \Ext^1(B,A)\cong\bc^3$.

In the remainder of this argument, we will show that the involution $\Phi$ cannot fix only some strictly semistable points in $\modulis_{\sigma_{t,a}}(\bar{v}_1)$.

By virtue of \cite[Theorem 1.1]{AS18} and \cite[Corollary 4.1]{AS25}, the local analytic model of $\modulis_{\sigma_{t,a}}(\bar{v}_1)$ at $p$ should be the standard 4-dimensional symplectic singularity
$$\{(x,y)\in\Ext^1(B,A)\oplus\Ext^1(B,A)^\vee\,|\,y(x)=0\}\agit\bc^\times$$
where $\mu(x,y)=y(x)$ and the $\bc^\times$-action is given by $\lambda\cdot(x,y)=(\lambda x,\lambda^{-1}y)$.

By construction, the involution $\Phi$ induces a linear isomorphism
$$g\colon \Ext^1(B,A)\rightarrow\Ext^1(B,A)^\vee\cong\Ext^1(A,B)$$
and it acts on the local model around $p$ by $(x,y)\mapsto(g^{-1}y,gx)$. So one can see that the fixed points are described by the quadratic cone
$$\{x\in \Ext^1(B,A)\,|\,g(x)(x)=0\}$$
which is non-trivial as $\Ext^1(B,A)\cong\bc^3$. It means that $\Phi$ has to fix a stable point in the moduli space $\modulis_{\sigma_{t,a}}(\bar{v}_1)$ near $p$, a contradiction.
\end{proof}
\end{theorem}

\begin{remark}
It is expected that the involution on $\modulis_{\sigma_{t,a}}(\bar{v}_1)$ induced by $\Phi$ fixes an irreducible singular surface, similar to \cite[3.4]{Bea11}. 
\end{remark}

According to Theorem \ref{02_FL-result}, the locus $\MBE(2t)$ is non-empty only when $t$ is neither divisible by $4$ nor by an odd prime having the form $4k+3$.

\begin{proposition}\label{11_BE-locus-degree-4}
The open locus $\MBE(4)\subset \MKK(4)$ coincides with the locus of quartic K3 surfaces under the canonical polarization induced from $\bp^3$. 

\begin{proof}
One computes using Theorem \ref{11_main-result-1} that $\MBE(4)=\MKK(4)-\NL_{2,0}^4$. So the desired assertion is equivalent to the statement: an ample line bundle $H$ on a K3 surface $S$ with $H^2=4$ is very ample if and only if $(S,H)$ is not in $\NL_{2,0}^4$. This is nothing but the Reider's method for degree $4$ K3 surfaces \cite[page 35-37]{MoL}.
\end{proof}
\end{proposition}

According to \cite{KP17}, one has the identification
$$\DbS_{\Stwo}\cong\schb_Y$$
where $\schb_Y:=\langle\sho_Y,\sho_Y(1)\rangle^{\perp}\subset\cate{D}^b(Y)$ is the \emph{Kuznetsov component} of the double cover $f\colon Y\rightarrow\bp^3$ branched over the quartic surface $S$ with $\sho_Y(1)=f^*\sho_{\bp^3}(1)$. The threefold $Y$ is usually called the \emph{quartic double solid} over $S$.

\subsection{The families of group actions}
In this subsection, we will choose integers $t\geq 2$ and $1\leq a\leq t$ such that $\MBE(2t)_a$ is non-empty, and consider the stability condition $\sigma_{t,a}:=\sigma_{H/t,-aH/t}$ on any $(S,H)$ in $\MBE(2t)_a$.

\begin{proposition}\label{11_M(t,-aH,a^2)-property}
Consider some polarized K3 surface $(S,H)$ in $\MBE(2t)_a$, then the moduli space $\modulis_H(t,-aH,a^2)$ of semistable sheaves on $S$ is a fine moduli space and contains only stable vector bundles.

\begin{proof}
To begin with, one can check that $\modulis_H(t,-aH,a^2)$ is a fine moduli space using the fact that $a$ and $t$ are coprime and the criterion \cite[Theorem 4.6.5]{HL}.
	
Then one notices that the moduli space $\modulis_H(t,-aH,a^2)$ does not contain strictly semistable sheaves. Otherwise, one chooses a strictly semistable $\shh$ and a stable factor $\shg$ thereof with $v(\shg)=(r,\Delta,s)$ for some $0<r<t$. Then
$$\frac{\chi(\shg)}{r}=\frac{\chi(\shh)}{t}$$
which means that $st=a^2r$. So one has $t|a^2r$. However, the integers $a$ and $t$ are coprime since $a^2+1\equiv0\pmod{t}$. It means that $t|r$, absurd. 

In the end, one checks that any stable sheaf $\shh$ in $\modulis_H(t,-aH,a^2)$ is a vector bundle. Otherwise, one can find a short exact sequence
$$0\rightarrow\shh\rightarrow\shg\rightarrow\shq\rightarrow0$$
where $\shq$ is a zero-dimensional torsion with length $m\geq 1$ and $\shg:=(\shh^\vee)^\vee$ is a stable vector bundle on $S$. Since $t\geq 2$, it follows that
$$v(\shg)^2=-2tm\leq -4$$
which violates the Mukai lemma.
\end{proof}
\end{proposition}

Also, one makes the following important observation.

\begin{corollary}\label{11_M(t,-aH,a^2)-stab(S)}
Consider a polarized K3 surface $(S,H)$ in $\MBE(2t)_a$, then
$$\modulis_{\sigma_{\lambda H/t,-aH/t}}(-t,aH,-a^2)=\modulis_H(t,-aH,a^2)[1]$$
for any $\lambda\geq 1$.

\begin{proof}
One notices that $Z_{\lambda H/t,-aH/t}(-t,aH,-a^2)=-\lambda^2$, then the desired statement follows from Proposition \ref{11_M(t,-aH,a^2)-property} and \cite[Proposition 10.1~(b)]{Bri08}.
\end{proof}
\end{corollary}

Moreover, one can see that the moduli space is isomorphic to $S$.

\begin{proposition}\label{11_M(t,-aH,a^2)-iso-S}
One has $S\cong\modulis_H(t,-aH,a^2)$ for any $(S,H)$ in $\MBE(2t)_a$.

\begin{proof}
According to Theorem \ref{11_bridgeland-hyperk-K3} and Proposition \ref{11_M(t,-aH,a^2)-property}, the moduli space 
$$\modulis_H(t,-aH,a^2)$$ 
is a K3 surface such that $\HH^2(M,\bz)\cong v_0^{\perp}/\bz\cdot v_0$ for the vector $v_0=(t,-aH,a^2)$. 

Suppose that $S$ has Picard rank one, then 
$$\modulis_H(t,-aH,a^2)\cong \modulis_H(t,H,1)\cong \modulis_H(1,H,t)\cong S$$
where the first isomorphism is due to \cite[Proposition 2.1]{MMY20} and the second one can be checked directly. In particular, one has an isometry
$$ \varphi\colon\HH^2(S,\bz)\rightarrow \HH^2(M,\bz)\cong v_0^{\perp}/\bz\cdot v_0=\langle u_0,v_0\rangle^{\perp}\subset \MukaiZ{S}$$
where $u_0=(1,0,-m)$ for the integer $m:=(a^2+1)/t$. This isometry $\varphi$ can be described uniformly as $\kappa\mapsto(0,-\kappa+m(\kappa\cdot H)H,0)-a(\kappa\cdot H)(1,0,m)$.

Since the locus of Picard rank one K3 surfaces is dense in the connected open locus $\MBE(2t)_a$ and $\modulis_H(t,-aH,a^2)$ is fine for any $(S,H)$ in $\MBE(2t)_a$, one can spread the isometry $\varphi$ to any element in $\MBE(2t)_a$ using the relative universal family with respect to the corresponding relative moduli space over $\MBE(2t)_a$.
\end{proof}
\end{proposition}

It allows us to reach the following generalization of \cite[Proposition 3.16]{Liu26b}.

\begin{proposition}\label{11_involution-absolute}
The involution $\Phi$ fixing the stability condition $\sigma_{t,a}$ as in Proposition \ref{11_involution-description} is isomorphic to the Fourier--Mukai transform $\Phi_{\shu[1]}$ where $\shu$ is the universal family on $S\times \modulis_H(t,-aH,a^2)$.

\begin{proof}
Using the argument of \cite[Corollary 10.12]{Hu}, there exists a universal family $\shu$ on $S\times M$ for the fine moduli space $M:=\modulis_H(t,-aH,a^2)$ such that the Fourier--Mukai transform $\Psi:=\Phi_{\shu[1]}$ induces the same Hodge isometry as $\Phi^{\coho}$. Here we use the fact that $S\cong M$ as in Proposition \ref{11_M(t,-aH,a^2)-iso-S}. The autoequivalence $\Psi$ preserves $\Stab^{\dag}(S)$ due to \cite[Proposition 4.2]{Hu08}. Moreover, one checks
$$\Psi.\sigma_{t,a}=\sigma_{t,a}$$
by inspecting the central charges and applying Corollary \ref{11_M(t,-aH,a^2)-stab(S)}. Therefore one can conclude $\Psi\cong\Phi$ using Theorem \ref{02_K3-Hu-conway}.
\end{proof}
\end{proposition}

This property can be globalized into a family as follows.

\begin{theorem}[Theorem \ref{00_main-result-1.2}]\label{11_main-result-1.2}
Consider an integer $t\geq 2$ and $1\leq a\leq t$ such that the open locus $\MBE(2t)_a$ is non-empty, then for any family $p\colon\schs\rightarrow B$ of polarized K3 surfaces over a smooth connected variety with fibers in $\MBE(2t)_a$ one can define an autoequivalence $\Pi$ on the category of perfect complexes $\cate{D}_p(\schs)$, such that $\Pi^2\cong p^*\shl$ for some line bundle $\shl$ on $B$ and the base change of $\Pi$ for each point $b\rightarrow B$ induces the $\Stwo$-action on $\cate{D}^b(\schs_b)$ in Definition \ref{00_main-definition} and fixes $\sigma_{t,a}$.

\begin{proof}
The morphism $\schs\rightarrow B$ is projective, so one can consider the moduli space of relative semistable sheaves on $\schs$ as in \cite[Theorem 4.3.7]{HL}.

One takes the relative ample polarization $\schh$ for $\schs\rightarrow B$ and considers the moduli space $\schm=\modulis_{\schh}(t,-a\schh,a^2)$. Then there exists a relative universal family 
$$\schu\in\cate{D}^b(\schs\times_B\schm)$$
according to the construction of $\schm$ and Proposition \ref{11_M(t,-aH,a^2)-property}.

The family $\mathcal U$ induces a relative Mukai homomorphism whose fiber is the Mukai isometry used in Proposition
\ref{11_M(t,-aH,a^2)-iso-S}. Equivalently, up to taking the inverse according to
the convention for pullbacks, it is described by the uniform formula
$$
\kappa\longmapsto
(0,-\kappa+m(\kappa\cdot H)H,0)-a(\kappa\cdot H)(1,0,m)
$$
for $m:=(a^2+1)/t$. Since $\mathcal H$ is a relative class and the isometry is constant, these fiberwise isometries are monodromy-compatible. Thus they form a single global isometry of integral variations of Hodge structures $
\lambda\colon\deriver^2q_*\bz
\stackrel{\sim}{\longrightarrow}
\deriver^2p_*\bz$ for the two families $p\colon\schs\rightarrow B$ and $q\colon\schm\rightarrow B$ of polarized K3 surfaces.

This isometry $\lambda$ induces an isomorphism $f\colon \schs\rightarrow\schm$ such that $f^*_b=\lambda_b$ using the standard polarized relative isomorphism functor and the global Torelli theorem.

Consequently, the universal family $\schu$ induces a Fourier--Mukai functor
$$
\Pi:=\Phi_{\mathcal U[1]}\colon
\cate{D}_p(\mathcal S)\longrightarrow\cate{D}_p(\mathcal S)
$$
such that the base change $\Pi_b$ thereof on $\cate{D}^b(\schs_b)$ for any $b\rightarrow B$ is the involution in Proposition \ref{11_involution-absolute} which induces the desired $\Stwo$-action on $\cate{D}^b(\schs_b)$ and fixes $\sigma_{t,a}$. 

One checks that $\Pi$ is an autoequivalence as in \cite[Proposition 2.15]{HLS09} and the composite $\Pi^2$ is isomorphic to the tensor product with $p^*\shl$ for some line bundle $\shl$ on $B$ by adapting the argument for \cite[Corollary 5.23]{Hu}.
\end{proof}
\end{theorem}

\begin{remark}\label{11_main-result-1.2-rem}
Consequently, the involution $\Pi$ étale locally induces an $\Stwo$-action on the family $\schs$ according to \cite[Proposition 3.9]{BP23}.
\end{remark}

The author has been informed that this local action can be made global for families of polarized K3 surfaces in $\MBE(2t)_a$ by the upcoming work \cite{BPPZ26}. 

\subsection{The universal BE general K3 surface}
To finish the section, we discuss a possible universal family of K3 surfaces in $\MBE(2t)_a$ for $t\geq 2$. Here one recalls that $\MBE(2t)_a$ is only a coarse moduli space (see e.g.~\cite{HuK3}) so it does not admit a universal family in the usual sense. In this case, we need to instead use universal families for the Hilbert schemes of the projective spaces.

\begin{proposition}
The polarization $H$ is very ample for any Bridgeland--Enriques general K3 surface $(S,H)$ in $\MBE(2t)$ if $t\geq 2$.

\begin{proof}
According to \cite{Rei88,Sa74} and the fact that $t$ is not divisible by $4$, the ample class $H$ is not very ample only if there exists a divisor class $E$ in $\NS(S)$ satisfying $E^2=0$ and $E\cdot H$ is $1$ or $2$. It is impossible by Proposition \ref{11_main-result-1-lemma1}.
\end{proof}
\end{proposition} 

Therefore, the polarization $H$ induces an embedding
$$S\hookrightarrow|H|^\vee=\bp^{t+1}$$
for any $(S,H)$ in $\MBE(2t)$ with $t\geq 2$. The corresponding Hilbert polynomial is
$$P_{2t}(m)=\chi(S,\sho_S(mH))=tm^2+2$$
and one can define an open subscheme $$B_{2t,a}^{\BE}\subset\Hilb_{tm^2+2}(\bp^{t+1})$$ 
parameterizing the Bridgeland--Enriques general K3 surfaces in $\MBE(2t)_a$ together with the embedding information. This gives a universal object
$$\schs_{2t,a}^{\BE}\subset\bp^{t+1}\times B_{2t,a}^{\BE}$$
which is the restriction of the tautological Hilbert subscheme. 

\begin{example}
The universal object for $\MBE(4)$ is the universal quartic surface.
\end{example}

The universal object for $\MBE(2t)_a$ can be endowed with an autoequivalence as in Theorem \ref{11_main-result-1.2}. However, it is difficult to write down a clear description of the universal object when the degree $2t$ is larger than $4$. In fact, there has already been a degeneration phenomenon on $\MBE(10)$.

\section{Degeneration of special Gushel--Mukai Threefolds}\label{000_Section-12}
In this section, we will discuss the categorical degeneration phenomenon encoded in the open locus $\MBE(10)$.

\subsection{The categorical degeneration}
To describe the categorical degeneration one can get from Theorem \ref{00_main-result-2.2}, we need to figure out $\MBE(10)$.

\begin{proposition}[Proposition \ref{00_main-result-2.1}]\label{12_main-result-2.1}
A polarized K3 surface $(S,H)$ in $\MBE(10)$ belongs to one of the following two cases:
\begin{enumerate}
	\item\label{12_main-result-2.1(1)} a Brill--Noether general surface of degree $10$ which is not in $\NL_{4,0}^{10}$;
	\item\label{12_main-result-2.1(2)} a quartic K3 surface containing an elliptic curve $E$ such that $H\cdot E=3$.
\end{enumerate}
Conversely, any polarized K3 surface in (1) is contained in $\MBE(10)$, and a polarized K3 surface in (2) is in $\MBE(10)$ if one has $\sho_S(H-E)\cong\sho_{\bp^3}(1)|_{S}$.

\begin{proof}
One computes using Theorem \ref{11_main-result-1} that
$$\MBE(10)=\MKK(10)-\NL_{4,0}^{10}$$
then by \cite[Lemma 2.8]{GLT15} a K3 surface not in (1) must be in $\NL_{3,0}^{10}-\NL_{4,0}^{10}$. One chooses a K3 surface $(S,H)$ in $\NL_{3,0}^{10}-\NL_{4,0}^{10}$ and a class $E\in\NS(S)$ with $E^2=0$ and $E\cdot H=3$, it reduces to show that $D:=H-E$ is very ample.

One first sees that $D$ is ample from the observation that $L:=D-E$ is the class of a rational curve. Then, according to Proposition \ref{11_BE-locus-degree-4}, it suffices to check that there is no class $\Delta\in\NS(S)$ satisfying $\Delta^2=0$ and $\Delta\cdot D=2$. Otherwise, one finds such a class $\Delta$ and divides into cases according to whether $\Delta$ or $-\Delta$ is effective. 

Suppose that $-\Delta$ is effective, then 
\begin{center}
	$\Delta\cdot(H-2E)<0$ and $\Delta\cdot E<0$
\end{center}
as $L:=H-2E$ is effective as well. Therefore, one cannot have $\Delta\cdot D=2$. 

Suppose that $\Delta$ is effective, then $\Delta\cdot E=\Delta\cdot L=1$ due to $\Delta\cdot D=2$. So one can see $\Delta\cdot E=1$ and $\Delta\cdot H=3$. However, it will lead to $(D-\Delta)^2=0$ and $(D-\Delta)\cdot H=4$ since $D\cdot H=7$, which contradicts $(S,H)\notin\NL_{4,0}^{10}$.
\end{proof}
\end{proposition}

The polarized K3 surface $(S,H)$ in Proposition \ref{12_main-result-2.1}~\eqref{12_main-result-2.1(1)} is called a \emph{strongly smooth Gushel--Mukai surface} as in \cite{Beri25,DK18} and vice versa. The very ample class $H$ embeds it into a smooth Fano threefold $M=\Gr(2,5)\cap\bp^6$ such that $S\in|\sho_M(2)|$. 
 
The \emph{special Gushel--Mukai threefold} $X$ over a strongly smooth Gushel--Mukai surface $S$ is the double cover $X\rightarrow M$ branched over the surface $S$. The inverse image $\sho_X(1)$ of the ample line bundle $\sho_{\bp^6}(1)|_M$ along the double cover is a degree ten ample line bundle, and the inverse image $\shu_X$ of the restricted tautological bundle $\shu_{\Gr(2,5)}|_M$ along the double cover is a slope stable vector bundle.

\begin{definition}\label{12_Kuz-special-GM-3-defi}
Let $X$ be a special Gushel--Mukai threefold over a strongly smooth Gushel--Mukai surface $S$, then the \defi{Kuznetsov component} of $X$ is the admissible subcategory $\scha_X:=\langle\sho_X,\shu^\vee_X\rangle^{\perp}\subset\DbX$.
\end{definition}

According to \cite{KP17}, one has $\DbS_{\Stwo}\cong\scha_X$ such that the residual $\Stwo^\vee$-action on $\scha_X$ is induced by the covering involution of the double cover $X\rightarrow M$.

Now let us recall the following theorem stated in the introduction, which will be proved in the next two subsections.

\begin{theorem}[Theorem \ref{00_main-result-2.2}]
Consider a K3 surface $S$ in Proposition \ref{12_main-result-2.1}~\eqref{12_main-result-2.1(2)}, then the Enriques category $\DbS_{\Stwo}$ coming from $\MBE(10)$ is equivalent to the Kuznetsov component $\schb_Y$ of the quartic double solid $Y$ over $S$.
\end{theorem}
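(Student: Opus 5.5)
The plan is to show that the two involutions on $\DbS$ are conjugate, i.e.\ that the involution $\Phi_{10}$ fixing $\sigma_{5,2}$ for $(S,H)\in\MBE(10)_2$ and the involution $\Phi_4$ fixing $\sigma_{2,1}$ for $(S,D)\in\MBE(4)_1$, with $D=H-E$, are conjugate in $\Aut(\DbS)$. Then, by the remark following Proposition \ref{02_Enriques-group-action-cri}, $\DbS_{\Stwo}$ for $\MBE(10)$ is equivalent to the one for $\MBE(4)$, which is $\schb_Y$ by \cite{KP17} and Proposition \ref{11_BE-locus-degree-4}. Here $S$ is as in Proposition \ref{12_main-result-2.1}~\eqref{12_main-result-2.1(2)} with $\sho_S(D)\cong\sho_{\bp^3}(1)|_S$. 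To compare the two involutions, I will work on the rank four lattice $\Omega_S:=\HH^0\oplus\bz H\oplus\bz E\oplus\HH^4\subset\Num(S)$. The key tool is Theorem \ref{02_K3-Hu-conway}: an involution is determined by its Hodge isometry together with a fixed stability condition in $\Stab^\dag(S)$.

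First I will write both Hodge isometries explicitly. The isometry $\phi_{10}\in\HR(10)$ fixes $\bar v_1,\bar v_2$ (for $t=5$, $a=2$, $n=2$) and acts as $-1$ on their orthogonal complement. The isometry $\phi_4\in\HR(4)$ is defined with respect to $D$ and fixes the analogous pair of square-two vectors. Both act as $-\id$ on the transcendental lattice, so the comparison reduces to $\Omega_S$ (or to $\Num(S)$ when the Picard rank is larger). In $\Omega_S$ each invariant lattice is a copy of $\Ala_1^2$, and each anti-invariant lattice is its orthogonal complement. I will look for an explicit $\psi\in\OO^+(\MukaiZ{S})$ with $\psi\phi_{10}\psi^{-1}=\phi_4$, built from a composition of spherical twists and tensoring by line bundles. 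Natural candidates are $\funct{T}_{\sho_S}$, $\funct{T}_{\sho_S(E)}$ and $-\otimes\sho_S(\pm E)$, because $E$ is exactly what distinguishes $H$ from $D$. Since $E$ is elliptic, twisting by $\sho_S(E)$ and using the relative Fourier--Mukai along the elliptic fibration $|E|$ are further options. The concrete target is that $\psi$ sends the invariant lattice $\langle\bar v_1,\bar v_2\rangle$ for $(H,5,2)$ onto the invariant lattice for $(D,2,1)$ and is the identity on $\HH^2_{\mathrm{tr}}$. I expect this to be a finite lattice computation.

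Next I will lift $\psi$ to an autoequivalence $\Psi$ (possible by \cite{HMS09}) and set $\Phi':=\Psi\Phi_{10}\Psi^{-1}$. This $\Phi'$ is an involution with $\rho(\Phi')=\phi_4$, and it fixes $\Psi.\sigma_{5,2}$. Its central charge is $Z_{5,2}\circ\psi^{-1}$, which is $\phi_4$-invariant. If $\Psi.\sigma_{5,2}$ and $\sigma_{2,1}$ lie in the same connected component of the $\phi_4$-fixed locus inside the distinguished component, then by Theorem \ref{02_K3-Hu-conway} and the finiteness of $\Aut(\DbS,\sigma)$ the two involutions with the same cohomological action agree along a continuous path. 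Concretely, $\Phi'\Phi_4^{-1}$ acts trivially on cohomology and fixes a stability condition, so it is trivial, and therefore $\Phi'\cong\Phi_4$.

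The main obstacle is this last step. Because the Picard rank is at least two, it is not known that $\Stab^\dag(S)$ is preserved by $\Psi$, and the $\phi_4$-fixed locus need not be connected. My first attempt will be to pick $\psi$ among twists by spherical objects that are stable for a geometric stability condition, so that $\Psi$ preserves $\Stab^\dag(S)$ (as for $-\otimes$ line bundles and twists $\funct{T}_{\sho_S}$). Then I will connect $\Psi.\sigma_{5,2}$ to $\sigma_{2,1}$ through the family $\sigma_{\omega,\beta}$ with $\omega,\beta\in\langle H,E\rangle_\br$ satisfying the invariance equations. Along this family I will check, using Theorem \ref{02_K3-Bridgeland}, that no spherical class hits $\br_{\le 0}$; this check uses $(S,H)\notin\NL^{10}_{4,0}$ and the very ampleness of $D$. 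If the path argument fails, the fallback is to deform within the Noether--Lefschetz family to the Picard rank two case, where \cite[Proposition 8.9]{BP23} gives the conjugacy, and then to spread it to all $S$ using the relative involutions of Theorem \ref{11_main-result-1.2}.
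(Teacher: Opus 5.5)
Your overall strategy, conjugating the two involutions by an explicit autoequivalence and concluding with Theorem \ref{02_K3-Hu-conway}, is the right one. The lattice part is easy once you pick the right twist. Take the reflection $\psi(x)=x+\langle x,w\rangle w$ in $w=v(\sho_S(-E))=(1,-E,1)$, i.e.\ the cohomological action of $\funct{T}_{\sho_S(-E)}=(-\otimes\sho_S(-E))\circ\funct{T}_{\sho_S}\circ(-\otimes\sho_S(E))$. It satisfies $\psi\phi_{2,1}\psi=\phi_{5,2}$ and $Z_{5,2}\circ\psi=(\tfrac35+\tfrac15\bm{i})\,Z_{2,1}$. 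So at the level of central charges there is nothing to connect, and no path is needed.

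The whole difficulty is to identify \emph{which} point of $\Stab^\dag(S)$ over this central charge $\Psi.\sigma_{5,2}$ is, and here your proposal has a genuine gap. Your plan to connect $\Psi.\sigma_{5,2}$ to $\sigma_{2,1}$ ``through the family $\sigma_{\omega,\beta}$'' tacitly assumes that $\Psi.\sigma_{5,2}$ is itself, up to $\grp$, one of the $\sigma_{\omega,\beta}$, i.e.\ geometric. Nothing in your plan proves this. Arranging that $\Psi$ preserves $\Stab^\dag(S)$ does not settle it either. A point of $\Stab^\dag(S)$ is not determined by its central charge: the map to $\mathfrak{P}^+_0(S)$ is a covering, and cohomologically trivial autoequivalences such as squares of spherical twists move points within a fibre. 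Your check via Theorem \ref{02_K3-Bridgeland} only shows that a geometric lift exists over each central charge on the path, not that $\Psi.\sigma_{5,2}$ is that lift.

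The missing idea is Proposition \ref{12_main-result-2.2-lemma}: $\shi_x(-E)$ is $\sigma_{5,2}$-stable for every $x\in S$. Since $\funct{T}_{\sho_S(-E)}(\sho_x)\cong\shi_x(-E)[1]$, all skyscraper sheaves are stable of the same phase for $\sigma':=\funct{T}^{-1}_{\sho_S(-E)}.\sigma_{5,2}$, so $\sigma'$ is geometric. This places $\sigma'$ in $\Stab^\dag(S)$ without any a priori knowledge that the twist preserves the component. Combined with the central charge identity, it gives $\sigma'.\tilde g=\sigma_{2,1}$ for some $\tilde g\in\grp$. Hence $\funct{T}_{\sho_S(-E)}\circ\Pi_{2,1}\circ\funct{T}^{-1}_{\sho_S(-E)}$ and $\Pi_{5,2}$ both fix $\sigma_{5,2}$ and induce $\phi_{5,2}$, so they are isomorphic by Theorem \ref{02_K3-Hu-conway}.

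Your fallback does not avoid this step. \cite[Proposition 8.9]{BP23} only treats Picard rank two. Spreading the conjugacy to special fibres via the relative involutions of Theorem \ref{11_main-result-1.2} again needs, fibre by fibre, a common fixed stability condition for the conjugated involutions. That is exactly what the stability of $\shi_x(-E)$ provides.
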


One takes a K3 surface $(S,H)$ as in Proposition \ref{12_main-result-2.1}~\eqref{12_main-result-2.1(2)}, then the class $L:=H-2E$ is a line with respect to the quartic surface section $D:=H-E$. One blows up the quartic double solid $Y$ containing $S$ along the line $L$ and denotes by $\tilde{L}$ the exceptional divisor. Then the linear system $\mathfrak{d}:=|\tilde{D}-\tilde{L}|$ on $\tilde{Y}$ is a pencil whose base locus $\tilde{C}$ is a smooth rational curve such that $\tilde{D}\cdot\tilde{C}=1$ and $\tilde{L}\cdot\tilde{C}=2$.

According to \cite{Mel99,PCS05}, the linear system $\mathfrak{d}$ defines a contraction
$$\tilde{Y}\rightarrow X_{\star}\subset\bp^7$$
onto a degenerate Gushel--Mukai threefold $X_{\star}$ with exceptional locus $\tilde{C}$.

\begin{theorem}[Kuznetsov--Shinder \cite{KS25}]
The Kuznetsov component $\schb_Y$ of the quartic double solid $Y$ is equivalent to a subcategory $\bar{\scha}_{X_{\star}}\subset\cate{D}^b(X_{\star})$.
\end{theorem}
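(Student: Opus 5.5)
The statement is the main result of \cite{KS25}, and we only need it as a black box. Still, here is how I would prove it, using the geometry just described: the blow-up $\sigma\colon\tilde{Y}\rightarrow Y$ along the line $L$, and the contraction $\pi\colon\tilde{Y}\rightarrow X_{\star}$ of the smooth rational curve $\tilde{C}$. The idea is to compare two semiorthogonal decompositions of $\cate{D}^b(\tilde{Y})$, one adapted to $\sigma$ and one adapted to $\pi$, and then push the relevant component down to $X_{\star}$.

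\emph{Step 1 (decomposition from $Y$).} Start from Orlov's blow-up formula and $\cate{D}^b(Y)=\langle\schb_Y,\sho_Y,\sho_Y(1)\rangle$. Since $\tilde{L}\rightarrow L\cong\bp^1$ is a $\bp^1$-bundle, this gives
$$\cate{D}^b(\tilde{Y})=\langle \sigma^*\schb_Y,\ \sho_{\tilde{Y}},\ \sho_{\tilde{Y}}(\tilde{D}),\ \sho_{\tilde{L}},\ \sho_{\tilde{L}}(\tilde{D})\rangle,$$
with the last two pieces twisted appropriately. So $\sigma^*\schb_Y$ is the orthogonal of four exceptional objects.

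\emph{Step 2 (decomposition adapted to $\pi$).} By mutations I will rewrite the four exceptional objects so that they become $\pi^*$ of objects on $X_{\star}$, together with one object supported on $\tilde{C}$. The pullbacks should be $\sho_{X_{\star}}$ and the degenerate tautological bundle $\shu^\vee_{X_{\star}}$, which are the natural candidates to cut out $\bar{\scha}_{X_{\star}}$. The object on $\tilde{C}$ should be $\sho_{\tilde{C}}(-1)$, which is exceptional when $\tilde{C}$ is a flopping-type curve with normal bundle $\sho(-1)^2$. If instead the normal bundle is $\sho\oplus\sho(-2)$, then $\sho_{\tilde{C}}(-1)$ is only a "$\bp^\infty$-object" in the sense of categorical absorption, and I would use that framework instead. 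The classes $\tilde{D}-\tilde{L}$, $\tilde{D}$ and $\tilde{L}$ are used to match line bundles: the pencil $\mathfrak{d}$ should give the pullback of the rank-two tautological bundle, via an extension of $\sho(\tilde{D}-\tilde{L})$-type line bundles.

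\emph{Step 3 (push down).} With a decomposition $\cate{D}^b(\tilde{Y})=\langle \tilde{\scha},\ \pi^*\sho_{X_{\star}},\ \pi^*\shu^\vee_{X_{\star}},\ \langle\sho_{\tilde{C}}(-1)\rangle\rangle$ in hand, I will show that $\pi_*$ is fully faithful on the orthogonal of the $\tilde{C}$-supported part. The input is $\deriver\pi_*\sho_{\tilde{Y}}=\sho_{X_{\star}}$ (rational singularity) and the fact that the kernel of $\pi_*$ is generated by $\sho_{\tilde{C}}(-1)$. Then $\pi_*\tilde{\scha}$ lands in $\langle\sho_{X_{\star}},\shu^\vee_{X_{\star}}\rangle^\perp=:\bar{\scha}_{X_{\star}}$ and is equivalent to it. Finally, the mutations in Step 2 identify $\tilde{\scha}\simeq\sigma^*\schb_Y\simeq\schb_Y$.

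The main obstacle is Step 2. The exact sequence of mutations is delicate, and one has to check the Ext-vanishings on $\tilde{Y}$ that make the rearranged objects exceptional in the right order; this needs the intersection numbers $\tilde{D}\cdot\tilde{C}=1$ and $\tilde{L}\cdot\tilde{C}=2$. I would first compute the pushforwards of $\sho(\tilde{D}-\tilde{L})$ and $\sho(\tilde{L})$ along $\pi$. Then I would check by hand, using the normal bundle of $\tilde{C}$ and its restriction classes, that the natural extensions on $\tilde{Y}$ descend to $\shu^\vee_{X_{\star}}$.
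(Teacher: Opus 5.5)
The paper gives no proof of this statement. It is quoted from \cite{KS25}, so using it as a black box is exactly what the paper does. The sketch you add, however, would not prove it, for two related reasons.

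\emph{First, $\sho_{\tilde C}(-1)$ is not exceptional.} Take $N:=N_{\tilde C/\tilde Y}\cong\sho(-1)^{\oplus 2}$, which is the case relevant for a node. Then $\mathcal{E}xt^q(\sho_{\tilde C},\sho_{\tilde C})\cong\Lambda^qN$, and the local-to-global spectral sequence gives $\Ext^\bullet(\sho_{\tilde C}(-1),\sho_{\tilde C}(-1))\cong\bc\oplus\bc[-3]$. So it is a $3$-spherical object. The subcategory it generates has the graded dual numbers as endomorphism algebra, which is not smooth. Hence it can never be a component of a semiorthogonal decomposition of $\cate{D}^b(\tilde Y)$. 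Your fallback case does not help either: since $\tilde Y$ is smooth, no object on it has infinite-dimensional $\Ext^\bullet$, so there is no ``$\bp^\infty$-object'' on $\tilde Y$. The count is also off. Mutations preserve the number of exceptional objects, Step 1 has four, and your target decomposition has only two exceptional objects besides $\tilde{\scha}$.

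\emph{Second, the category you land in is the wrong one.} Since $\sho_{X_\star}$ and $\shu^\vee_{X_\star}$ are perfect, let $F$ be the projection of the skyscraper sheaf $\sho_x$ of the node into $\langle\sho_{X_\star},\shu^\vee_{X_\star}\rangle^\perp\subset\cate{D}^b(X_\star)$. Then $\Ext^i(F,F)\cong\Ext^i(\sho_x,\sho_x)\neq0$ for all $i\gg0$. So this category is not proper, and it cannot be equivalent to $\schb_Y$, which is smooth and proper. The final assertion of Step 3, that $\pi_*\tilde{\scha}$ ``is equivalent to'' it, is therefore false. At best $\pi_*$ embeds $\tilde{\scha}$ as a strictly smaller subcategory.

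\emph{What is missing.} The missing idea is the categorical absorption of singularities on which \cite{KS25} relies. You should aim to mutate the four exceptional objects of Step 1 into three pieces: $\pi^*\sho_{X_\star}$, $\pi^*\shu^\vee_{X_\star}$, and an exceptional pair $(E_1,E_2)$ with $\sho_{\tilde C}(-1)\in\langle E_1,E_2\rangle$. For instance, $\sho_{\tilde C}(-1)$ could be the cone of a morphism $E_1\to E_2$, which forces $\Ext^\bullet(E_1,E_2)\cong\bc\oplus\bc[-2]$. Since $\ker\pi_*=\langle\sho_{\tilde C}(-1)\rangle$, the functor $\pi_*$ collapses $\langle E_1,E_2\rangle$ onto the subcategory generated by one object $P$ with $\Ext^\bullet(P,P)\cong\bc[\theta]$ and $\deg\theta=2$. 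This $P$ is a $\bp^{\infty,2}$-object absorbing the node. One then gets $\langle\sho_{X_\star},\shu^\vee_{X_\star}\rangle^\perp=\langle\bar{\scha}_{X_\star},P\rangle$, in the appropriate order, with $\bar{\scha}_{X_\star}=\pi_*\tilde{\scha}\simeq\schb_Y$. So the $\bar{\scha}_{X_\star}$ of the statement is the orthogonal to $P$ inside the Kuznetsov component of $X_\star$, not the Kuznetsov component itself. Constructing the pair $(E_1,E_2)$ and checking that it contains $\sho_{\tilde C}(-1)$ is the real content your Step 2 lacks.
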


The subcategory $\bar{\scha}_{X_{\star}}$ is called the \emph{Kuznetsov component} of the degenerate Gushel--Mukai threefold $X_{\star}$. Using Theorem \ref{11_main-result-1.2} and Remark \ref{11_main-result-1.2-rem}, one can find for a suitable small base $B\subset\MBE(10)$ a family of Enriques categories
$$\cate{D}_p(\schs_B)_{\Stwo}\rightarrow B$$
whose fiber $\cate{D}^b(\schs_b)_{\Stwo}$ at $b\in B$ is either equivalent to the Kuznetsov component of a special Gushel--Mukai threefold or that of a degenerate one.

\begin{remark}
The degenerate threefold $X_{\star}$ above has another small resolution
$$f\colon W\rightarrow X_{\star}$$
such that $W\subset\bp(\sho_{\bp^1}\oplus\sho_{\bp^1}^{\oplus 2}(1)\oplus\sho_{\bp^1}(2))$ is a cubic del Pezzo fibration and the exceptional locus of $f$ is the unique rational curve of degree $4$ in $\bp(\sho_{\bp^1}\oplus\sho_{\bp^1}^{\oplus 2}(1)\oplus\sho_{\bp^1}(2))$ contracted by $-K_W$. A general hyperplane section of $W$ is a K3 surface
$$T\subset\bp(\sho_{\bp^1}^{\oplus 2}(1)\oplus\sho_{\bp^1}(2))$$
admitting a planar cubic elliptic fibration. Under the canonical polarization, one can see that $T$ is in $\NL_{3,0}^{10}$. It would be interesting to check whether $T$ belongs to $\MBE(10)$ and investigate the relation between $\DbS$ and $\cate{D}^b(T)$. The author hopes that it will be helpful to find a direct proof of $\cate{D}^b(S)_{\Stwo}\cong\bar{\scha}_{X_{\star}}$.
\end{remark}

\subsection{Conjugation of involutions}
Here, we will fix a polarized K3 surface $(S,H)$ in $\MBE(10)$ as in Proposition \ref{12_main-result-2.1}~\eqref{12_main-result-2.1(2)} and let $E$ be the elliptic class.

Then one has two special stability conditions
\begin{center}
$\sigma_{5,2}:=\sigma_{\frac{1}{5}H,-\frac{2}{5}H}$ and $\sigma_{2,1}:=\sigma_{\frac{1}{2}D,-\frac{1}{2}D}$
\end{center}  
on $\DbS$ coming from $(S,H)\in\MBE(10)$ and $(S,D)\in\MBE(4)$ respectively.

The involution $\Pi_{5,2}$ generating the $\Stwo$-action on $\DbS$ for $(S,H)\in\MBE(10)$ induces the Hodge isometry $\phi_{5,2}$ on $\MukaiZ{S}$ such that
$$\phi_{5,2}(r,\Delta,s)=(-4r-5s-2\Delta\cdot H,(2r+2s+\Delta\cdot H)H-\Delta,-5r-4s-2\Delta\cdot H)$$
according to \cite[Proposition 6.1]{Liu26a}.

The involution $\Pi_{2,1}$ generating the $\Stwo$-action on $\DbS$ for $(S,D)\in\MBE(4)$ induces the Hodge isometry $\phi_{2,1}$ on $\MukaiZ{S}$ such that
$$\phi_{2,1}(r,\Delta,s)=(-r-2s-\Delta\cdot D,(r+s+\Delta\cdot D)D-\Delta,-2r-s-\Delta\cdot D)$$
according to \cite[Proposition 5.1]{Liu26a}.

\begin{proposition}\label{12_main-result-2.2-lemma}
The object $\shi_x(-E)$ is stable with respect to $\sigma_{5,2}$.
	
\begin{proof}
Otherwise, one can choose a short exact sequence
$$0\rightarrow A\rightarrow\shi_x(-E)\rightarrow B\rightarrow0$$
in $\scha_{\frac{1}{5}H,-\frac{2}{5}H}$ with $\phi(A)\geq\phi(\shi_x(-E))$. The long cohomological exact sequence 
$$0\rightarrow\shh^{-1}(B)\rightarrow\shh^0(A)\rightarrow\shi_x(-E)\rightarrow\shh^0(B)\rightarrow0$$
indicates that $A$ and $B$ are coherent sheaves. Otherwise, one has
$$\mu^+_H(\shh^{-1}(B))\leq -4$$ 
according to the construction of $\sigma_{5,2}$. On the other hand, one notices that $\shh^0(A)$ is torsion-free as $\shh^{-1}(B)$ and $\shi_x(-E)$ are torsion-free. It means that 
$$\mu^-_H(\shh^0(A))> -4$$
and this would violate the fact that $\shh^{-1}(B)\subset\shh^0(A)$.
		
In this case, one has $v(A)=(1,-E-\Delta,-s)$ and $v(B)=(0,\Delta,s)$ for some effective class $\Delta$ on $S$. Then one has $\Delta\cdot H\geq0$ as $\shi_x(-E)$ is slope semistable. On the other hand, one has $\mu_H(A)=\mu_H^+(A)\geq-3$. So $\Delta\cdot H=0$ and then $\Delta=0$.
		
Then $B$ is zero-dimensional and one has $\phi(B)=1>\phi(\shi_x(-E))$, which implies that $\phi(A)<\phi(\shi_x(-E))$, a contradiction.
\end{proof}
\end{proposition}

Now we are able to conclude the main result of this section. 

\begin{theorem}[Theorem \ref{00_main-result-2.2}]\label{12_main-result-2.2}
Consider a K3 surface $S$ in Proposition \ref{12_main-result-2.1}~\eqref{12_main-result-2.1(2)}, then the Enriques category $\DbS_{\Stwo}$ coming from $\MBE(10)$ is equivalent to the Kuznetsov component $\schb_Y$ of the quartic double solid $Y$ over $S$.

\begin{proof}
It suffices to show that the involutions $\Pi_{5,2}$ and $\Pi_{2,1}$ on $\DbS$ are conjugate to each other. According to Proposition \ref{12_main-result-2.2-lemma}, the stability condition $$\sigma':=\funct{T}_{\sho_S(-E)}^{-1}.\sigma_{5,2}=(\schq,W)$$ 
is geometric, so that $\sigma'$ is in $\Stab^{\dag}(S)$. Then one can find a unique $\tilde{g}\in\grp$ such that $\sigma'.\tilde{g}=\sigma_{\omega,\beta}$ for some ample class $\omega$. One computes that 
$$W(r,\Delta,s)=-\left(\frac{1}{5}r+\frac{2}{5}\Delta\cdot D+\frac{3}{5}s\right)+\left(\frac{3}{5}r+\frac{1}{5}\Delta\cdot D-\frac{1}{5}s\right)\sqrt{-1}$$
so a direct inspection shows that $\sigma_{\omega,\beta}=\sigma_{2,1}$. In particular, one has 
$$\Pi'.\sigma_{5,2}=\sigma_{5,2}$$
for the involution $\Pi':=\funct{T}_{\sho_S(-E)}\circ\Pi_{2,1}\circ\funct{T}^{-1}_{\sho_S(-E)}$.

Moreover, one checks that the Hodge isometry induced by
$$\Pi':=\funct{T}_{\sho_S(-E)}\circ\Pi_{2,1}\circ\funct{T}^{-1}_{\sho_S(-E)}$$
is the same as $\phi_{5,2}$. Hence, one concludes $\Pi'\cong\Pi_{5,2}$ due to Theorem \ref{02_K3-Hu-conway}.
\end{proof}

\end{theorem}

\section{Towards Hodge-Special Gushel--Mukai Fourfolds}\label{000_Section-13}

In this section, we will point out the relation between Hodge-special Gushel--Mukai fourfolds and Bridgeland--Enriques general K3 surfaces.

\subsection{Kuznetsov components of GM fourfolds}
Let $V_5$ be a five-dimensional vector space, let $\Gr(2,V_5)$ be the Grassmannian of two-dimensional subspaces of $V_5$, viewed in $\bp(\land^2V_5)\cong\bp^9$ via the Plücker embedding, and let $\CGr(2,V_5)$ in $\bp(\bc\oplus\land^2V_5)\cong\bp^{10}$ be the cone over $\Gr(2, V_5)$ of vertex $\nu:= \bp(\bc)$.

\begin{definition}
A \defi{Gushel--Mukai fourfold} is a smooth $4$-dimensional intersection $X=\CGr(2,V_5)\cap \bp(W)\cap Q$ where $W\subset \bc\oplus\land^2 V_5$ is a nine-dimensional subspace and $Q\subset\bp(W)$ is a quadric hypersurface.
\end{definition}

The vertex $\nu$ of the cone $\CGr(2,V_5)$ does not belong to $X$ as $X$ is smooth. So the linear projection from $\nu$ defines a regular map
$$\gamma_X\colon X\rightarrow\Gr(2,V_5)$$
called the \emph{Gushel map}. The geometry of $\gamma_X$ splits $X$ into two possible types.

Suppose that $\nu\in\bp(W)$, then $\bp(W)$ is a cone over $\bp(W/\bc)$ and $X$ is a double cover of the linear section $\bp(W/\bc)\cap\Gr(2,V_5)$ branched along its intersection with a quartic. In this case, we say that $X$ is \emph{special}.

Suppose that $\nu\notin\bp(W)$, then $X$ is isomorphic to a quadric section of the linear section of $\Gr(2,V_5)$ given by the projection of $\bp(W)$ to $\bp(\land^2 V_5)$. In this case, we say that $X$ is \emph{ordinary} and one sees that $\gamma_X$ is an embedding.

In any case, one has an ample line bundle $\sho_X(1):=\gamma_X^*\sho_{\Gr(2,V_5)}(\sigma_{1,0})$ for the Schubert cycle $\sigma_{1,0}$ and a vector bundle $\shu_X:=\gamma_X^*\shu_{\Gr(2,V_5)}$.

\begin{definition}[\cite{Ku06,KP18}]
Let $X$ be a Gushel--Mukai fourfold, then the subcategory
$$\schc_X:=\{A\in\DbX\,|\,\deriver\Hom(\sho_X(i),A)=\deriver\Hom(\shu^\vee_X(i),A)=0\}\subset\DbX$$
is defined as its \defi{Kuznetsov component}.
\end{definition}

The Kuznetsov component $\schc_X$ of a Gushel--Mukai fourfold $X$ is a 2-Calabi--Yau category according to \cite[Proposition 2.6]{KP18}. Similar to the Mukai lattice for K3 surfaces, one can define a pure weight-two Hodge structure 
$$\MukaiZ{\schc_X}:=\{\kappa\in K_{\topo}(X)\,|\,\chi([\sho_X(i)],\kappa)=\chi([\shu_X^\vee(i)],\kappa)=0\textup{ for }i=0,1\}$$
such that $\Mukai\,\!^{2,0}(\schc_X)=\HH^{3,1}(X)$ and $\Mukai\,\!^{1,1}(\schc_X)=\bigoplus_i\HH^{i,i}(X)$. The polarization on this lattice is given by the pairing $\langle x,y\rangle=-\chi(x,y)$. Similar to the Mukai lattice for K3 surfaces, one has an abstract identification $\MukaiZ{\schc_X}\cong \ExL_8^2(-1)\oplus \HyU^4$.

Moreover, one can always find two classes $\lambda_1,\lambda_2$ in the lattice $\MukaiZ{\schc_X}$ which span a sublattice isometric to $\Ala^2_1$. In fact, one has
$$\ch(\lambda_1)=-2+(h^2-\sigma)-\frac{1}{20}h^4\quad\textup{and}\quad\ch(\lambda_2)=-4+2h-\frac{1}{6}h^3$$
where $\sigma:=\gamma_X^*\sigma_{2,0}\in \HH^4(X,\bz)$ for the Schubert cycle $\sigma_{2,0}$.

\begin{theorem}[Pertusi \cite{Per19}]
One has $\kappa\in \langle\lambda_1,\lambda_2\rangle^{\perp}$ if and only if $\ch(\kappa)$ is in
$$\HH^4(X,\bq)_{00}=\{x\in \HH^4(X,\bq)\,|\,x\cdot h^2=x\cdot\sigma=0 \}.$$
Also, $v:=\ch(-)\sqrt{\td(X)}$ induces a Hodge isometry $\langle\lambda_1,\lambda_2\rangle^{\perp}\stackrel{\sim}{\rightarrow}\HH^4(X,\bz)_{00}(2)$.
\end{theorem}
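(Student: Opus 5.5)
The plan is to compute the Euler pairing on $K_{\topo}(X)$ with Hirzebruch--Riemann--Roch,
$$\chi(\kappa_1,\kappa_2)=\int_X\ch(\kappa_1)^\vee\ch(\kappa_2)\td(X),$$
and to use that all the relevant \emph{special} classes come from $\Gr(2,V_5)$. The objects $\sho_X(i)$, $\shu^\vee_X(i)$ and $\lambda_1,\lambda_2$ have Chern characters in the subring $\gamma_X^*\HH^*(\Gr(2,V_5),\bq)$. The Todd class $\td(X)$ also lies in this subring: the normal bundle sequence of $X\subset\CGr(2,V_5)\cap\bp(W)$ expresses $c(T_X)$ through $h$ and the Chern classes of $\shu_X$. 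In degree $4$ this subring is spanned by $h^2$ and $\sigma$.

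\textbf{Step 1: the ``if'' direction.} Let $\kappa$ be a topological K-theory class with $\ch(\kappa)=x\in\HH^4(X,\bq)_{00}$, purely of degree $4$. For any class $\epsilon$ pulled back from the Grassmannian, the integrand of $\chi(\epsilon,\kappa)$ has degree-$8$ part equal to $x\cdot(\ch(\epsilon)^\vee\td(X))_4$. The second factor is a combination of $h^2$ and $\sigma$, so the integral vanishes. Applied to $\sho_X(i)$ and $\shu^\vee_X(i)$, this shows $\kappa\in\MukaiZ{\schc_X}$; applied to $\lambda_1,\lambda_2$, it shows $\kappa\in\langle\lambda_1,\lambda_2\rangle^\perp$.

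\textbf{Step 2: the converse, by a rank count.} We have $\rank\MukaiZ{\schc_X}=24$, so $\langle\lambda_1,\lambda_2\rangle^\perp$ has rank $22$. On the other side, $b_4(X)=24$ and $h^2,\sigma$ are independent with non-degenerate intersection matrix, so $\HH^4(X,\bq)_{00}$ has dimension $22$. Step 1 gives an injection of the rational span of $\{\kappa:\ch(\kappa)\in\HH^4_{00}\}$ into $\langle\lambda_1,\lambda_2\rangle^\perp_\bq$, and equal dimensions make it an equality. Hence every $\kappa$ in the orthogonal complement has $\ch(\kappa)\in\HH^4(X,\bq)_{00}$. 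For such $\kappa$ we have $c_1(\kappa)=0$, hence $\ch_2(\kappa)=-c_2(\kappa)$ is integral. Also $\sqrt{\td(X)}$ has constant term $1$, so $v(\kappa)=\ch(\kappa)$. Thus $v$ maps the complement into $\HH^4(X,\bz)_{00}$. Compatibility of Hodge structures follows because $\ch$ is a morphism of Hodge structures and $\Mukai\,\!^{2,0}(\schc_X)=\HH^{3,1}(X)$.

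\textbf{Step 3: the isometry.} For $\kappa,\kappa'$ in the complement, $\ch(\kappa)^\vee=\ch(\kappa)$ because the class sits in even degree $4$, and only the degree-$0$ part of $\td(X)$ contributes. So $\langle\kappa,\kappa'\rangle=-\int_X x\cdot x'$. This matches the form on $\HH^4(X,\bz)_{00}$, with the twist and sign in the notation $\HH^4(X,\bz)_{00}(2)$ accounting for the normalisation; I would check this convention against \cite{Per19,KP18}.

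\textbf{Main obstacle: surjectivity onto $\HH^4(X,\bz)_{00}$.} I would argue by discriminants. The lattice $\MukaiZ{\schc_X}$ is unimodular and $\langle\lambda_1,\lambda_2\rangle\cong\Ala_1^2$ is primitive, so its orthogonal complement has discriminant of absolute value $4$. By \cite{DIM15}, the lattice $\HH^4(X,\bz)_{00}$ is the orthogonal complement of $\langle h^2,\sigma\rangle$ in the unimodular $\HH^4(X,\bz)$ and has discriminant group $(\bz/2)^2$. An injective isometry between two lattices of the same rank and the same discriminant has index $1$, so $v$ is onto. Alternatively, one can realise generators directly using $K_{\topo}(X)\cong\HH^{\mathrm{even}}(X,\bz)$, which holds since the cohomology of $X$ is torsion-free. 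The delicate points are the exact sign and scaling conventions and the primitivity of $\langle\lambda_1,\lambda_2\rangle$, which I would verify from the explicit Chern characters of $\lambda_1$ and $\lambda_2$.
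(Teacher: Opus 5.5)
Your argument is correct and is essentially the standard proof behind this result, which the paper only cites from \cite{Per19} (where it is adapted from Addington--Thomas' computation for cubic fourfolds): Hirzebruch--Riemann--Roch with all special classes and $\td(X)$ pulled back from $\Gr(2,V_5)$, a dimension count, integrality via $\ch_2=-c_2$, and surjectivity by comparing the discriminant groups $(\bz/2\bz)^2$ (primitivity of $\Ala_1^2$ is in fact automatic, since it has no proper even overlattice and $\MukaiZ{\schc_X}$ is even). One justification needs tightening: $v(\kappa)=\ch(\kappa)$ holds not merely because $\sqrt{\td(X)}$ has constant term $1$, but because $x\cdot h=0$ in $\HH^6(X,\bq)$ (by Poincar\'e duality with $b_2(X)=1$ and $x\cdot h^2=0$) and $x$ annihilates the degree-four part of $\sqrt{\td(X)}$, which lies in $\langle h^2,\sigma\rangle$.
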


Moreover, one has a canonical $\Stwo$-action on $\schc_X$ due to \cite{KP17}. It is direct to compute that the involution inducing this $\Stwo$-action induces a Hodge isometry $\psi$ on $\MukaiZ{\schc_X}$ such that $\psi(\lambda_i)=\lambda_i$ for $i=1,2$ and $\psi(x)=-x$ for $x\in\HH^4(X,\bq)_{00}$.

\subsection{The first-type Hodge-special GM fourfolds}
Although not necessary, it is sufficient to treat only the ordinary Gushel--Mukai fourfolds. 

\begin{proposition}[{{\cite[Lemma 3.8]{KP18}}} and {{\cite[Corollary 6.5]{KP23}}}]
The Kuznetsov component of a special Gushel--Mukai fourfold is equivalent to the Kuznetsov component of an ordinary Gushel--Mukai fourfold.
\end{proposition}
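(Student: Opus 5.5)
This statement is quoted from \cite[Lemma 3.8]{KP18} and \cite[Corollary 6.5]{KP23}, so the plan is to assemble the argument from the geometry of the Gushel map rather than build anything new. Let $X$ be a special Gushel--Mukai fourfold. By the discussion of the two types above, $X$ is a double cover $X\rightarrow M$ of the linear section $M=\bp(W/\bc)\cap\Gr(2,V_5)$, branched along a smooth quadric section $Y=M\cap Q'$. Here $M$ is a smooth fourfold linear section of $\Gr(2,V_5)$ with $\bp(W/\bc)\cong\bp^8$, and $Q'$ comes from $Q$ after completing the square in the vertex coordinate. The first step is to form the ordinary Gushel--Mukai fourfold $X':=M\cap Q'$ inside $\bp(W/\bc)\subset\bp(\land^2V_5)$, which is smooth of dimension four if $Y$ is. However, $X'$ and $Y$ are the same threefold-dimensional object only if $\dim M=5$. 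So I must recheck dimensions: for a special fourfold, $\bp(W)$ is a cone over $\bp^7$, $M=\Gr(2,V_5)\cap\bp^7$ is a fivefold, and $Y=M\cap Q'$ is the fourfold branch divisor. The candidate ordinary partner is therefore $X'=Y=M\cap Q'$, an ordinary Gushel--Mukai fourfold sharing the Lagrangian data with $X$.

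The second step is to compare Lagrangian data. Following \cite{DK18}, a Gushel--Mukai fourfold is determined by a Lagrangian $A\subset\land^3V_6$ together with a hyperplane $V_5\subset V_6$, and the special and ordinary fourfolds built from the same $(A,V_5)$ are exactly the pair $X$ and $X'$ above. They are period partners with the same double EPW sextic. Smoothness of $X'$ follows because $A$ contains no decomposable vectors, which is the smoothness criterion of Debarre--Kuznetsov applied to $X$.

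The third step is the categorical equivalence $\schc_X\cong\schc_{X'}$, and this is the main obstacle. I would invoke the result of \cite{KP23} that Kuznetsov components of Gushel--Mukai varieties of the same dimension parity with the same Lagrangian $A$ are equivalent, which is the duality conjecture proved there. To do it directly instead, I would follow \cite[Lemma 3.8]{KP18}. Using the double cover $f\colon X\rightarrow M$ together with \cite{KP17}, one identifies $(\schc_X)^{\Stwo}$ with the residual component of $\cate{D}^b(M)$ attached to the branch divisor $X'$. That component is in turn identified with the equivariant category of $\schc_{X'}$ under its own involution. Undoing the equivariance through the dual $\Stwo^\vee$-action then gives the equivalence, which should be compatible with the $\Stwo$-actions.
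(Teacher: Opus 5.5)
\textbf{There is a genuine gap: the variety you construct is an ordinary Gushel--Mukai threefold, not a fourfold.} The error is in your dimension count. Since $\dim W=9$, one has $\bp(W/\bc)\cong\bp^7$, and $M=\Gr(2,V_5)\cap\bp^7$ has dimension $6-2=4$, not $5$. This is forced anyway, since $X$ is a double cover of $M$. Hence the branch divisor $X'=M\cap Q'$ is three-dimensional.

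The Lagrangian data say the same thing. By \cite{DK18}, for a fixed triple $(V_6,V_5,A)$ the ordinary and the special Gushel--Mukai varieties have dimensions $5-\ell$ and $6-\ell$, where $\ell=\dim(A\cap\land^3V_5)$. The special one is exactly the double cover of the Grassmannian hull branched along the ordinary one. So your Step 2 claim, that a single $(A,V_5)$ gives an ordinary and a special fourfold, is false.

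Step 3 then cannot work on either route. The equivalence of \cite{KP23} needs equal dimension parity, and $\dim X'=3$ while $\dim X=4$. The \cite{KP17} route identifies $(\schc_X)_{\Stwo}$ with the Kuznetsov component of the threefold $X'$ itself; undoing the equivariance just returns $\schc_X$, so no ordinary fourfold ever appears. In fact the Kuznetsov component of a Gushel--Mukai threefold has Serre functor equal to a non-trivial involution composed with $[2]$, so it is not $2$-Calabi--Yau and cannot be equivalent to $\schc_X$.

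\textbf{The missing idea is to keep $A$ and move the hyperplane.} Since $X$ is smooth of dimension $4\geq 3$, $A$ contains no decomposable vectors. Then the hyperplanes $V_5'\subset V_6$ with $\dim(A\cap\land^3V_5')\geq1$ form a sextic hypersurface in $\bp(V_6^\vee)$, the dual EPW sextic. Those with $\dim(A\cap\land^3V_5')\geq2$ form only its singular surface, and your $V_5$ lies there because $\ell=2$ for a special fourfold.

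Choose $V_5'$ with $\dim(A\cap\land^3V_5')=1$. The ordinary Gushel--Mukai variety attached to $(V_6,V_5',A)$ is then a fourfold $X'$, smooth by the Debarre--Kuznetsov criterion, with the same Lagrangian $A$. So $X$ and $X'$ are period partners; this existence statement is \cite[Lemma 3.8]{KP18}. Finally, \cite[Corollary 6.5]{KP23}, the duality statement for partners of the same dimension parity, gives $\schc_X\cong\schc_{X'}$.

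This is the argument behind the two citations in the paper. Your instinct to use \cite{KP23} is right, but it must be applied to this period partner, not to the branch divisor.
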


In general, a Gushel--Mukai fourfold $X$ is called \emph{Hodge-special} when $\HH^{2,2}(X,\bz)$ contains a rank-three primitive sublattice $\Lambda$ containing 
$$\gamma_X^*\HH^4(\Gr(2,V_5),\bz)$$ 
for $\gamma_X\colon X\rightarrow\Gr(2,V_5)$. The \emph{discriminant} of a Hodge-special $X$ is defined to be the discriminant $\disc(\Lambda)=2t$, where $t\geq 5$ and $t\equiv 0,1,2\pmod{4}$ due to \cite{DIM15}. 

\begin{definition}
A Hodge-special Gushel--Mukai fourfold $X$ is said to be of \defi{first-type} if its discriminant satisfies $t\equiv1,2\pmod{4}$. Otherwise, it is called a \defi{second-type} Hodge-special Gushel--Mukai fourfold.
\end{definition}

The following statement is a direct consequence of Theorem \ref{02_FL-result}~\eqref{02_FL-result(3)}, Theorem \ref{11_main-result-1}, and the definitions.

\begin{proposition}[Proposition \ref{00_main-result-3.1}]\label{13_main-result-3.1}
Consider a Gushel--Mukai fourfold $X$ such that there exists an equivalence $\schc_X\cong\DbS$ for some Bridgeland--Enriques general K3 surface $S$ that is compatible with the $\Stwo$-actions, then $X$ is a first-type Hodge-special Gushel--Mukai fourfold.
\end{proposition}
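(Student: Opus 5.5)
The argument transports the $\Stwo$-equivariant structure along the equivalence and reads off lattice data. Let $\Phi\colon\schc_X\stackrel{\sim}{\rightarrow}\DbS$ be the compatible equivalence, with $(S,H)$ in $\MBE(2t')$, so $(S,H)$ lies in some $\MBE(2t')_a$ by Theorem \ref{11_main-result-1}.

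\emph{Step 1: transport the involutions.} By a Fourier--Mukai/Orlov-type argument, $\Phi$ induces a Hodge isometry $\Phi^{\coho}\colon\MukaiZ{\schc_X}\stackrel{\sim}{\rightarrow}\MukaiZ{S}$. Compatibility with the $\Stwo$-actions means the involution on $\schc_X$ is intertwined with the involution $\Pi$ on $\DbS$ from Definition \ref{00_main-definition}, up to conjugation. Hence $\Phi^{\coho}\circ\psi=\tilde\tau\circ\Phi^{\coho}$, where $\tilde\tau$ is the Hodge isometry induced by $\Pi$. By Proposition \ref{11_involution-description} and Proposition \ref{11_BE-locus-square-2-vector}, the invariant lattice of $\tilde\tau$ is $\langle\bar v_1,\bar v_2\rangle\cong\Ala_1^2$, and $\tilde\tau$ acts as $-1$ on its orthogonal complement. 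On the other side, $\psi$ fixes $\langle\lambda_1,\lambda_2\rangle$ and acts as $-1$ on $\langle\lambda_1,\lambda_2\rangle^\perp$. So $\Phi^{\coho}$ sends $\langle\lambda_1,\lambda_2\rangle$ onto $\langle\bar v_1,\bar v_2\rangle$ and their orthogonal complements onto each other.

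\emph{Step 2: find the rank-three lattice.} The complement $\langle\bar v_1,\bar v_2\rangle^\perp$ contains the algebraic class $w:=(t',-aH,t'(n-1))$. By the remark after Proposition \ref{11_involution-description}, $w$ has square $-2t'$. It is primitive because its $H$-coefficient $a$ is coprime to $t'$. Set $\kappa:=(\Phi^{\coho})^{-1}(w)$. Then $\kappa$ is an algebraic class in $\langle\lambda_1,\lambda_2\rangle^\perp$, so by Pertusi's theorem $v(\kappa)$ is a class in $\HH^4(X,\bz)_{00}(2)\cap\HH^{2,2}$. Let $\Lambda$ be the saturation of $\gamma_X^*\HH^4(\Gr(2,V_5),\bz)+\bz\,x$, where $x$ is the primitive class of $\HH^4(X,\bz)$ corresponding to $\kappa$. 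This $\Lambda$ is a rank-three primitive algebraic sublattice containing $\gamma_X^*\HH^4(\Gr(2,V_5),\bz)$, so $X$ is Hodge-special.

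\emph{Step 3: compute the discriminant.} The discriminant of $\Lambda$ is $2t$ for some $t\equiv 0,1,2\pmod 4$. It is related to the discriminant of the rank-one lattice $\bz w\subset\langle\lambda_1,\lambda_2\rangle^\perp$ through the Debarre--Iliev--Manivel/Pertusi correspondence (as in \cite{DIM15,Per19}). The expected outcome is $t=t'$, since $\Lambda^\perp\subset\HH^4(X,\bz)$ corresponds to $w^\perp$ and so to the transcendental-type lattice of $S$ built from $H$.

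\emph{Step 4: conclude the type.} By Theorem \ref{02_FL-result}\eqref{02_FL-result(3)}, non-emptiness of $\MBE(2t')$ forces $4\nmid t'$ and every odd prime factor of $t'$ to be $\equiv1\pmod4$. Hence $t'\equiv1,2\pmod4$, because $t'\equiv 3\pmod 4$ would force a prime factor $\equiv 3\pmod 4$. If $t=t'$ then $t\equiv1,2\pmod 4$, so $X$ is of first type.

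The main obstacle is Step 3. One must check that the discriminant of $\Lambda$ really equals $2t'$, rather than differing by a factor arising from the index of $\HH^4(X,\bz)_{00}(2)$ inside its saturation. This is exactly where the first-type/second-type dichotomy of \cite{DIM15,Per19} enters: the divisibility of $\kappa$ in $\langle\lambda_1,\lambda_2\rangle^\perp$ must be $1$ in the first-type case. I would compute this divisibility on the K3 side directly, by showing $\langle w,\MukaiZ{S}\rangle=\bz$; this holds since $\langle w,(0,0,1)\rangle=-t'$ and $\langle w,(0,D,0)\rangle=-a\,D\cdot H$ together generate because $\gcd(a,t')=1$. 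Then I would invoke Pertusi's lattice-theoretic translation to read off $t=t'$ with $t\equiv1,2\pmod4$.
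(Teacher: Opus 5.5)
\textbf{There is a genuine gap in Step 3.} Your overall route is the argument the paper compresses into ``a direct consequence of Theorem~\ref{02_FL-result}~\eqref{02_FL-result(3)}, Theorem~\ref{11_main-result-1}, and the definitions'': transport the involution, take the anti-invariant algebraic class $w=(t',-aH,t'(n-1))$, saturate it with $\gamma_X^*\HH^4(\Gr(2,V_5),\bz)$, read off the discriminant, then apply Fan--Lai. Steps 1, 2 and 4 are fine. Step 3 is the crux, as you say, but your proposed resolution is wrong in two ways.

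First, $\langle w,\MukaiZ{S}\rangle=\bz$ carries no information. Since $\MukaiZ{S}$ is unimodular, every primitive vector has divisibility $1$ there. What controls $\disc(\Lambda)$ is the divisibility of $w$ in the anti-invariant lattice $L':=\langle\bar v_1,\bar v_2\rangle^{\perp}$, and that can only be larger.

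Second, your target value is wrong. One has $L'=\{(r,\Delta,(n-1)r)\,|\,\Delta\cdot H=-2ar\}$ and
$\langle w,(r,\Delta,(n-1)r)\rangle=2r(a^2-t'(n-1))=-2r$.
Taking $r=1$ and $\Delta\cdot H=-2a$ shows that $w$ has divisibility exactly $2$ in $L'$. Suppose instead it had divisibility $1$. Then any element $g+cx$ of the saturation, with $g\in\gamma_X^*\HH^4(\Gr(2,V_5),\bq)$, would satisfy $cx\in(\HH^4(X,\bz)_{00})^\vee$, which forces $c\in\bz$. So $\Lambda$ would equal $\gamma_X^*\HH^4(\Gr(2,V_5),\bz)\oplus\bz x$, giving $\disc(\Lambda)=4\cdot 2t'=8t'$ and hence $t=4t'$. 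That is second type, the opposite of the claim.

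\textbf{The fix.} Use divisibility $2$. Then $x/2$ represents a nonzero element of the discriminant group of $\HH^4(X,\bz)_{00}$. By unimodularity of $\HH^4(X,\bz)$, this element is glued to an element of $\gamma_X^*\HH^4(\Gr(2,V_5),\bz)^\vee/\gamma_X^*\HH^4(\Gr(2,V_5),\bz)\cong(\bz/2)^2$. So $\Lambda$ has index $2$ over the direct sum, and $\disc(\Lambda)=8t'/4=2t'$.

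A cleaner version works on the K3 side:
\begin{itemize}
\item The saturation of $\langle\bar v_1,\bar v_2\rangle\oplus\bz w$ is $\HH^0(S,\bz)\oplus\bz[H]\oplus\HH^4(S,\bz)\cong\HyU\oplus\bz(2t')$. It has discriminant $-2t'$ and index $2$ over the sum, whose discriminant is $-8t'$.
\item $\MukaiZ{S}$ and $\HH^4(X,\bz)$ are both unimodular, so a primitive sublattice has the same absolute discriminant as its orthogonal complement.
\item Pertusi's identification of $\langle\lambda_1,\lambda_2\rangle^\perp$ with $\HH^4(X,\bz)_{00}$ is an isometry up to the sign of the form, so it matches the complements.
\end{itemize}
This gives $\disc(\Lambda)=2t'$, and Step 4 then concludes.

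\textbf{Two minor points.} The ``up to conjugation'' in Step 1 should be absorbed by composing $\Phi$ with the conjugating autoequivalence. Also, Proposition~\ref{11_involution-description} requires $t'\geq 2$. For $t'=1$ the involution has a rank-three invariant lattice, so no compatible equivalence exists and the statement is vacuous there.
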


The full converse of this statement is not clear. Here let us provide a weak converse statement for general first-type Hodge-special Gushel--Mukai fourfolds.

\begin{definition}
A first-type Hodge-special Gushel--Mukai fourfold $X$ is said to be \defi{general} if the free abelian group $ \HH^{2,2}(X,\bz)$ has rank $3$.
\end{definition}

Using the results in \cite{PPZ22,Per19} about K3 surfaces associated with Gushel--Mukai fourfolds, it is not difficult to obtain the following assertion. 

\begin{theorem}[Theorem \ref{00_main-result-3.2}]\label{13_main-result-3.2}
Consider a general first-type Hodge-special Gushel--Mukai fourfold $X$ with discriminant $2t$, then for any non-empty open locus $\MBE(2t)_a$ one can find a K3 surface $S$ with Picard rank one lying in it such that there exists an equivalence $\schc_X\cong\DbS$ compatible with the $\Stwo$-actions.

\begin{proof}
One applies \cite[Theorem 3.6~(1)]{Per19} to find a polarized K3 surface $(S,H)$ with Picard rank one and degree $2t$ and a Hodge isometry
$$\MukaiZ{\schc_X}\supset\Lambda^{\perp}\stackrel{\sim}{\rightarrow} H^{\perp}\subset \HH^2(S,\bz)$$
i.e.~the K3 surface $S$ is Hodge-associated with $X$. In particular, one has a primitive square zero class in $\Mukai\,\!^{1,1}(\schc_X,\bz)$. Then one has $\schc_X\cong\cate{D}^b(S)$ using \cite[Theorem 1.9]{PPZ22} and the categorical Torelli theorem for K3 surfaces. The equivalence is compatible with the $\Stwo$-actions on both sides by Theorem \ref{02_FL-result}. 

The $\Stwo$-action on $\cate{D}^b(S)$ comes from one of the non-empty $\MBE(2t)_a$. One can find an $\Stwo$-equivariant equivalence for the action coming from any other non-empty branch in $\MBE(2t)$ by passing to the Fourier--Mukai partners of $S$. Here one notices that the number of conjugacy classes in Theorem \ref{02_FL-result}~\eqref{02_FL-result(3)} is the same as the number of different Fourier--Mukai partners of $S$ computed in \cite{HLOS03}.
\end{proof}
\end{theorem}

In particular, one can find a stability condition $\sigma^X_{t,a}$ on $\schc_X$ through the equivalence such that $\modulis_{\sigma_{t,a}}(\bar{v}_i)\cong\modulis_{\sigma^X_{t,a}}(\lambda_i)$ or $\modulis_{\sigma_{t,a}}(\bar{v}_i)\cong\modulis_{\sigma^X_{t,a}}(\lambda_{3-i})$.

\begin{proposition}[Proposition \ref{00_main-result-4.1}]\label{13_main-result-4.1}
Consider a very general K3 surface $S$ in some open locus $\MBE(2t)_a$ for $t\geq10$, then the moduli spaces $\modulis_{\sigma_{t,a}}(\bar{v}_1)$ and $\modulis_{\sigma_{t,a}}(\bar{v}_2)$ are $\KK^{[2]}$-type hyperkähler manifolds with degree $2$.
	
\begin{proof}
The assertion about $\modulis_{\sigma_{t,a}}(\bar{v}_1)$ is explained in Example \ref{11_main-result-1-lemma2-example}. Now let us check it for $\modulis_{\sigma_{t,a}}(\bar{v}_2)$ when $\Pic(S)\cong\bz[H]$ following Proposition \ref{11_main-result-1-lemma2}.

Suppose that an element $E$ in $\modulis_{\sigma_{t,a}}(\bar{v}_2)$ is strictly semistable, then one can take a non-trivial Jordan--Hölder factor thereof with Mukai vector $w$.

According to Corollary \ref{11_BE-central-charge}, one must have
$$Z_{t,a}(\bar{v}_2-w)=Z_{t,a}(w)=\frac{1}{2}Z_{t,a}(\bar{v}_2)=-\frac{u}{t}+\frac{v}{t}\sqrt{-1}$$
since $u,v$ are coprime and $\Re Z_{t,a}(w),\Im Z_{t,a}(w)\in\frac{1}{t}\bz$ by construction. It follows that the strictly semistable object $E$ has exactly two distinct Jordan--Hölder factors say $A$ and $B$ with Mukai vector $w$ and $\bar{v}_2-w$ respectively, such that
$$w=\frac{\bar{v}_2+w_0}{2}$$
for some Mukai vector $w_0\in\ker Z_{t,a}\cap\Num(S)=\bz(t,-aH,t(n-1))$ where $n\geq 2$ is the integer satisfying $a^2-t(n-1)=-1$. One can see $w_0^2=-10$ from the argument of Theorem \ref{11_main-result-1-lemma2}. It is possible only when $t=5$.
\end{proof}
\end{proposition}

In this case, we will have two hyperkähler manifolds associated with $\schc_X$ for each equivariant equivalence $\schc_X\cong\cate{D}^b(S)$ with $S$ very general in $\MBE(2t)$.

\subsection{Moduli spaces and EPW sextics}
A Gushel--Mukai fourfold $X$ is canonically associated with two irreducible symplectic varieties due to \cite{DK18}, the double EPW sextic $\tilde{Y}_X$ and double dual EPW sextic $\tilde{Y}_X^\vee$. According to \cite{O'Grady13}, the variety $\tilde{Y}_X$ (or $\tilde{Y}_X^\vee$) always carries a non-symplectic involution and is a $\KK^{[2]}$-type hyperkähler manifold with a degree $2$ polarization when it is smooth.

\begin{theorem}[Theorem \ref{00_main-result-4.2}]\label{13_main-result-4.2}
Consider an equivariant equivalence $\schc_X\cong\DbS$ for some Gushel--Mukai fourfold $X$ and some Picard rank one K3 surface $S$ in $\MBE(2t)_a$ with $t\geq 10$ such that the double EPW varieties $\tilde{Y}_X$ and $\tilde{Y}^\vee_X$ are smooth, then $\modulis_{\sigma_{t,a}}(\bar{v}_i)$ is isomorphic to $\tilde{Y}_X$ or $\tilde{Y}^\vee_X$ for $i=1,2$. 

\begin{proof}
One has a bijective correspondence
$$\{\modulis_{\sigma_{t,a}}(\bar{v}_1),\modulis_{\sigma_{t,a}}(\bar{v}_2)\}\leftrightarrow\{\modulis_{\sigma^X_{t,a}}(\lambda_1),\modulis_{\sigma^X_{t,a}}(\lambda_2)\}$$ 
through the equivalence $\schc_X\cong\cate{D}^b(S)$. According to the argument for \cite[Proposition 5.17]{PPZ22}, one can show that $\modulis_{\sigma^X_{t,a}}(\lambda_1)$ is birational to $\tilde{Y}_X$ or $\tilde{Y}^\vee_X$. Here we will assume that $\modulis_{\sigma^X_{t,a}}(\lambda_1)\dashrightarrow \tilde{Y}_X$, then one has a parallel-transport Hodge isometry
$$\HH^2(\tilde{Y}_X,\bz)\rightarrow\HH^2(\modulis_{\sigma^X_{t,a}}(\lambda_1),\bz)\rightarrow\HH^2(\modulis_{\sigma_{t,a}}(\bar{v}_i),\bz)$$
for $i=1$ or $i=2$. By construction, the isometry will transport the square two ample class $h$ of $\tilde{Y}_X$ to $\theta_{\bar{v}_i}(\bar{v}_{3-i})$ up to a sign, where $\theta_{\bar{v}_i}$ is the Mukai isomorphism
$$\theta_{\bar{v}_i}\colon \bar{v}_i^{\perp}\stackrel{\sim}{\rightarrow}\HH^2(\modulis_{\sigma_{t,a}}(\bar{v}_i),\bz)$$
in \cite[Theorem 3.6]{BM14-MMP}. Then by virtue of the global Torelli theorem for hyperkähler manifolds, it suffices to check that $\theta_{\bar{v}_i}(\bar{v}_{3-i})$ is ample up to a sign as the Hodge isometry cannot send $h$ to an anti-ample class. It follows from the fact that the invariant sublattice in $\HH^2(\modulis_{\sigma_{t,a}}(\bar{v}_i),\bz)$ of the involution on $\modulis_{\sigma_{t,a}}(\bar{v}_i)$ induced by the $\Stwo$-action is $\bz[\theta_{\bar{v}_i}(\bar{v}_{3-i})]$ by \cite[Proposition 3.5]{Ou18} and Proposition \ref{11_BE-locus-square-2-vector}.

It means that one has $\modulis_{\sigma_{t,a}^X}(\lambda_1)\cong \tilde{Y}_X$ or $\modulis_{\sigma_{t,a}^X}(\lambda_1)\cong \tilde{Y}^\vee_X$. Similarly, one can see that $\modulis_{\sigma_{t,a}^X}(\lambda_2)\cong \tilde{Y}_X$ or $\modulis_{\sigma_{t,a}^X}(\lambda_2)\cong \tilde{Y}^\vee_X$. Consequently, one has
$$\{\modulis_{\sigma_{t,a}}(\bar{v}_1),\modulis_{\sigma_{t,a}}(\bar{v}_2)\}=\{\modulis_{\sigma^X_{t,a}}(\lambda_1),\modulis_{\sigma^X_{t,a}}(\lambda_2)\}\subseteq\{\tilde{Y}_X,\tilde{Y}^\vee_X\}$$
and therefore concludes.
\end{proof}
\end{theorem}

\begin{remark}
In particular, one sees that the Hilbert square of some Picard rank one K3 surface can be realized as a double EPW sextic by \cite{Liu26b}. One should also compare it with \cite[Section 5]{De22}.
\end{remark}

\begin{remark}
One can directly check that $-\theta_{\bar{v}_1}(\bar{v}_2)$ is an ample class on $\modulis_{\sigma_{t,a}}(\bar{v}_1)$ and $\theta_{\bar{v}_2}(\bar{v}_1)$ is an ample class on $\modulis_{\sigma_{t,a}}(\bar{v}_2)$ using the conventions in \cite{BM14,BM14-MMP}.
\end{remark}

This theorem provides an explicit K3-theoretic modular description for some special double EPW sextics, which may be helpful for our understanding.

In general, we make the following conjecture.

\begin{conjecture}\label{13_conjecture-1}
Let $S$ be a polarized K3 surface in $\MBE(2t)$ with $t\geq 2$, then the moduli spaces $\modulis_{\sigma_{t,a}}(\bar{v}_1)$ and $\modulis_{\sigma_{t,a}}(\bar{v}_2)$ of semistable objects can be realized as double EPW sextics in the sense of \cite{O'Grady13} for some $a$ satisfying $a^2\equiv-1\pmod{t}$.
\end{conjecture}

\begin{remark}
This conjecture has been verified for K3 surfaces in Proposition \ref{12_main-result-2.1}~\eqref{12_main-result-2.1(1)} in \cite{Liu26b} using \cite{DK24}. Also, the conjecture is partly checked for a general K3 surface in $\MBE(2t)_a$ with $a^2-t=-1$ and $t\geq 10$ by \cite[Corollary 7.6]{DM19}. 
\end{remark}

It is related to the conjecture on the image of the period map for Gushel--Mukai fourfolds, see \cite[Question 9.1]{DIM15}. Moreover, Conjecture \ref{13_conjecture-1} could be seen as evidence for the following stronger conjecture.

\begin{conjecture}
Given a K3 surface $S$ in $\MBE(2t)$ with $t\geq 10$, there exists a first-type Hodge-special Gushel--Mukai fourfold $X$ such that there exists an $\Stwo$-equivariant equivalence $\DbS\cong\schc_X$.
\end{conjecture}

This conjecture, together with Conjecture \ref{13_conjecture-1}, would motivate a better understanding of the period image of double EPW sextics (see \cite[Example 6.3]{DM19}).

{
\appendix
\section{Bridgeland--Enriques realization}\label{000_Section-A}
The author's advisor Paolo Stellari suggests that it would also be interesting to consider the following general notions.

\begin{definition}
Let $S$ be a K3 surface. Then it is said to be \defi{Enriques realizable} if there exists an $\Stwo$-action on $\cate{D}^b(S)$ which is generated by a non-symplectic derived involution $\Pi$; an Enriques realizable polarized K3 surface is said to be \defi{Bridgeland} if the involution $\Pi$ fixes a stability condition in $\Stab^{\dag}(S)$.
\end{definition}

The known Enriques realizable K3 surfaces include Bridgeland--Enriques general K3 surfaces, the K3 double covers over smooth projective surfaces (e.g.~the K3 coverings over Enriques surfaces and the branched double cover over Hirzebruch surfaces \cite{Ha21,Ha24}), and the K3 surfaces in Example \ref{11_t=2-example1} and Example \ref{11_t=2-example2}. 

The second class of known Enriques realizable K3 surfaces is Bridgeland, because the non-symplectic involutions are induced by automorphisms on surfaces. Also, the two extra examples are Bridgeland--Enriques realizable as follows.

\begin{example}
Let $S$ be a very general K3 surface in Example \ref{11_t=2-example1} such that the class $H_1$ is ample. Then one checks directly that
$$\sigma_{c,m}:=\sigma_{cH_1,mH_2-\frac{1}{2}H_2}$$
is a stability condition on $\cate{D}^b(S)$ for any $c>0$ and $m\in \br- \bq$ and the Hodge isometry $\tau_1$ fixes its central charge. By construction, one has
$$\Pi_1(\sho_x)\cong\sho_{\iota_1(x)}$$
for any $x\in S$, where $\iota_1$ is the covering involution of the degree-two morphism $S\rightarrow\bp^2$ defined by $|H_1|$. It implies that $\Pi_1$ fixes the stability condition $\sigma_{c,m}$.
\end{example}

\begin{example}
Let $S$ be a Picard rank two K3 surface in Example \ref{11_t=2-example2}, then 
$$\sigma_q:=\sigma_{\frac{1}{4-2q^2}(H- qE),-\frac{3}{2}H}$$
is a stability condition on $\cate{D}^b(S)$ for any $q\in(0,1)$ and the Hodge isometry $\tau$ fixes its central charge. To show that the involution $\Pi$ fixes the stability condition $\sigma_q$, it suffices to show that $\Pi\sho_x$ is $\sigma_q$-stable with phase $1$ for any $x\in S$.

Suppose that $x$ is not in the exceptional divisor $E$, one computes that
$$0\rightarrow \Pi\sho_x[-1]\rightarrow\HH^0(\shi_x(H))\otimes\sho_S(-H)\rightarrow\shi_x\rightarrow0$$
using the expression of $\Pi$ in Example \ref{11_t=2-example2}. Then $\shg_x:=\Pi\sho_x[-1]$ is a vector bundle with Mukai vector $(2,-3H,9)$. Since $Z_q(\shg_x)=1$, it remains to check that $\shg_x$ is slope stable with respect to the ample class $H-qE$.

Otherwise, one can choose a line bundle $\shl\hookrightarrow\shg_x$ whose slope is no less than the slope of $\shg_x$. Since $\shg_x\subset\sho_S(-H)^{\oplus 3}$, one has a non-trivial factorization $\shl\rightarrow\sho_S(-H)$. It follows that $\shl\cong\sho_S(-H-C)$ for some effective divisor $C$.

Set $C=aH+bE$, then one has $a\geq0$ and $4a+2b\geq0$ as $H$ is nef and $D=H-E$ is ample. Since $0<q<1$, one can check that the slope of $\shl$ is strictly smaller than $\shg_x$ if we assume in addition that $a\geq 1$. Consequently, one must have $a=0$ and then $\shl\cong\sho_S(-H-bE)$ for $b\geq0$. It is also impossible. In fact, one can use the short exact sequence defining $\shg_x$ to show that $\HH^0(\shg_x(H+bE))=0$.

Suppose that $x$ is contained in $E$, one can find two exact sequences
$$0\rightarrow \shh\rightarrow\HH^0(\sho_S(D))\otimes\sho_S\rightarrow\sho_S(D)\rightarrow0$$
$$0\rightarrow \Pi\sho_x[-1]\rightarrow\shh\rightarrow\sho_E(-x)\rightarrow0$$
using the expression of $\Pi$ in Example \ref{11_t=2-example2}. Then $\shg_x:=\Pi\sho_x[-1]$ is a vector bundle with Mukai vector $(2,-3H,9)$. Since $Z_q(\shg_x)=1$, it remains to check that $\shg_x$ is slope stable with respect to the ample class $H-qE$.

As before, any possible destabilizing sheaf of $\shg_x$ has the form $\sho_S(-H-bE)$ for some $b\geq0$. However, one can use the exact sequences to check 
$$\HH^0(\shg_x(H+bE))=\HH^0(\shh(bE))=0$$ 
which leads to a contradiction.	
\end{example}

So it is reasonable to make the following conjecture.

\begin{conjecture}\label{A_conjecture}
An Enriques realizable K3 surface is Bridgeland.
\end{conjecture}

To attack this conjecture, one needs to first exclude the possible $(-2)$-roots in the anti-invariant lattice for some involutive isometries. The classification of the qualified Hodge involutions has already been a difficult question even under the additional assumption that the K3 surface is pure in the following sense.

\begin{definition}
An Enriques realizable K3 surface $(S,\Pi)$ is said to be \defi{pure} if there exists an ample class on $S$ such that the Hodge isometry induced by $\Pi$ preserves the sublattice $\Omega_S:=\HH^0(S,\bz)\oplus\bz[H]\oplus\HH^4(S,\bz)$.
\end{definition}

The Bridgeland--Enriques general K3 surfaces and the K3 branched double covers over smooth projective surfaces are pure. To find exotic examples is subtle, because one needs to check against all ample polarizations on a K3 surface.

\begin{example}
Consider a K3 surface $S$ with Picard lattice 
$$\Pic(S)=\begin{pmatrix}
0&6\\
6&8
\end{pmatrix}$$
Then
\begin{enumerate} 
	\item $S$ is not a double cover over a smooth projective surface;
	\item $S$ is not Bridgeland--Enriques general for any ample class on $S$; 
    \item $S$ is Bridgeland--Enriques realizable for some ample class $H$ on $S$.
\end{enumerate}

\begin{proof}
(1) Since the Picard rank of $S$ is two, it is not the K3 covering of an Enriques surface. Suppose that $S$ is a branched double cover over a rational surface, then the covering involution on $S$ would be non-symplectic. Then it acts as $-\id$ on the discriminant group of $\Pic(S)$. However, a direct computation on $\Pic(S)$ shows that such an action cannot preserve the ample cone.

(2) Here we choose divisor classes $D$ and $E$ on $S$ satisfying
$$D^2=8,\quad D\cdot E=6,\quad E^2=0$$
then a primitive ample class has the form $A=xD+yE$. Set $k=2x+3y$, then $x$ and $y$ are coprime and one can assume without loss of generality that $x,k>0$.

Suppose otherwise that the polarized K3 surface $(S,A)$ is in $\MBE(2t)_a$, then 
$$a^2+1=mt$$
for $t=2kx$ and some integer $m$. According to Theorem \ref{02_FL-result} \eqref{02_FL-result(3)}, the factors $k$ and $x$ are odd and not divisible by $3$. In particular, $x$ and $k$ are also coprime. It further ensures that $t\equiv1\pmod{3}$ as $k\equiv 2x\pmod{3}$.

Similar to Proposition \ref{11_BE-locus-square-2-vector}, one can choose integers $u,v$ such that
$$u^2+v^2=t,\quad 3|v,\quad v\equiv\varepsilon au\pmod{2x},\quad v\equiv-\varepsilon au\pmod{2k}$$
where $\varepsilon =\pm1$ is fixed. Then one checks that
$$p:=\frac{-au-\epsilon v }{2k}\quad\textup{and}\quad q:=\frac{-au+\varepsilon v}{6x}+\frac{au+\epsilon v }{3k}$$
are integers.

In this case, one checks that the divisor class $\Delta:=pD+qE$ will prevent $(S,A)$ from being Bridgeland--Enriques general using Proposition \ref{11_main-result-1-lemma1}.

(3) One checks directly that $H=3D+2E$ is an ample class and we will show that $H$ makes $S$ a pure Bridgeland--Enriques realizable K3 surface. 

Then we take the reflection 
$$\phi\colon \MukaiZ{S}\rightarrow\MukaiZ{S},\quad v\mapsto v+\frac{\langle v,v_0\rangle}{9} v_0$$ 
with respect to the Mukai vector $v_0=(9,H,9)$. One checks that $\phi$ fixes the function $Z_{\omega_0,\beta_0}$ for $\omega_0=H/18\sqrt{2}$ and $\beta_0=H/9$. It is not difficult to check that the pair $(\omega_0,\beta_0)$ provides a stability condition $\sigma_0$ by Theorem \ref{02_K3-Bridgeland}.

Consequently, there exists a non-symplectic involution $\Pi$ on $\cate{D}^b(S)$ whose Hodge isometry is $\phi$ and which fixes $\sigma_0$. One notices that $\phi(1,0,-1)=(1,0,-1)$, so $\Pi$ induces an involution on the moduli space $X=\modulis_{\sigma_0}(1,0,-1)$. One checks using \cite{BM14,BM14-MMP} that $X$ is a $\KK^{[2]}$-type hyperkähler manifold, so $\Pi$ fixes a stable object by \cite{Bea11}. Then $\Pi$ induces a group action on $\cate{D}^b(S)$ according to Proposition \ref{02_Enriques-group-action-cri} and makes the K3 surface $S$ pure Bridgeland--Enriques realizable as it is non-symplectic.
\end{proof}
\end{example}

This complexity is also one of the main obstructions for Bridgeland's conjecture on stability manifolds of K3 surfaces \cite[Conjecture 1.2]{Bri08}. 

On the other hand, one has the following statement which is useful for constructing examples from involutions on hyperkähler manifolds as in \cite{Liu26b}.

\begin{proposition}\label{A_ppst}
Consider a $\KK^{[n]}$-type hyperkähler manifold $X$ which is isomorphic to a moduli space $\modulis_{\sigma}(v)$ for a vector $v$ and a $v$-generic stability condition $\sigma\in\Stab^{\dag}(S)$ on some K3 surface $S$. Let $f\in\Aut(X)$ be an anti-symplectic involution with fixed points and whose action on the discriminant group $A_{\HH^2(X,\bz)}$ is the identity. Then $S$ is a Bridgeland--Enriques realizable K3 surface.

\begin{proof}
One takes an invariant ample class on $X$, which is the image of some vector $w$ in $v^{\perp}\subset\MukaiZ{S}$ under the Mukai isomorphism in Theorem \ref{11_bridgeland-hyperk-K3[n]}. One can see that the function $Z_0\colon\Num(S)\rightarrow\bc$ corresponding to $v-w\bm{i}\in\Num(S)\otimes\bc$ is contained in the space $\mathfrak{P}_0^+(S)$ of central charges in Remark \ref{02_K3-Bridgeland-remark} as \cite[Theorem 12.1]{BM14-MMP} excludes the spherical object obstructions appearing in Theorem \ref{02_K3-Bridgeland}.

Since the isometry $f^*$ on $\HH^2(X,\bz)$ acts on $A_{\HH^2(X,\bz)}$ as the identity, it extends to an involution $\phi$ in $\OO^+(\MukaiZ{S})$ by the standard facts in lattice theory (see for example \cite[Proposition 14.2.6]{HuK3}). The isometry $\phi$ fixes $Z_0$ by construction. So one can find a non-symplectic involution $\Pi$ on $\cate{D}^b(S)$ and some $\sigma_0\in\Stab^{\dag}(S)$ with central charge $Z_0$ such that $\Pi.\sigma_0=\sigma_0$ using Theorem \ref{02_K3-Hu-conway}.

Then we claim that $\sigma_0$ is $v$-generic. By virtue of the Bayer--Macrì wall-crossing criterion in \cite{BM14,BM14-MMP}, the class $\theta_{\sigma_0}(w)$ on $\modulis_{\sigma_0}(v)$ is not in the boundary of the ample cone decomposition as $\theta_{\sigma}(w)$ is ample on $X=\modulis_{\sigma}(v)$. Since $\theta_{\sigma_0}(w)$ is exactly the divisor class $\ell_{\sigma_0}$ in \cite{BM14} by construction, one concludes that $\theta_{\sigma_0}(w)$ is ample and the stability condition $\sigma_0$ is $v$-generic. It also ensures that $X\cong\modulis_{\sigma_0}(v)$.

In this case, the action of $\Pi$ on $\modulis_{\sigma_0}(v)$ induces the involution $f$ we start with. So it fixes at least a stable object. Consequently, it makes $S$ a Bridgeland--Enriques realizable K3 surface by Proposition \ref{02_Enriques-over-K3-cri} and Proposition \ref{02_Enriques-group-action-cri}.
\end{proof}
\end{proposition}

\begin{remark}
The involution $f$ on the $\KK^{[n]}$-type hyperkähler manifold $X$ always has fixed points when $n$ is even by \cite{Bea11}. Suppose that $n$ is odd and $f$ is fixed-point-free, then the quotient $X/\langle f\rangle$ is called an Enriques manifold \cite{OS11}. 
\end{remark}

}

\section*{Acknowledgments}
The author would like to thank his advisors Laura Pertusi and Paolo Stellari for constant support, beneficial conversations, and remarks. The author would also like to thank Yu-Wei Fan and Kuan-Wen Lai for answering his questions.

Part of this work was carried out while the author was visiting the University of Edinburgh. The author wants to thank Arend Bayer for the invitation and the weekly discussions, and thank the institutions for their warm hospitality.

\end{document}